\documentclass[10pt,a4paper]{article}
\usepackage[utf8]{inputenc}
\usepackage{amsmath}
\usepackage{amsthm}
\usepackage{amsfonts}
\usepackage{multirow}
\usepackage[table,xcdraw]{xcolor}
\usepackage{subcaption}
\usepackage{float}
\usepackage{hyperref}
\usepackage{showlabels}
\usepackage{tabularx}
\usepackage{amssymb}
\usepackage{graphicx}
\usepackage[left=1in,right=1in,top=2cm,bottom=2cm]{geometry}
\usepackage[shortlabels]{enumitem}
\usepackage{mathrsfs}  
\usepackage[scr=rsfs]{mathalpha}
\usepackage{mathtools}  
\usepackage{amsmath}
\usepackage{amssymb}
\usepackage{tabulary}
\usepackage{booktabs}
\usepackage{hyperref}
\newcommand{\hlabel}{\phantomsection\label}
\usepackage{longfbox}

\newtheorem{Step}{Step}

\newtheorem{Theorem}{Theorem}[section]
\newtheorem{Proposition}[Theorem]{Proposition}
\newtheorem{Remark}[Theorem]{Remark}
\newtheorem{Lemma}[Theorem]{Lemma}
\newtheorem{Corollary}[Theorem]{Corollary}
\newtheorem{Definition}[Theorem]{Definition}

\newtheorem{Algorithm}{Algorithm}
\usepackage{hyperref}
\hypersetup{colorlinks=true,urlcolor=blue}
\expandafter\let\expandafter\oldproof\csname\string\proof\endcsname
\let\oldendproof\endproof
\renewenvironment{proof}[1][\proofname]{
\oldproof[\ttfamily\scshape \bf #1.]
}{\oldendproof}
\newcommand{\set}[1]{\left\{#1\right\}}
\def\prox{\mbox{\rm Prox}}
\def\ve{\varepsilon}

\def\tilde{\widetilde}

\def\dom{{\rm dom}\,}

\def\ox{\overline{x}}

\def\Bar{\overline}

\def\ra{\rangle}
\def\la{\langle}
\def\ve{\varepsilon}
\def\ox{\bar{x}}

\def\dom{\mbox{\rm dom}\,}

\def\ph{\varphi}

\def\st{\stackrel}

\def\gg{\gamma}

\def \N{{\rm I\!N}}
\def \R{{\rm I\!R}}

\newcommand{\dotproduct}[1]{\left\langle#1\right\rangle}
\newcommand{\brac}[1]{\left(#1\right)}
\newcommand{\sbrac}[1]{\left[#1\right]}
\newcommand{\abs}[1]{\left|#1\right|}
\newcommand{\norm}[1]{\left\|#1\right\|}
\numberwithin{equation}{section}

\title{\bf Inexact Proximal Methods\\ for Weakly Convex Functions}
\author{Pham Duy Khanh\footnote{Department of Mathematics, Ho Chi Minh City University of Education, Ho Chi Minh City, Vietnam. E-mail: pdkhanh182@gmail.com}\quad Boris S. Mordukhovich\footnote{Department of Mathematics, Wayne State University, Detroit, Michigan, USA. E-mail: aa1086@wayne.edu. Research of this author was partly supported by the US National Science Foundation under grants DMS-1808978 and DMS-2204519, by the Australian Research Council under Discovery Project DP-190100555, and by Project~111 of China under grant D21024.}\quad Vo Thanh Phat\footnote{Department of Mathematics, Wayne State University, Detroit, Michigan, USA. E-mail: phatvt@wayne.edu. Research of this author was partly supported by the US National Science Foundation under grants DMS-1808978 and DMS-2204519.} \quad Dat Ba Tran\footnote{Department of Mathematics, Wayne State University, Detroit, Michigan, USA. E-mail: tranbadat@wayne.edu. Research of this author was partly supported by the US National Science Foundation under grant DMS-1808978 and DMS-2204519.}}


\begin{document}
\maketitle

\noindent
{\small{\bf Abstract}. This paper proposes and develops inexact proximal methods for finding stationary points of the sum of a smooth function and a nonsmooth weakly convex one, where an error is present in the calculation of the proximal mapping of the nonsmooth term. A general framework for finding zeros of a continuous mapping  is derived from our previous paper on this subject to establish convergence properties of the inexact proximal point method when the smooth term is vanished and of the inexact proximal gradient method when the smooth term satisfies a descent condition. The inexact proximal point method achieves global convergence with constructive convergence rates when the Moreau envelope of the objective function satisfies the Kurdyka-\L ojasiewicz (KL) property. Meanwhile, when the smooth term is twice continuously differentiable with a Lipschitz continuous gradient and a differentiable approximation of the objective function satisfies the KL property, the inexact proximal gradient method achieves the global convergence of iterates with constructive convergence rates.\\
{\bf Key words}: inexact proximal methods, weakly convex functions, forward-backward envelopes, Kurdyka-\L ojasiewicz property, global convergence, linear convergence rates, proximal points} \\[1ex]
{\bf Mathematics Subject Classification (2020)} 90C30, 90C52, 49M05

\section{Introduction}\label{sec:intro}

This paper is devoted to numerical variational analysis of the following class of optimization problems:
\begin{align}\label{intr: main problem}
{\rm minimize}\quad \varphi(x):=f(x)+g(x)\quad \text{ subject to }\;x\in\R^n,
\end{align}
where $f:\R^n\rightarrow\R$ is a continuously differentiable ($\mathcal{C}^1$-smooth) function, not necessary convex, and where $g:\R^n\rightarrow\Bar\R:=(-\infty,\infty]$ is a proper lower semicontinuous (l.s.c.) function. Since $g$ can be chosen as an indicator function of a closed set, the setting of \eqref{intr: main problem} encompasses various problems problems of constrained optimization, which are studied with numerous applications. This type of problem also appear  in many practical models arising, e.g., in machine learning \cite{pelckmans05,tibshirani96}, compressed sensing \cite{raginsky10}, and image processing \cite{ayers88,ochs14}. The {\em proximal-type methods} of our primal interest here are among the most exploited tools for tackling problem \eqref{intr: main problem} when $g$ is convex. More specifically, Martinet first introduced in \cite{martinet70} the proximal point method to deal with this problem when $f\equiv 0$ while the proximal gradient method is designed in \cite{beck09,nesterov13} to solve \eqref{intr: main problem} when $f$ is $\mathcal{C}^1$-smooth with a Lipschitz continuous gradient. Each iteration of these methods requires {\em exact} calculations for the {\em proximal mapping}
\begin{align}\label{analytic sol}
\prox_{\lambda g}(x):=\mathop{\rm argmin}_{y\in\R^n}\big\{{g(y)+(2\lambda)^{-1}}
\norm{y-x}^2\big\},
\end{align}
where $\lambda>0$ is a proximal parameter chosen in advance. Some choices of $g$ allow us to find an explicit form for the proximal mapping in \eqref{analytic sol}; see e.g., \cite[Section~6.9]{beckbook} and the repository in \cite{depos}. However, in many cases important for practical applications, such a form either doesn't exist, or is very expensive to be determined computationally; see, e.g., \cite{cai10,fadili11} and the discussions therein. For these reasons, some proximal methods that only require {\em inexact} information for the proximal mapping in \eqref{analytic sol} are designed. Let us mention the following major developments in this vein:

$\bullet$ Rockafellar proposes in \cite{rockafellar76} an {\em inexact proximal point method} (IPPM) for minimizing proper, l.s.c., convex functions with justifying its global convergence. A linear convergence rate of the method is achieved when the inverse of the subgradient mapping associated with the objective function is Lipschitz continuous around the origin; see also \cite{bertsekasbook}.

$\bullet$ Schmidt et al. \cite{schmidt16} study an {\em inexact proximal gradient method} (IPGM) for \eqref{intr: main problem} when $f$ is $\mathcal{C}^1$-smooth and convex with a Lipschitz continuous gradient while $g$ is proper, l.s.c., and convex. 
A linear convergence of function values achieved by the average of iterates is established in \cite{schmidt16}. If in addition $g$ is strongly convex, then it is shown in \cite{schmidt16} that the convergence is global.

$\bullet$ Bonettini et al. \cite{bonettini20} design the {\em inexact forward-backward} (iFB) {\em algorithm} for \eqref{intr: main problem} and verify its convergence when $f$ is $\mathcal{C}^1$-smooth (possibly nonconvex) with the Lipschitz continuous gradient, when $g$ is proper, l.s.c., convex, and when $\varphi$ is bounded from below. When the boundedness of the iterative sequence is assumed, stationarity for all accumulation points of the iterative sequence is established. If in addition $f$ is real analytic and $g$ is subanalytic, then the iterative sequence has finite length and thus converges to a stationary point of $\varphi$.\vspace*{0.03in}

The main goal of this paper is to develop new {\em inexact versions} of both {\em proximal point} and {\em proximal gradient} methods in the case where $f$ in \eqref{intr: main problem} is a smooth and generally nonconvex functions, while $g$
is {\em weakly convex}. The latter notion was introduced by Nurminskii \cite{nurminskii73} as a weak counterpart of {\em strongly convex} functions in the sense of Polyak \cite{polyak}. We are not familiar with inexact versions of the proximal point method for weakly convex function, while referring the reader to the recent paper \cite{bohm21}, where an {\em exact} version of the proximal gradient method has been designed for weakly convex functions. \footnote{After completing this paper that extends the scheme and results of inexact reduced first-order methods from smooth optimization in our previous paper \cite{kmt22.1} to nonsmooth problems of  type \eqref{intr: main problem} with weakly convex functions $g$, we got familiar with preprint  \cite{ewa}, which addresses problems this type assuming that $f$ is smooth and convex. The authors of \cite{ewa} develop {\em inexact forward-backward algorithms} of the splitting type that is different of ours being based on some {\em sharpness} ideas.} As shown in \cite{wang10}, the class of weakly convex functions is the {\em largest class} to study the proximal-type methods requiring the {\em single-valuedness} of the proximal mapping. Observe also that, to the best of our knowledge, {\em no convergence rate} for the iterative sequence generated by IPGM his been established when $f$ is {\em nonconvex}. Furthermore, the assumptions on the boundedness from below of $\varphi$ and the boundedness of the iterative sequence as in \cite{bonettini20} seem to be restrictive. 

Having in mind the above discussion, we propose novel {\em inexact proximal methods} to solve \eqref{intr: main problem} when $g$ is a weakly convex function and $f$ is smooth but not necessarily convex. The main ideas for these methods come mostly from a general framework for finding zeros of continuous mappings, which is an extension of the {\em inexact reduced gradient methods} (IRGM) for finding stationary points of $\mathcal{C}^1$-smooth functions introduced in \cite{kmt22.1}. Specifically, such a framework allows us to establish the global convergence with convergence rates of IPPM. The basic convergence properties of our novel version of IPGM for 
\eqref{intr: main problem} are also obtained by employing this general framework together by employing the new notion of {\em weak $\ve$-subdifferential} for weakly convex functions and techniques for dealing with approximate proximal mappings used in \cite{schmidt16}. Our {\em major contributions} can be briefly summarized as follows:

$\bullet$ Finding {\em stationary accumulation points of IPPM} for a weakly convex function. Global convergence results with explicit convergence rates for this method are obtained under the KL property for the Moreau envelope of the objective function.

$\bullet$ Finding {\em stationary accumulation points of IPGM} for structured sums \eqref{intr: main problem} of $\mathcal{C}^1$-smooth functions $f$ satisfying a descent condition and extended-real-valued weakly convex functions $g$. Global convergence results with convergence rates are obtained for this method when the smooth part is twice continuously differentiable ($\mathcal{C}^2$-smooth) with a Lipschitz continuous gradient while an approximation of the objective function satisfies the KL property.\vspace*{0.03in}

Regarding {\em numerical aspects} in comparison with the iFB algorithm in \cite{bonettini20} when $g$ is convex, our new IPGM method exhibits some advantages in errors control. More specifically, iFB uses the decreasing errors $\varepsilon_k=k^{-p}$ with $p>2$ in the approximation procedure of \cite{bonettini20} for the proximal mapping \eqref{analytic sol}. When the number of iteration $k$ is large, the error $\varepsilon_k$ becomes too small, and thus the approximation procedure needs to run longer or even cannot stop due to the round-off error of computers. On the other hand, our method adapts the type of error introduced in \cite{kmt22.1}, which does not need to be decreased after each iteration. In minimization problems for smooth functions, this error is automatically adjusted following the magnitude of the exact gradient as illustrated numerically in \cite[Section~6]{kmt22.1}. By using this type of errors, our IPGM overcomes the numerical disadvantage of iFB mentioned above.  

The rest of the paper is organized as follows. Section~\ref{sec:2} presents some basic definitions and preliminary results used throughout the paper. The main part of our work concerning the design and convergence properties of IPPM and IPGM is given in Section~\ref{sec:5}, which is split into three subsections. This major section also provides a comparison between the algorithm descriptions and theoretical results obtained by IPGM and iFB in \cite{bonettini20}. Numerical illustrations for these methods demonstrating advantages of IPGM in errors control are presented and discussed in the concluding Section~\ref{sec:6}. The concluding Section~\ref{sec:concl} discusses some directions of our future research.\vspace*{-0.1in}

\section{Basic Definitions and Preliminaries}\label{sec:2}

In this section, we recall some basic definitions and notation needed below and present their properties that are used in what follows. All our considerations are given in $\R^n$ with the Euclidean norm $\|\cdot\|$ and inner product $\dotproduct{\cdot,\cdot}$. Denoting $\N:=\{1,2,\ldots\}$ the collections of natural numbers and taking any nonempty subset $I\subset\N$, the symbol $x^k\st{I}{\to}\ox$ means that $x^k\to\ox$ as $k\to\infty$ with $k\in I$. We recall the following relaxation of the Lipschitz continuity of the gradient $\nabla f$ employed in \cite{attouch13,kmt22.1}.

\begin{Definition}[descent condition]\rm A continuously differentiable function $f:\R^n\rightarrow\R$ is said to satisfy the {\em descent condition} with constant $L>0$ if
\begin{align}\label{descent condition}
f(x)\le f(y)+\dotproduct{\nabla f(y),x-y}+\dfrac{L}{2}\norm{x-y}^2\;\text{ for all }\;x,y\in\R^n.
\end{align}
\end{Definition}
\begin{Remark}\rm\label{weakly convex of ldescent} The descent condition \eqref{descent condition} is equivalent to the convexity of $\frac{L}{2}\norm{\cdot}^2-f(\cdot)$ by the first-order characterization of convex functions \cite[Theorem~2.1.2]{nesterovbook}. This property is strictly weaker than the Lipschitz continuity of $\nabla f$ as discussed in \cite[Section~2]{kmt22.1}. It is also shown in \cite[Lemma~2.5]{davis21} that the constant in the descent condition for $f$ can be smaller than the constant of the Lipschitz continuity for $\nabla f$ if $f$ is a Moreau envelope \eqref{Moreau} of a weakly convex function (Definition~\ref{weakly convex def}). Relationships between the two properties  for $\mathcal{C}^2$-smooth functions are given below.
\end{Remark}

\begin{Proposition}\label{prop characterize l descent} Let $f:\R^n\rightarrow\R$ be a $\mathcal{C}^2$-smooth function, and let $L>0$. Then the following assertions are equivalent:
\begin{itemize}
\item[\bf(i)] $\norm{\nabla f(x)-\nabla f(y)}\le L\norm{x-y}$ for all $x,y\in\R^n$.

\item[\bf (ii)] $\abs{f(x)- f(y)-\dotproduct{\nabla f(y),x-y}}\le \dfrac{L}{2}\norm{x-y}^2$ for all $x,y\in\R^n.$

\item[\bf (iii)] $\norm{\nabla^2 f(x)}\le L$ for all $x\in\R^n$.
\end{itemize}
\end{Proposition}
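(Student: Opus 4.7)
The cleanest route is the cycle (iii) $\Rightarrow$ (i) $\Rightarrow$ (ii) $\Rightarrow$ (iii), which uses only the fundamental theorem of calculus along line segments together with a Taylor-expansion argument for the last step. All three implications exploit in an essential way that $f$ is $\mathcal{C}^2$, so $\nabla^2 f$ is continuous and symmetric on $\R^n$.

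\textbf{Step 1: (iii) $\Rightarrow$ (i).} I would fix $x,y\in\R^n$ and write
\[
\nabla f(x)-\nabla f(y)=\int_0^1\nabla^2 f\bigl(y+t(x-y)\bigr)(x-y)\,\mathrm{d}t,
\]
then take norms and apply (iii) inside the integral. The bound $\|\nabla f(x)-\nabla f(y)\|\le L\|x-y\|$ follows immediately.

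\textbf{Step 2: (i) $\Rightarrow$ (ii).} This is the standard descent lemma. I would express
\[
f(x)-f(y)-\dotproduct{\nabla f(y),x-y}=\int_0^1\dotproduct{\nabla f(y+t(x-y))-\nabla f(y),x-y}\,\mathrm{d}t,
\]
bound the integrand by Cauchy--Schwarz and (i), which gives $tL\|x-y\|^2$, and integrate in $t$ to obtain the factor $\frac{L}{2}$.

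\textbf{Step 3: (ii) $\Rightarrow$ (iii).} For fixed $y\in\R^n$ and direction $d\in\R^n$, I would apply (ii) to $x=y+td$ and use the second-order Taylor expansion, valid because $f$ is $\mathcal{C}^2$:
\[
f(y+td)-f(y)-t\dotproduct{\nabla f(y),d}=\tfrac{t^2}{2}\dotproduct{\nabla^2 f(y)d,d}+o(t^2).
\]
Dividing the inequality from (ii) by $t^2/2$ and letting $t\to 0$ yields $|\dotproduct{\nabla^2 f(y)d,d}|\le L\|d\|^2$. Since $\nabla^2 f(y)$ is symmetric, its spectral norm equals $\sup_{\|d\|=1}|\dotproduct{\nabla^2 f(y)d,d}|$, so $\|\nabla^2 f(y)\|\le L$.

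\textbf{Main obstacle.} None of the steps is truly hard, but the only delicate point is Step~3, where one must recognize that for a symmetric matrix the operator norm is controlled by its quadratic form on the unit sphere. Without $\mathcal{C}^2$-smoothness this implication would be subtler (requiring Alexandrov-type arguments), so the $\mathcal{C}^2$ hypothesis is used crucially there.
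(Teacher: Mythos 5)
Your proof is correct: each implication in the cycle (iii) $\Rightarrow$ (i) $\Rightarrow$ (ii) $\Rightarrow$ (iii) is carried out properly, and the only delicate point you flag — that for the symmetric matrix $\nabla^2 f(y)$ the spectral norm equals $\sup_{\|d\|=1}\abs{\dotproduct{\nabla^2 f(y)d,d}}$ — is exactly the right observation and is valid. The paper itself does not write out an argument; it simply defers to standard references (Nesterov's Lemmas~1.2.2 and 1.2.3 and a lecture-notes lemma), and your self-contained cycle via the fundamental theorem of calculus, the descent-lemma computation, and the second-order Taylor expansion is precisely the standard content of those cited results, so nothing is gained or lost relative to the paper beyond your proof being explicit where the paper is not.
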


\begin{proof} The proof can be deduced from  \cite[Lemmas~1.2.2 and 1.2.3]{nesterovbook} and \cite[Lemma~8 of Ch.~5]{sidfordlecture}.
\end{proof}

Next we recall the subdifferential constructions for proper extended-real-valued functions $\varphi:\R^n\rightarrow\Bar\R$ taken from \cite{mordukhovich06,mordukhovich18,rockafellarbook} and used below. The (Fr\'echet) \textit{regular subdifferential} of $\varphi$ at $\ox \in\dom \varphi$ is 
\begin{align}\label{frechet}
\widehat{\partial}\varphi(\bar x):=\Big\{v\in \R^n\;\Big|\;\liminf_{x\rightarrow\bar x}\dfrac{\varphi(x)-\varphi(\bar x)-\dotproduct{v,x-\bar x}}{\norm{x-\bar x}}\ge 0\Big\},
\end{align}
while the (Mordukhovich) \textit{limiting subdifferential} of $\varphi$ at $\bar x\in\dom \varphi$ is defined by 
\begin{align}\label{limiting subdiff}
\partial \varphi(\bar x):=\set{v\in\R^n\;\big|\;\exists\,x^k\rightarrow \ox,\ \varphi(x^k)\rightarrow\varphi(\bar x),\ \widehat{\partial}f(x^k)\ni v^k\rightarrow v}.
\end{align}
When $\varphi$ is of class of $\mathcal{C}^1$-smooth functions, then both subdifferentials in \eqref{frechet} and \eqref{limiting subdiff} reduce to the gradient $\nabla \varphi(\bar x)$. A necessary (but not sufficient) condition for $\bar x\in\R^n$ to be a local minimizer of $\varphi$ is 
\begin{align}\label{gen fermat rule}
0\in \partial\varphi(\bar x).
\end{align}
A point that satisfies \eqref{gen fermat rule} is known in the literature as an {\em M$($ordukhovich$)$-stationary} (or {\em limiting-stationary}) point of $\ph$. Since this paper does not deal with any other stationary concepts, we simply use the {\em stationary} point term in what follows. 

Let $g:\R^n\rightarrow\Bar\R$ be a proper l.s.c.\ function. Given  a parameter value $\lambda >0$, the {\it Moreau envelope} $e_{\lambda}g (\cdot):\R^n\rightarrow\R$ and the {\it proximal mapping} $\textit{\rm Prox}_{\lambda g }(\cdot):\R^n\rightrightarrows\R^n$ are defined, respectively, as
\begin{equation}\label{Moreau}
e_{\lambda}g (x):=\inf_{y \in \R^n}\Phi_{\lambda,x}(y),
\end{equation}
\begin{equation}\label{Prox} 
\prox_{\lambda g }(x):=\mathop{\rm argmin}_{y\in \R^n}\Phi_{\lambda,x}(y),
\end{equation}
where the minimizing function $\Phi_{\lambda,x}:\R^n\rightarrow\Bar\R$ is given by
\begin{align}\label{phi lambda}
\Phi_{\lambda,x}(y):=g (y)+(2\lambda)^{-1}\norm{y-x}^2.
\end{align}
A function $g:\R^n\rightarrow \Bar\R$ is called {\em prox-bounded} if there exists $\lambda>0$ such that $e_\lambda g(x)>-\infty$ for some $x\in\R^n.$ The supremum of all such $\lambda$ is the {\em threshhold $\lambda_g$ of prox-boundedness} for $g$.
When $g$ is a convex function, it is prox-bounded with the threshhold $\lambda_g=\infty$. Moreover, its proximal mapping $\prox_{\lambda g}$ is single-valued and continuous in this  case, while its Moreau envelope $e_\lambda g$ is convex and $\mathcal{C}^1$-smooth for any $\lambda>0$; see, e.g., \cite[Theorem~2.26]{rockafellarbook}. These remarkable properties can be extended for the class of {\em weakly convex} functions introduced by Nurminskii \cite{nurminskii73} in the way  equivalent to the following definition. 

\begin{Definition}\rm\label{weakly convex def}
A proper l.s.c.\ function $g:\R^n\rightarrow \Bar\R$ is called {\em $\varrho$-weakly convex} for some $\varrho\ge 0$ if the quadratically shifted function $g(\cdot)+\dfrac{\varrho}{2}\norm{\cdot}^2$ is convex.
\end{Definition}

It has been well recognized in the literature that the class of weakly convex functions is rather large including any ${\cal C}^2$-smooth function as well as many important settings in nonsmooth  and nonconvex constrained optimization; see, e.g., \cite{bohm21,wang10} and the references therein.

\begin{Remark}\rm\quad\label{remark weakly descent}

{\bf(i)} If $g$ is $\varrho$-weakly convex with $\varrho\ge0$, then it is also $\rho$-weakly convex for all $\rho\ge \varrho.$ As a consequence, any convex function is $\varrho$-weakly convex for any $\varrho\ge 0$.

{\bf(ii)} $g$ is $\mathcal{C}^1$-smooth, it follows from Remark~\ref{weakly convex of ldescent} that its $\varrho$-weak convexity is equivalent to the descent property with constant $\varrho$ of $-g$.

{\bf(iii)} Suppose that $g:\R^n\rightarrow\Bar\R$ is a $\varrho$-weakly convex function and that $\lambda\in(0,\varrho^{-1})$. Then for  any $x\in\R^n$, the minimizing function $\Phi_{\lambda,x}(y)$ in \eqref{phi lambda} is represented by
\begin{align*}
\Phi_{\lambda,x}(y)=g(y)+\dfrac{\varrho}{2}\norm{y-x}^2+\dfrac{\lambda^{-1}-\varrho}{2}\norm{y-x}^2,
\end{align*}
being {\em strongly convex} with constant $\lambda^{-1}-\varrho>0$ since $g(y)+\dfrac{\varrho}{2}\norm{y-x}^2$ is convex. 
 
{\bf(iv)} If $g$ is weakly convex, the regular and limiting subdifferentials agree for all $x\in\R^n$, i.e., $g$ is {\em lower regular} on $\R^n$  in the sense of \cite{mordukhovich06}.
\end{Remark}

The next result taken from \cite[Proposition~3.13]{wang10} shows that the {\em weak convexity} of a function is a {\em necessary and sufficient} condition for the associated {\em proximal mapping} \eqref{analytic sol} to be {\em single-valued}.

\begin{Proposition}\label{prox lipschitz}
Let $g:\R^n\rightarrow\Bar\R$ be proper, l.s.c., and prox-bounded with the threshold $\lambda_g\in (0,\infty]$. Then the following conditions are equivalent:

{\bf(i)} For all $\lambda\in (0,\lambda_g)$, the mapping $\prox_{\lambda g}$ is single-valued on $\R^n$. 

{\bf(ii)} $g$ is $\lambda_g^{-1}$-weakly convex, where we use the convention $\infty^{-1}=0$.\\
In this case, the mapping $\prox_{\lambda g}$ is Lipschitz continuous on $\R^n$  for all $\lambda\in (0,\lambda_g)$.
\end{Proposition}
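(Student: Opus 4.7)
The plan is to dispatch the two implications separately and then extract Lipschitz continuity from the strong-convexity characterization given by Remark~\ref{remark weakly descent}(iii). I expect the direction (i)\,$\Rightarrow$\,(ii) to be the main obstacle; the reverse direction and the Lipschitz estimate fall out cleanly from the strong convexity of $\Phi_{\lambda,x}$.

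For (ii)\,$\Rightarrow$\,(i), I would fix $\lambda\in(0,\lambda_g)$ and invoke Remark~\ref{remark weakly descent}(iii) (with $\varrho=\lambda_g^{-1}$) to conclude that $\Phi_{\lambda,x}(\cdot)$ is strongly convex with modulus $\lambda^{-1}-\lambda_g^{-1}>0$ for every $x\in\R^n$. Since $g$ is proper, l.s.c., and prox-bounded with threshold $\lambda_g$, the function $\Phi_{\lambda,x}$ is proper, l.s.c., coercive, and strictly convex, so a minimizer exists and is unique; this is exactly single-valuedness of $\prox_{\lambda g}(x)$.

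For (i)\,$\Rightarrow$\,(ii), the plan is to reason by contrapositive: if $g$ fails to be $\lambda_g^{-1}$-weakly convex, I want to produce some $\lambda\in(0,\lambda_g)$ and some point $x\in\R^n$ at which $\prox_{\lambda g}(x)$ contains at least two minimizers. The natural route is to exploit the Asplund-type formula $e_\lambda g(x)=\tfrac{1}{2\lambda}\|x\|^2-\sup_y\bigl\{\tfrac{1}{\lambda}\langle x,y\rangle-g(y)-\tfrac{1}{2\lambda}\|y\|^2\bigr\}$ so that $\lambda^{-1}$-weak convexity of $g$ is equivalent to the lower envelope $\tfrac{1}{2\lambda}\|x\|^2-e_\lambda g(x)$ being convex, which in turn is equivalent to $\prox_{\lambda g}$ being the gradient of this convex envelope, hence single-valued. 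Running this equivalence over all $\lambda\in(0,\lambda_g)$, failure of $\lambda_g^{-1}$-weak convexity forces some $\lambda$ in this range for which the envelope's convex conjugate is not differentiable at some point, which in turn forces $\prox_{\lambda g}$ to be multivalued there. This is the delicate part, and it is essentially the content of \cite[Proposition~3.13]{wang10}, so I would cite that result rather than redevelop the conjugacy machinery here.

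Finally, to establish Lipschitz continuity under (ii), I fix $\lambda\in(0,\lambda_g)$, let $\varrho=\lambda_g^{-1}$, and pick any $x_1,x_2\in\R^n$ with $y_i:=\prox_{\lambda g}(x_i)$ for $i=1,2$. Writing the first-order optimality condition for the strongly convex problem \eqref{Prox} yields $\lambda^{-1}(x_i-y_i)\in\partial g(y_i)$. Since $g+\tfrac{\varrho}{2}\|\cdot\|^2$ is convex, its subdifferential $\partial g+\varrho I$ is monotone, so
\begin{align*}
\Big\langle \tfrac{x_1-y_1}{\lambda}+\varrho y_1-\tfrac{x_2-y_2}{\lambda}-\varrho y_2,\;y_1-y_2\Big\rangle\ge 0.
\end{align*}
Rearranging gives $\langle x_1-x_2,y_1-y_2\rangle\ge(1-\lambda\varrho)\|y_1-y_2\|^2$, and Cauchy--Schwarz then produces $\|y_1-y_2\|\le (1-\lambda\varrho)^{-1}\|x_1-x_2\|$, which is the desired Lipschitz bound with constant $(1-\lambda\varrho)^{-1}$.
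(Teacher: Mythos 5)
Your proposal is essentially sound, but it is worth noting that the paper does not prove this proposition at all: it is stated as being taken directly from \cite[Proposition~3.13]{wang10}, so the paper's ``proof'' is just that citation. Your treatment is therefore more self-contained on two of the three claims. The implication (ii)\,$\Rightarrow$\,(i) via Remark~\ref{remark weakly descent}(iii) is correct: for $\lambda\in(0,\lambda_g)$ one has $\lambda^{-1}-\lambda_g^{-1}>0$, so $\Phi_{\lambda,x}$ is proper, l.s.c., and strongly convex, hence has a unique minimizer (strong convexity already forces coercivity, so the prox-boundedness hypothesis is not even needed here). Your Lipschitz argument is also correct and yields the explicit constant $(1-\lambda\varrho)^{-1}$ with $\varrho=\lambda_g^{-1}$, which the paper does not record; the only point to state carefully is which subdifferential appears in the optimality condition: the clean way is to write $0\in\partial\bigl(g+\tfrac{\varrho}{2}\|\cdot\|^2\bigr)(y_i)+\lambda^{-1}(y_i-x_i)-\varrho y_i$, using that the quadratic perturbation is smooth and $g+\tfrac{\varrho}{2}\|\cdot\|^2$ is convex, and then invoke monotonicity of the convex subdifferential of the shifted function; by lower regularity of weakly convex functions (Remark~\ref{remark weakly descent}(iv)) this is the same as your ``$\partial g+\varrho I$ is monotone'' step. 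For the genuinely hard direction (i)\,$\Rightarrow$\,(ii) you sketch the conjugacy route but ultimately defer to \cite[Proposition~3.13]{wang10}, which is exactly what the paper does for the whole statement, so no gap arises there. In short: same ultimate source for the difficult implication, but your version buys direct, elementary proofs of the easy implication and of the Lipschitz continuity, together with a quantitative Lipschitz modulus.
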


Let us now recall the notion of {\em $\varepsilon$-subdifferential} broadly used in convex analysis, optimization, and their numerous applications; see, e.g., \cite{lemarechalbook,nam}.

\begin{Definition}\rm\label{epsilon subdif}
Let $g:\R^n\rightarrow\overline{\R}$ be a proper l.s.c.\ function, and let $\varepsilon\ge 0$. Then the {\em $\varepsilon$-subdifferential} of $g$ at some $\bar x\in \dom g$ is denoted by
\begin{align}\label{e-sub}
{\partial}_\varepsilon g(\bar x):= \set{v\in\mathbb{R}^n\;\big|\;\dotproduct{v,x-\bar x}\le g(x)-g(\bar x)+\varepsilon\;\mbox{ whenever }\; x\in\R^n}.
\end{align}
\end{Definition}

For convex quadratic functions important for proximal-type considerations, the $\ve$-subdifferential is explicitly calculated from the definition; see \cite[Example~1.2.2]{lemarechalbook}.

\begin{Proposition}\label{sub grad quadratic}
Let $A$ be any $n\times n$ symmetric positive-semidefinite matrix, let $b\in\R^n$, and let $g:\R^n\rightarrow\R$ be a convex quadratic function defined by $g(x):=\dfrac{1}{2}\dotproduct{Ax,x}+\dotproduct{b,x}$ for all $x\in\R^n$. Then 
\begin{align*}
\partial_\varepsilon g(x)=\Big\{A(x+e)+b\;\Big|\;\dfrac{1}{2}\dotproduct{Ae,e}\le \varepsilon\Big\}\;\text{ for all }\;x\in\R^n,\;\varepsilon\ge 0.
\end{align*}
\end{Proposition}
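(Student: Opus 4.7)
The plan is to reduce the statement to a clean duality between the linear inequality defining the $\varepsilon$-subdifferential and the quadratic bound on the shift vector $e$, by exploiting the exact second-order expansion of the quadratic $g$. First I would observe that since $\nabla g(x)=Ax+b$, the identity
$$g(y)-g(x)-\dotproduct{Ax+b,y-x}=\tfrac12\dotproduct{A(y-x),y-x}$$
holds for all $x,y\in\R^n$. Consequently, after setting $w:=v-(Ax+b)$ and $z:=y-x$, membership $v\in\partial_\varepsilon g(x)$ is equivalent to the single inequality
$$\dotproduct{w,z}\le\tfrac12\dotproduct{Az,z}+\varepsilon\quad\text{for all }z\in\R^n.$$
The proposition will then follow once I show that this inequality holds if and only if $w=Ae$ for some $e\in\R^n$ with $\tfrac12\dotproduct{Ae,e}\le\varepsilon$.

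For the inclusion $(\supseteq)$, I would use the PSD-weighted AM--GM bound
$$\dotproduct{Ae,z}\le\tfrac12\dotproduct{Ae,e}+\tfrac12\dotproduct{Az,z},$$
which is immediate from expanding $0\le\dotproduct{A(e-z),e-z}$. Substituting $w=Ae$ and applying the hypothesis $\tfrac12\dotproduct{Ae,e}\le\varepsilon$ yields the required inequality, whence $v=A(x+e)+b\in\partial_\varepsilon g(x)$.

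For the reverse inclusion $(\subseteq)$, I would proceed in two short steps. First, argue that $w\in\rge A$: since $A$ is symmetric, $\rge(A)^\perp=\ker A$, so if $w\notin\rge A$ one could choose $z_0\in\ker A$ with $\dotproduct{w,z_0}>0$; taking $z=tz_0$ and letting $t\to\infty$ in the equivalent inequality would force $t\dotproduct{w,z_0}\le\varepsilon$, a contradiction. Hence $w=Ae$ for some $e\in\R^n$. Second, substitute $z=te$ into the inequality to obtain the one-variable estimate
$$\bigl(t-\tfrac{t^2}{2}\bigr)\dotproduct{Ae,e}\le\varepsilon\quad\text{for every }t\in\R,$$
and setting $t=1$ (which maximizes $t-t^2/2$) delivers $\tfrac12\dotproduct{Ae,e}\le\varepsilon$.

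The only mild obstacle is the possibly singular case. When $A$ is not invertible, one must first certify that $w$ lies in $\rge A$ before an $e$ is even available, which is precisely what the kernel--range argument above accomplishes. Moreover, although $e$ is only determined modulo $\ker A$, the quantity $\dotproduct{Ae,e}$ is independent of that ambiguity (as $Ak=0$ for $k\in\ker A$ implies $\dotproduct{A(e+k),e+k}=\dotproduct{Ae,e}$), so the bound and the representation $v=A(x+e)+b$ are both unambiguous.
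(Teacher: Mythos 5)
Your proof is correct and complete. Note, however, that the paper does not actually prove this proposition: it simply quotes the formula from Example~1.2.2 of Hiriart-Urruty and Lemar\'echal's book, so there is no in-paper argument to compare against. What you supply is a self-contained elementary verification, and it is sound: the exact second-order identity $g(y)-g(x)-\dotproduct{Ax+b,y-x}=\tfrac12\dotproduct{A(y-x),y-x}$ reduces membership in $\partial_\varepsilon g(x)$ to the inequality $\dotproduct{w,z}\le\tfrac12\dotproduct{Az,z}+\varepsilon$ for all $z$ with $w=v-(Ax+b)$; the inclusion $\supseteq$ follows from expanding $0\le\dotproduct{A(e-z),e-z}$; and for $\subseteq$ you correctly isolate the only delicate point, namely the possibly singular $A$, using $\rge(A)^{\perp}=\ker A$ to force $w\in\rge A$ before extracting $e$, and then $z=te$ with $t=1$ gives $\tfrac12\dotproduct{Ae,e}\le\varepsilon$. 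The closing observation that $\dotproduct{Ae,e}$ is independent of the choice of $e$ modulo $\ker A$ is not needed for the set equality but is a harmless clarification. This is exactly the kind of direct computation the cited reference performs, so your route is the natural one; its added value here is that it makes the paper's borrowed formula verifiable without consulting the reference, including the degenerate case that the statement's phrasing (merely positive-semidefinite $A$) requires.
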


A direct consequence of Proposition~\ref{sub grad quadratic} is the following formula, which plays an important role in convergence analysis of inexact proximal gradient methods for convex functions conducted in \cite{schmidt16}.

\begin{Corollary}\label{sub grad norm sq}
Given $\bar x\in\R^n$, $\lambda>0$, and $g(x):=(2\lambda)^{-1}\norm{x-\bar x}^2$, we have
\begin{equation*}
\partial_\varepsilon g(x)=\big\{\lambda^{-1}(x-\bar x+e)\;\big|\;\|e\|^2\le 2\lambda \varepsilon\big\}\;\text{ for all }\;x\in\R^n,\;\varepsilon\ge 0.
\end{equation*}
\end{Corollary}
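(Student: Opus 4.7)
The plan is to recognize that the quadratic function $g(x) = (2\lambda)^{-1}\|x-\bar x\|^2$ fits directly into the template of Proposition~\ref{sub grad quadratic}, so the result reduces to an explicit identification of the matrix $A$, the vector $b$, and the elimination of an additive constant.

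First I would expand
\begin{equation*}
g(x) = \frac{1}{2\lambda}\|x\|^2 - \frac{1}{\lambda}\langle \bar x, x\rangle + \frac{1}{2\lambda}\|\bar x\|^2 = \tfrac{1}{2}\langle Ax, x\rangle + \langle b, x\rangle + c,
\end{equation*}
with $A := \lambda^{-1} I_n$ (symmetric positive definite, hence positive semidefinite), $b := -\lambda^{-1}\bar x$, and $c := (2\lambda)^{-1}\|\bar x\|^2$. Since the definition of $\partial_\varepsilon g$ in \eqref{e-sub} involves only the difference $g(x) - g(\bar x)$, an additive constant does not affect the $\varepsilon$-subdifferential; hence $\partial_\varepsilon g(x)$ coincides with the $\varepsilon$-subdifferential of $\tfrac{1}{2}\langle Ax,x\rangle + \langle b,x\rangle$.

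Then I would invoke Proposition~\ref{sub grad quadratic} to obtain
\begin{equation*}
\partial_\varepsilon g(x) = \Big\{A(x+e) + b \;\Big|\; \tfrac{1}{2}\langle Ae, e\rangle \le \varepsilon\Big\}.
\end{equation*}
Substituting $A = \lambda^{-1} I_n$ and $b = -\lambda^{-1}\bar x$ gives $A(x+e) + b = \lambda^{-1}(x - \bar x + e)$, while the constraint simplifies to $\tfrac{1}{2}\lambda^{-1}\|e\|^2 \le \varepsilon$, equivalently $\|e\|^2 \le 2\lambda\varepsilon$. Combining these yields the claimed formula.

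There is no real obstacle here: the corollary is a bookkeeping reduction from a general quadratic to a specific one. The only subtlety worth mentioning is the invariance of $\partial_\varepsilon g$ under addition of constants, which is immediate from \eqref{e-sub}; everything else is algebraic substitution.
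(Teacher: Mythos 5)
Your proposal is correct and follows essentially the same route as the paper, which simply applies Proposition~\ref{sub grad quadratic} with $A:=I/\lambda$ and $b:=-\bar x/\lambda$. Your extra remark that the additive constant $(2\lambda)^{-1}\|\bar x\|^2$ does not affect the $\varepsilon$-subdifferential is a small but valid point of care that the paper leaves implicit.
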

\begin{proof}
Apply Proposition~\ref{sub grad quadratic} with $A:=I/\lambda$ and $b:=-\bar x/\lambda$.
\end{proof}

Observe that the $\ve$-subdifferential \eqref{e-sub} may be {\em empty} for simple weakly convex functions. Indeed, the function $g:\R\rightarrow\R$ given by $g(x):=-x^2$ on $\R$ is obviously $2-$weakly convex while \begin{align*}
\partial_\varepsilon g(0)=\set{v\in\R\;\big|\;vx\le -x^2+\varepsilon\;\mbox{ whenever }\;x\in\R}=\emptyset\;\text{ for all }\;\varepsilon\ge 0.
\end{align*}

For our purposes in this paper, we propose to the following proximal-type modification of $\ve$-subgradients that allows us to deal with $\varrho$-weakly convex functions.

\begin{Definition}\label{approximate sub}
{\rm Let $\varrho$ and $\varepsilon$ be the nonnegative numbers. The {\em weak $\varepsilon$-subdifferential} of a $\varrho$-weakly convex function $g:\R^n\rightarrow\Bar\R$ at $\bar x\in\dom g$ is defined by 
\begin{equation}\label{weak-sub}
\partial_{\varepsilon,\varrho}g(\bar x):=\big\{v\in\mathbb{R}^n\;\big|\;\la v,x-\bar x\ra\le g(x)-g(\bar x)+\dfrac{\varrho}{2}\|x-\bar x\|^2+\varepsilon\;\text{ for all }\;x\in\R^n\big\}.
\end{equation}}
\end{Definition}

\begin{Remark}\label{prop subset}
{\rm It is straightforward to deduce from the definition that:

{\bf(i)} If $\varrho=0,$ i.e., $g$ is a convex function, then 
\begin{align*}
\partial_{\varepsilon,0}g(\bar x)=\partial_\varepsilon g(\bar x).
\end{align*}

{\bf(ii)} We have the equalities
\begin{align*}
&\partial_{\varepsilon,\varrho}g(\bar x)=\bigcap_{\delta> 0}\partial_{\varepsilon,\varrho+\delta}g(\bar x)=\bigcap_{\delta> 0}\partial_{\varepsilon+\delta,\varrho}g(\bar x).
\end{align*}

{\bf(iii)} $0\in \partial_{\varepsilon,\varrho}g(\bar x)$ if and only if $\bar x$ is an 
{\em $\varepsilon$-solution} for minimizing the convex function $\tilde{g}(\cdot):=g(\cdot)+
\dfrac{\varrho}{2}\norm{\cdot-\bar x}^2$. The latter means that
\begin{align*}
\tilde{g}(\bar x)\le\tilde{g}(x)+\varepsilon\;\mbox{ for all }\;x\in\R^n.
\end{align*}}
\end{Remark}

The next proposition represents the $\varepsilon-$weak subdifferential of a $\varrho$-weakly convex function $g$ via the $\varepsilon$-subdifferential of the quadratic shifted function $g(\cdot)+\dfrac{\varrho}{2}\norm{\cdot}^2$ and provides a sum rule for this new type of approximate subdifferentials.

\begin{Proposition}\label{prop 2.2}
Let $\varrho,\rho,\varepsilon$ be nonnegative numbers with $\nu:=\varrho+\rho$, and let $g,h:\R^n\rightarrow\Bar\R$ be $\varrho$-weakly convex and $\rho$-weakly convex, respectively. Then the following assertions hold:

{\bf(i)} For any $\bar x\in\dom g$, we have the equality
\begin{align*}
{\partial}_{\varepsilon,\varrho} g(\bar x)=\partial_{\varepsilon}(g+\dfrac{\varrho}{2}\norm{\cdot}^2)(\bar x)-\varrho\bar x.
\end{align*}

{\bf(ii)} For any $\bar x\in \dom g\cap\dom h$, it holds
\begin{align}\label{sum cup}
\partial_{\varepsilon,\nu}(g+h)(\bar x)\supset\bigcup \big\{\partial_{\varepsilon_1,\varrho}g(\bar x)+\partial_{\varepsilon_2,\rho}h(\bar x)\;\big|\;\varepsilon_i\ge 0,\;\varepsilon_1+\varepsilon_2\le \varepsilon\big\}.
\end{align}
When ${\rm ri}(\dom g)\cap{\rm ri}(\dom h)\ne\emptyset$, the equality in \eqref{sum cup} is satisfied, and as a consequence we get
\begin{align}\label{sum rule eps}
\partial_{\varepsilon,\nu} (g+h)(\bar x)\subset \partial_{\varepsilon,\varrho} g(\bar x)+\partial_{\varepsilon,\rho} h(\bar x).
\end{align}
\end{Proposition}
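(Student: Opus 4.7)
The plan is to reduce everything to the classical theory of $\ve$-subdifferentials of convex functions via the quadratic shift, which is natural given the structure of Definition~\ref{approximate sub}.

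For part~(i), I would argue directly from the defining inequalities. Let $\Tilde g := g + \frac{\varrho}{2}\|\cdot\|^2$, which is convex by Definition~\ref{weakly convex def}. A vector $w$ belongs to $\partial_\ve \Tilde g(\ox)$ iff
\[
\la w,x-\ox\ra\le g(x)-g(\ox)+\tfrac{\varrho}{2}\bigl(\|x\|^2-\|\ox\|^2\bigr)+\ve\quad\text{for all }x\in\R^n.
\]
Using the identity $\|x\|^2-\|\ox\|^2=\|x-\ox\|^2+2\la\ox,x-\ox\ra$, this is equivalent to
\[
\la w-\varrho\ox,x-\ox\ra\le g(x)-g(\ox)+\tfrac{\varrho}{2}\|x-\ox\|^2+\ve\quad\text{for all }x\in\R^n,
\]
i.e., $w-\varrho\ox\in\partial_{\ve,\varrho}g(\ox)$. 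Setting $v:=w-\varrho\ox$ yields the claimed equality.

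For the inclusion~$\supset$ in part~(ii), I would pick $u\in\partial_{\ve_1,\varrho}g(\ox)$ and $w\in\partial_{\ve_2,\rho}h(\ox)$ with $\ve_1+\ve_2\le\ve$ and simply add the two defining inequalities; the constants $\varrho$ and $\rho$ combine into $\nu=\varrho+\rho$ and the errors combine into at most~$\ve$, which is precisely the statement that $u+w\in\partial_{\ve,\nu}(g+h)(\ox)$.

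For the reverse inclusion under the qualification condition ${\rm ri}(\dom g)\cap{\rm ri}(\dom h)\ne\emp$, I would invoke part~(i) twice. First rewrite
\[
\partial_{\ve,\nu}(g+h)(\ox)=\partial_\ve\bigl(\Tilde g+\Tilde h\bigr)(\ox)-\nu\ox,
\]
where $\Tilde g:=g+\frac{\varrho}{2}\|\cdot\|^2$ and $\Tilde h:=h+\frac{\rho}{2}\|\cdot\|^2$ are both convex with $\dom\Tilde g=\dom g$, $\dom\Tilde h=\dom h$. The qualification condition therefore transfers to $\Tilde g,\Tilde h$, and the classical Hiriart-Urruty--Lemar\'echal sum rule for $\ve$-subdifferentials of convex functions (see \cite[Thm.~XI.3.1.1]{lemarechalbook}) gives
\[
\partial_\ve\bigl(\Tilde g+\Tilde h\bigr)(\ox)=\bigcup\bigl\{\partial_{\ve_1}\Tilde g(\ox)+\partial_{\ve_2}\Tilde h(\ox)\;\big|\;\ve_i\ge 0,\ \ve_1+\ve_2\le\ve\bigr\}.
\]
Applying part~(i) to each $\Tilde g$-term and $\Tilde h$-term and noting that the $\varrho\ox$ and $\rho\ox$ shifts combine to $\nu\ox$, which cancels the outer $-\nu\ox$, yields the equality in \eqref{sum cup}. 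The consequence \eqref{sum rule eps} then follows immediately because $\ve_i\le\ve$ implies $\partial_{\ve_i,\varrho}g(\ox)\subset\partial_{\ve,\varrho}g(\ox)$ and likewise for $h$, which is visible directly from \eqref{weak-sub}.

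I do not anticipate any real obstacle: part~(i) is a one-line algebraic identity on the quadratic, the $\supset$ inclusion in (ii) is an addition of inequalities, and the reverse inclusion is a direct application of the classical convex $\ve$-subdifferential sum rule after passing through the shift in part~(i). The only point that needs care is verifying that the shift $\frac{\varrho}{2}\|\cdot\|^2$ does not disturb the qualification condition ${\rm ri}(\dom g)\cap{\rm ri}(\dom h)\ne\emp$, which is trivial since the quadratic is finite everywhere.
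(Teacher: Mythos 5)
Your proposal is correct and follows essentially the same route as the paper: part (i) by the same quadratic-shift algebra, and the equality in (ii) by transferring the qualification condition to the shifted convex functions $g+\frac{\varrho}{2}\norm{\cdot}^2$, $h+\frac{\rho}{2}\norm{\cdot}^2$ and invoking the Hiriart-Urruty--Lemar\'echal sum rule for $\varepsilon$-subdifferentials, then undoing the shift via (i). The only (immaterial) difference is that you prove the inclusion \eqref{sum cup} by directly adding the defining inequalities instead of reading it off the inclusion part of the cited sum rule.
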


\begin{proof}
(i) Since $\varphi(\cdot):=g(\cdot)+\dfrac{\varrho}{2}\norm{\cdot}^2$, is convex, it follows that
\begin{align*}
\partial_{\varepsilon,\varrho} g(\bar x)&= \set{v\in\mathbb{R}^n\;\big|\;\dotproduct{v,x-\bar x}\le g(x)-g(\bar x)+\dfrac{\varrho}{2}\norm{x-\bar x}^2+\varepsilon,\ \forall x\in\R^n}\\
&=\set{v\in\mathbb{R}^n\;\big|\;\dotproduct{v+\varrho \bar x,x-\bar x}\le g(x)+\dfrac{\varrho}{2}\norm{x}^2-g(\bar x)-\dfrac{\varrho}{2}\norm{\bar x}^2+\varepsilon, \ \forall x\in\R^n}\\
&=\set{v\in\mathbb{R}^n\;\big|\;\dotproduct{v+\varrho \bar x,x-\bar x}\le \varphi(x)-\varphi(\bar x)+\varepsilon, \ \forall x\in\R^n}\\
&=\partial_{\varepsilon}\varphi(\bar x)-\varrho\bar x\;\mbox{ whenever }\;\bar x\in\dom g.
\end{align*}
(ii) We deduce from the $\varrho$-weak convexity of $g$ and $\varrho$-weak convexity of $h$ that $g(\cdot)+\dfrac{\varrho}{2}\norm{\cdot}^2$ and $h(\cdot)+\dfrac{\rho}{2}\norm{\cdot}^2$ are convex functions. Using the sum rule for $\varepsilon$-subdifferentials from \cite[Theorem~3.1.1]{lemarechalbook} gives us 
\begin{align}\label{sum qa}
\partial_\varepsilon(g+h+\dfrac{\nu}{2}\norm{\cdot}^2)(\bar x)&=\partial_\varepsilon(g+\dfrac{\varrho}{2}\norm{\cdot}^2+h+\dfrac{\rho}{2}\norm{\cdot}^2)(\bar x)\nonumber\\
 &\supset\bigcup\set{\partial_{\varepsilon_1} (g+\dfrac{\varrho}{2}\norm{\cdot}^2)(\bar x)+\partial_{\varepsilon_2} (h+\dfrac{\rho}{2}\norm{\cdot}^2)(\bar x)\;\big|\;\varepsilon_i\ge 0,\;\varepsilon_1+\varepsilon_2\le \varepsilon},
\end{align}
where the equality holds if ${\rm ri}(\dom (g+\frac{\varrho}{2}\norm{\cdot}^2))\cap{\rm ri}(\dom (h+\frac{\rho}{2}\norm{\cdot}^2))\ne\emptyset$, or equivalently if ${\rm ri}(\dom g) \cap{\rm ri}(\dom h)\ne\emptyset$. Observe by (i) that 
\begin{align*}
\partial_\varepsilon(g+h+\dfrac{\nu}{2}\norm{\cdot}^2)(\bar x)=\partial_\varepsilon(g+h)(\bar x)+\nu \bar x,
\end{align*}
and that for all $\varepsilon_1,\varepsilon_2\ge 0$ with $\varepsilon_1+\varepsilon_2\le \varepsilon$ we have
\begin{align*}
\partial_{\varepsilon_1} (g+\dfrac{\varrho}{2}\norm{\cdot}^2)(\bar x)+\partial_{\varepsilon_2} (h+\dfrac{\rho}{2}\norm{\cdot}^2)(\bar x)&=\partial_{\varepsilon_1,\varrho} g(\bar x)+\varrho\bar x+\partial_{\varepsilon_2,\rho} h(\bar x)+\rho \bar x\\
&=\partial_{\varepsilon_1,\varrho} g(\bar x)+\partial_{\varepsilon_2,\rho} h(\bar x)+\nu \bar x.
\end{align*}
Then the inclusion in \eqref{sum qa} implies that
\begin{align}\label{sum 3.7}
\partial_{\varepsilon,\nu}(g+h)(\bar x)\supset \bigcup\set{\partial_{\varepsilon_1,\varrho} (g)(\bar x)+\partial_{\varepsilon_2,\rho} (h)(\bar x)\;\big|\;\varepsilon_i\ge 0,\;\varepsilon_1+\varepsilon_2\le \varepsilon},
\end{align}
where the equality holds if ${\rm ri}(\dom g)\cap{\rm ri}(\dom h)\ne\emptyset$. When the latter condition is satisfied, we deduce from Remark~\ref{prop subset}(iii) that
\begin{align*}
\partial_{\varepsilon,\nu}(g+h)(\bar x)&=\bigcup\set{\partial_{\varepsilon_1,\varrho} g(\bar x)+\partial_{\varepsilon_2,\rho} h(\bar x)\;\big|\;\varepsilon_i\ge 0,\;\varepsilon_1+\varepsilon_2\le \varepsilon}\\
&\subset\partial_{\varepsilon,\varrho} g(\bar x)+\partial_{\varepsilon,\rho} h(\bar x),
\end{align*}
which completes the proof of the proposition.
\end{proof}

In the remainder of this section, we recall and discuss the 
{\em Kurdyka-\L ojasiewciz} (KL) {\em property} as formulated in \cite{attouch10}, which plays an important in establishing global convergence with convergence rates of many optimization methods; see, e.g., \cite{absil05,attouch13,attouch10} among other references.

\begin{Definition}\rm \label{KL ine}\rm We say that a function $f:\R^n\rightarrow\overline{\R}$ enjoys the \textit{KL property} at $\bar x\in\dom \partial f$ if there exist $\eta\in (0,\infty]$, a neighborhood $U$ of $\bar x$. and a concave continuous function $\psi:[0,\eta)\rightarrow[0,\infty)$ such that:

{\bf(i)} $\psi(0)=0$.

{\bf(ii)} $\psi$ is $\mathcal{C}^1$-smooth on $(0,\eta)$.

{\bf(iii)} $\psi'>0$ on $(0,\eta)$.

{\bf(iv)} For all $x\in U$ with $f(\bar{x})<f(x)<f(\bar{x})+\eta$, we have 
\begin{align}\label{KL 2}
\psi'\big(f(x)-f(\bar{x})\big){\rm dist}(0,\partial f(x))\ge 1.
\end{align}
\end{Definition}

It has been realized that the KL property is satisfied in broad settings. In particular, it holds at every {\em nonstationary point} of $f$; see \cite[Lemma~2.1~and~Remark~3.2(b)]{attouch10}. Furthermore, it is proved in the original paper by \L ojasiewicz \cite{lojasiewicz65} that any analytic function $f:\R^n\rightarrow\R$ satisfies the KL property at every point $\ox$ with $\psi(t)=~Mt^{1-q}$ for some $q\in[0,1)$. This property also holds for continuous 
semialgebraic functions, continuous subanalytic functions, and functions definable in an o-minimal structure  \cite{bolte07lo,bolte07}. The notions of subanalytic sets and sub-analytic functions can be found in \cite{bolte07,pangbook2} while the discussion on semialgebraic functions can be found in \cite{attouch13,benedetti90}. For the reader convenience, we recall some fundamental properties of subanalytic and semialgebraic functions in the next two remarks.

\begin{Remark}\rm\label{remark subana}
Some basic properties of subanalytic functions and subanalytic sets in \cite[p.\ 597]{pangbook2} and \cite[p.\ 1208]{bolte07} are summarized as follows:

{\bf(i)} The class of subanalytic sets is closed under finite unions and intersections.
 
{\bf(ii)} The Cartesian product of subanalytic sets is subanalytic.  

{\bf(iii)} If $f:\R^n\rightarrow\R^p$ and $g:\R^p\rightarrow\R^m$ are subanalytic mappings, with $f$ being continuous, then the composition $g\circ f$ is subanalytic. In particular, the class of continuous subanalytic functions is closed under algebraic operations.

{\bf(iv)} Analytic functions are subanalytic.

{\bf(v)} The inverse image of a subanalytic set under a subanalytic mapping is a subanalytic set.\\[0.5ex]
It follows from  (i), (ii), and (v) that if the functions $f_i:\R^n\rightarrow\R$ for $i=1,\ldots,m$ are subanalytic, then the mapping $F:=(f_1,\ldots,f_m)$ is subanalytic as well.
\end{Remark}

 \begin{Remark}\rm\label{semi algebraic}
 The following properties of semialgebraic functions are useful in the subsequent sections:
 
 {\bf(i)} Finite sums and products of semialgebraic functions are semialgebraic.
 
 {\bf(ii)} Composition of semialgebraic functions or mappings are semialgebraic.
 
 {\bf(iii)} Functions of the type $f(x):=\inf_{y\in S}h(x,y)$, where $h$ and $S$ are semialgebraic, are semialgebraic.
 
{\bf(iv)} Semialgebraic functions are subanalytic.
  
 {\bf(v)} Both regular subdifferential \eqref{frechet} and limiting subdifferential \eqref{limiting subdiff} of semialgebraic functions are 
 semialgebraic set-valued mappings.\\[0.5ex]
The properties listed in (i)-(iii) can be found in \cite{attouch10,benedetti90}. Property (iv) is mentioned in \cite[p.~1208]{bolte07lo} while the proof of (v) follows from \cite[Proposition~3.1]{ioffe09}. As a consequence of (i) and (iii), the Moreau envelope of a proper, l.s.c., semialgebraic function is also semialgebraic, and thus it is subanalytic.
\end{Remark}\vspace*{-0.2in}
 
\section{Inexact Proximal Methods for Weakly Convex Functions}\label{sec:5}

In this main section of the paper, we design and justify appropriate versions of the {\em inexact proximal point method} and the {\em inexact proximal gradient method} for the class of structured optimization problems of type \eqref{intr: main problem} with smooth nonconvex functions $f$ and extended-real-valued {\em weakly convex} functions $g$. The section is split into three subsections. Firstly, we consider a general framework for {\em finding zeros} of a single-valued {\em continuous} mapping $G:\R^n\rightarrow\R^n$ when only {\em inexact} information about $G$ is accessible. This scheme is a nonsmooth extension of our previous developments in \cite[Algorithm~1]{kmt22.1} to find stationary points of $\mathcal{C}^1$-smooth functions.\vspace*{-0.05in}

\subsection{A General Framework for Finding Zeros of Continuous Mappings}

Here we propose the following inexact algorithm to solve the problem $G(x)=0$, where $G\colon\R^n\to\R^n$ is a single-valued continuous mapping. 

\begin{Algorithm}\hlabel{scheme continuous mapping}\rm
\setcounter{Step}{-1}
\begin{Step}[initialization]\rm \label{scheme step 0}
Select a starting point $x^1\in\R^n$, initial radii $\varepsilon_1,r_1> 0$, radius reduction factors $\mu,\theta\in (0,1)$. Set $k:=1.$
\end{Step}
\begin{Step}[inexact mapping]\rm \label{scheme step 1}
Find some $g^k$ such that 
\begin{align}\label{gk G(xk)}
\norm{g^k-G(x^k)}\le \varepsilon_k.
\end{align}
\end{Step}
\begin{Step}[radius update]\rm \label{scheme step 2}
If $\norm{g^k}\le r_k+\varepsilon_k$ then $r_{k+1}:=\mu r_k,$ $\varepsilon_{k+1}:=\theta\varepsilon_k$ and $d^k:=0$. Otherwise, set $r_{k+1}:=r_k,\;\varepsilon_{k+1}:=\varepsilon_k$, and 
\begin{align}\label{dk is projection}
d^k:=-{\rm Proj}(0,\mathbb{B}(g^k,\varepsilon_k))=-\dfrac{\norm{g^k}-\varepsilon_k}{\norm{g^k}}g^k.
\end{align}
\end{Step}
\begin{Step}[stepsize]\rm \label{scheme step 3}
Choose a stepsize $t_k>0.$
\end{Step}
\begin{Step}[iteration update]\rm \label{scheme step 4}
Set $x^{k+1}:=x^k+t_kd^k.$  Increase $k$ by $1$ and go back to Step \ref{scheme step 1}.
\end{Step}
\end{Algorithm}

Following the analysis in \cite[Section~4]{kmt22.1}, we introduce the notion of null iterations of Algorithm \ref{scheme continuous mapping} and provide some fundamental properties related to this notion.

\begin{Definition}\rm\label{nul iter}
The $k^{\rm th}$ iteration of Algorithm \ref{scheme continuous mapping} is called a {\em null iteration} if $x^{k+1}=x^k$. The set of all null iterations is denoted by
$$
\mathcal{N}:=\set{k\in\N\;\big|\;x^{k+1}=x^k}.
$$
\end{Definition}

\begin{Remark}\rm \label{null remark 2} We have the following properties of null iterations for Algorithm \ref{scheme continuous mapping}:

{\bf(i)} $k\in\mathcal{N}$ if and only if either one of the following equivalent conditions holds:
\begin{align}\label{alter condi for null}
d^k=0\Longleftrightarrow r_{k+1}=\mu r_k\Longleftrightarrow\varepsilon_{k+1}=\theta\varepsilon_k\Longleftrightarrow\norm{g^k}\le r_k+\varepsilon_k.
\end{align}

{\bf(ii)} $\varepsilon_k\downarrow 0$ if and only if $r_k\downarrow 0$, which is equivalent to the set $\mathcal{N}$ being infinite.

{\bf(iii)} Considering $k\notin\mathcal{N}$, it follows from $\norm{g^k}> r_k+\varepsilon_k$ and \eqref{dk is projection} in Step \ref{scheme step 2} that 
$$\norm{d^k}=\norm{g^k}-\varepsilon_k> r_k+\varepsilon_k-\varepsilon_k=r_k.$$
As a consequence, if $\varepsilon_k\le r_k,$ we get
$\varepsilon_k\le \norm{d^k},$ and hence
\begin{align}\label{G(xk)<3dk}
\norm{G(x^k)}\le \norm{g^k}+\varepsilon_k= \norm{d^k}+2\varepsilon_k\le 3\norm{d^k}.
\end{align}

{\bf(iv)} If $\mathcal{N}$ is finite, observe that for any natural number $k\ge N:=\max{\mathcal{N}}+1$, the $k^{\rm th}$ iteration is not a null one and then deduce from (iii) that 
\begin{align*}
r_k=r_N,\;\varepsilon_k=\varepsilon_N\quad\text{and}\quad \norm{g^k}>r_k+\varepsilon_k=r_N+\varepsilon_N,\; \norm{d^k}>r_k=r_N.
\end{align*}
This tell us that $\set{g^k}$ and $\set{d^k}$ are bounded away from $0.$
\end{Remark}

The next result reveals relationships between sequences of inexact mappings, directions and radii generated by Algorithm \ref{scheme continuous mapping}. The proofs for (i) and (ii) are similar to \cite[Proposition~4.7]{kmt22.1}, while the last assertion of this proposition is deduced from (i), (ii), the continuity of $G$, and condition \eqref{gk G(xk)}.

\begin{Proposition}\label{goes to 0} Let $\set{g^k},\;\set{d^k},\;\set{\varepsilon_k}$, and $\set{r_k}$ be sequences generated by Algorithm~{\rm\ref{scheme continuous mapping}}. Then the following assertions hold:

{\bf(i)} $\varepsilon_k\downarrow0$ if and only if there is an infinite set $I\subset\N$ such that $g^k\overset{I}{\rightarrow}0$.

{\bf(ii)} For any infinite set $I\subset\N$, we have the equivalence
\begin{align*}
g^k\overset{I}{\rightarrow}0\Longleftrightarrow d^k\overset{I}{\rightarrow}0.
\end{align*}
If there exists some infinite set $I\subset\N$ such that either $g^k\overset{I}{\rightarrow}0$ or $d^k\overset{I}{\rightarrow}0$, one has  $G(x^k)\overset{I}{\rightarrow}0.$
\end{Proposition}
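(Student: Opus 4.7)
The plan is to exploit the structural dichotomy between null and non-null iterations already encapsulated in Remark \ref{null remark 2}, so that the sequences $\{g^k\}$, $\{d^k\}$, $\{\varepsilon_k\}$, $\{r_k\}$ can all be controlled through the common gauge $r_k+\varepsilon_k$.

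For assertion (i), the forward direction is almost immediate from Remark \ref{null remark 2}(ii): if $\varepsilon_k\downarrow 0$, then the set $\mathcal{N}$ of null iterations is infinite and $r_k\downarrow 0$ too, so taking $I:=\mathcal{N}$ gives $\|g^k\|\le r_k+\varepsilon_k\to 0$ along $I$. For the converse I would argue by contraposition: if $\varepsilon_k$ does not decrease to $0$, then $\varepsilon_k$ stabilizes from some index on, which by Remark \ref{null remark 2}(ii) means $\mathcal{N}$ is finite; but then Remark \ref{null remark 2}(iv) says $\{g^k\}$ is eventually bounded away from $0$, contradicting $g^k\overset{I}{\to}0$ along any infinite $I$.

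For the equivalence in (ii) I would use the explicit formula \eqref{dk is projection}. For every index $k$ we have $\|d^k\|\le\|g^k\|$ (zero if $k\in\mathcal{N}$, equal to $\|g^k\|-\varepsilon_k$ otherwise), so $g^k\overset{I}{\to}0$ immediately forces $d^k\overset{I}{\to}0$. For the reverse implication, assume $d^k\overset{I}{\to}0$ along infinite $I$. If $\mathcal{N}$ were finite, then Remark \ref{null remark 2}(iv) would force $\|d^k\|>r_k=r_N>0$ for all large $k\in I$, a contradiction; hence $\mathcal{N}$ must be infinite and, by Remark \ref{null remark 2}(ii), $\varepsilon_k\downarrow 0$ and $r_k\downarrow 0$. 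Splitting $I$ into its null and non-null parts, I would then estimate $\|g^k\|\le r_k+\varepsilon_k$ on $I\cap\mathcal{N}$ and $\|g^k\|=\|d^k\|+\varepsilon_k$ on $I\setminus\mathcal{N}$; in both subsequences the right-hand sides tend to zero, so $g^k\overset{I}{\to}0$.

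The last assertion is then a short triangle-inequality argument: by the equivalence in (ii) it suffices to handle the case $g^k\overset{I}{\to}0$, and part (i) then yields $\varepsilon_k\downarrow 0$, so from \eqref{gk G(xk)} we get
\begin{equation*}
\|G(x^k)\|\le\|g^k\|+\|G(x^k)-g^k\|\le\|g^k\|+\varepsilon_k\overset{I}{\to}0,
\end{equation*}
which gives $G(x^k)\overset{I}{\to}0$. The only delicate point in the whole proof is handling the reverse direction in (ii), since one has to rule out the possibility of $\mathcal{N}$ being finite while $d^k\overset{I}{\to}0$; everything else reduces to direct bookkeeping with the rules defined in Step \ref{scheme step 2} of the algorithm.
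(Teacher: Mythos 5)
Your proposal is correct and follows essentially the same route the paper relies on: the paper defers the proofs of (i) and (ii) to its predecessor \cite[Proposition~4.7]{kmt22.1}, and the intended argument is precisely the null/non-null bookkeeping of Remark~\ref{null remark 2} together with the inexactness bound \eqref{gk G(xk)} that you carry out. Your derivation of the final assertion from (i), (ii), and \eqref{gk G(xk)} via the triangle inequality matches the paper's indicated deduction (it does not even need the continuity of $G$ that the paper mentions), so there is nothing to correct.
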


Now we recall the classical results that describe important properties of accumulation points generated by a sequence satisfying Ostrowski's limiting condition. Asserting (i) of Lemma~\ref{lemma: ostrowski} was first established in \cite[Theorem~28.1]{ostrowski}, while assertion (ii) of this lemma is taken from \cite[Theorems~8.3.9 and~8.3.10]{pangbook2}. 

\begin{Lemma}\label{lemma: ostrowski}
Let $\set{x^k}$ be a sequence satisfying the Ostrowski condition
\begin{align*}
\lim_{k\rightarrow\infty}\|x^{k+1}-x^k\|=0.
\end{align*}
Then the following assertions hold:

{\bf(i)} If the sequence $\set{x^k}$ is bounded, then the set of accumulation points of $\set{x^k}$ is nonempty, compact, and connected in $\R^n$.

{\bf(ii)} If $\set{x^k}$ has an isolated accumulation point, then this sequence converges to it.
\end{Lemma}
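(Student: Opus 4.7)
The plan is to prove (i) by separately establishing non-emptiness, compactness, and connectedness of the accumulation set $A$, and then to prove (ii) by a variant of the connectedness argument tailored to a single isolated point.

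For non-emptiness and compactness, I would first invoke the Bolzano-Weierstrass theorem on the bounded sequence $\{x^k\}$ to obtain $A\neq\emptyset$. Compactness reduces to two routine facts: $A$ is bounded (it sits inside the closure of $\{x^k\}$), and $A$ is always closed as the set of accumulation points of a sequence, so in $\R^n$ this gives compactness. The substantive step is connectedness. Here I would argue by contradiction: suppose $A=A_1\cup A_2$ is a partition into disjoint nonempty (relatively) closed subsets. Both $A_i$ are then compact, hence $d:=\dist(A_1,A_2)>0$. Define the open enlargements $U_i:=\{x\in\R^n\mid \dist(x,A_i)<d/4\}$. A triangle-inequality computation shows $\dist(U_1,U_2)\ge d/2$. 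The complement $T:=\R^n\setminus(U_1\cup U_2)$ is closed, and any accumulation point of $\{x^k\}$ in $T$ would also lie in $A\subset U_1\cup U_2$, a contradiction; so only finitely many iterates can lie in the bounded closed set $T\cap\overline{\{x^k\}}$ (otherwise Bolzano-Weierstrass would produce a forbidden accumulation point). Since both $U_1$ and $U_2$ contain infinitely many iterates (each $A_i$ is nonempty), there must be infinitely many indices $k$ with $x^k\in U_1$ and $x^{k+1}\in U_2$ (or vice versa), forcing $\|x^{k+1}-x^k\|\ge d/2$ infinitely often and contradicting the Ostrowski condition.

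For (ii), let $\ox$ be an isolated accumulation point, so there is $r>0$ with no other accumulation point of $\{x^k\}$ in $\overline{B(\ox,r)}$. I will rule out the case $x^k\not\to\ox$ by contradiction. If convergence fails, infinitely many $x^k$ lie outside $B(\ox,r/2)$, while infinitely many lie in $B(\ox,r/4)$ since $\ox\in A$. The closed shell $S:=\{x\mid r/4\le \|x-\ox\|\le r/2\}$ is compact and contains no accumulation point (such a point would lie in $B(\ox,r)$ and differ from $\ox$); hence $S$ contains only finitely many iterates. For large $k$ every $x^k$ therefore lies in $B(\ox,r/4)$ or in $\R^n\setminus B(\ox,r/2)$, and since both regions are entered infinitely often there must be infinitely many indices $k$ with $x^k$ and $x^{k+1}$ in different regions. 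Any such jump satisfies $\|x^{k+1}-x^k\|\ge r/4$, contradicting the Ostrowski condition, so $x^k\to\ox$.

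The main obstacle is the connectedness argument in (i); once the open enlargement $U_i$ and the positive lower bound on $\dist(U_1,U_2)$ are set up, the remaining work is a careful combinatorial use of the Ostrowski hypothesis plus Bolzano-Weierstrass. Part (ii) is then essentially the same argument with $A_1=\{\ox\}$ replaced by a small ball around $\ox$ and $A_2$ replaced by the set of iterates at distance at least $r/2$ from $\ox$. Since both parts are classical, I would cite \cite{ostrowski} for (i) and \cite{pangbook2} for (ii) rather than reproducing all details in the final manuscript.
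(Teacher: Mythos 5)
Your proposal is correct, and it is the classical argument: the paper itself offers no written proof of this lemma, deferring entirely to \cite[Theorem~28.1]{ostrowski} for (i) and \cite[Theorems~8.3.9 and 8.3.10]{pangbook2} for (ii), so your write-up simply supplies the standard reasoning behind those citations (separation of the compact pieces $A_1,A_2$ of the accumulation set, finiteness of visits to the buffer region, and infinitely many jumps of length bounded below contradicting $\|x^{k+1}-x^k\|\to 0$). The only step worth making explicit in (ii) is the claim that failure of convergence forces infinitely many iterates outside $B(\ox,r/2)$: this is not immediate from the definition, but follows in one line because otherwise the tail of the sequence would be bounded with $\ox$ as its unique accumulation point, hence convergent to $\ox$ (equivalently, one may shrink $r$ so that $r/2$ does not exceed the $\varepsilon$ witnessing non-convergence, which is legitimate since isolatedness persists under shrinking). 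With that sentence added, the argument is complete and correct.
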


The next theorem, which is a nonsmooth extension of \cite[Theorem~5.3]{kmt22.1}, reveals major convergence properties of the sequence $\set{x^k}$ generated by 
Algorithm~\ref{scheme continuous mapping}.

\begin{Theorem}\label{corollary 2.6}
Let $\set{x^k}$ be the sequence of iterates generated by Algorithm~{\rm\ref{scheme continuous mapping}}. Assume that $\set{t_k}$ is bounded from above and $\sum_{k=1}^\infty \norm{d^k}^2<\infty$.
Then the following assertions hold:

{\bf(i)} $\varepsilon_k\downarrow0$ and $r_k\downarrow0$ as $k\rightarrow\infty.$
\item Every accumulation point of $\set{x^k}$ is a zero of $G$, and we have $\displaystyle{\sum_{k=1}^\infty}\norm{x^{k+1}-x^{k}}^2<\infty$.

{\bf(ii)} If the sequence $\set{x^k}$ is bounded, then the set of accumulation points of $\set{x^k}$ is nonempty, compact, and connected.

{\bf(iii)} If $\set{x^k}$ has an isolated accumulation point, then the entire sequence converges to this point.
\end{Theorem}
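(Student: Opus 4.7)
The plan is to work through the listed conclusions in order, leveraging the machinery already prepared in Remark~\ref{null remark 2} and Proposition~\ref{goes to 0}, and closing with Lemma~\ref{lemma: ostrowski}. The backbone of the argument is to first show that the null-iteration set $\mathcal{N}$ is infinite, which drives $\varepsilon_k,r_k\downarrow 0$, and then to leverage $\|d^k\|\to 0$ through Proposition~\ref{goes to 0}(ii) to obtain $G(x^k)\to 0$, so that continuity of $G$ handles every accumulation point uniformly.

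I would begin by proving that $\mathcal{N}$ is infinite by contradiction. If $\mathcal{N}$ were finite, Remark~\ref{null remark 2}(iv) would supply an index $N$ and a radius $r_N>0$ such that $\|d^k\|>r_N$ for all $k\ge N$, immediately contradicting $\sum_k\|d^k\|^2<\infty$. Hence $\mathcal{N}$ is infinite and Remark~\ref{null remark 2}(ii) yields $\varepsilon_k\downarrow 0$ and $r_k\downarrow 0$, giving the first half of (i). Next, square-summability forces $d^k\to 0$ on the full index set, so Proposition~\ref{goes to 0}(ii) gives $G(x^k)\to 0$; continuity of $G$ then forces $G(\bar x)=0$ for every accumulation point $\bar x$ of $\{x^k\}$. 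The bound $\sum_k\|x^{k+1}-x^k\|^2<\infty$ follows at once from $x^{k+1}-x^k=t_kd^k$ together with the uniform upper bound on $\{t_k\}$.

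For (ii) and (iii), square-summability of the increments entails $\|x^{k+1}-x^k\|\to 0$, i.e., the Ostrowski condition. Lemma~\ref{lemma: ostrowski}(i), combined with boundedness of $\{x^k\}$, then gives nonemptiness, compactness, and connectedness of the accumulation set, while Lemma~\ref{lemma: ostrowski}(ii) produces convergence of the entire sequence whenever an isolated accumulation point exists. The only delicate point is the contradiction in the opening step: one must correctly combine the equivalence~\eqref{alter condi for null} with the uniform lower bound $\|d^k\|>r_k$ on non-null iterations from Remark~\ref{null remark 2}(iii)--(iv) to force a positive tail for $\{\|d^k\|\}$. Everything else reduces to bookkeeping with the tools already at hand.
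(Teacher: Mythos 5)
Your proposal is correct and follows essentially the same route as the paper: square-summability forces $d^k\to 0$, the prepared machinery of Remark~\ref{null remark 2} and Proposition~\ref{goes to 0} yields $\varepsilon_k,r_k\downarrow 0$ and $G(x^k)\to 0$, the bound $\sum_k t_k^2\|d^k\|^2\le M^2\sum_k\|d^k\|^2$ gives the Ostrowski condition, and Lemma~\ref{lemma: ostrowski} finishes (ii) and (iii). The only cosmetic difference is that you reach assertion (i) by a contradiction argument showing $\mathcal{N}$ is infinite via Remark~\ref{null remark 2}(iv), whereas the paper goes directly through Proposition~\ref{goes to 0}(ii) and (i); both are valid instances of the same machinery.
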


\begin{proof} Since $\sum_{k=1}^\infty \norm{d^k}^2<\infty$, we have that $d^k\rightarrow0$ as $k\to\infty$. It follows from Proposition~\ref{goes to 0} and Remark~\ref{null remark 2}(ii) that $\varepsilon_k\downarrow0$ and $r_k\downarrow0$, which in turn justifies (i) and the convergence $G(x^k)\rightarrow 0$ as $k\to\infty$. This implies that every accumulation point of $\set{x^k}$ is a zero of $G$ by taking into account the continuity of $G$. Denoting by $M$ an upper bound of $\set{t_k}$ gives us
\begin{align*}
\sum_{k=1}^\infty \norm{x^{k+1}-x^k}^2=\sum_{k=1}^\infty t_k^2\norm{d^k}^2\le M^2\sum_{k=1}^\infty\norm{d^k}^2<\infty,
\end{align*}
which implies (ii). The latter yields $\norm{x^{k+1}-x^k}\rightarrow0$ as $k\rightarrow\infty$, i.e. $\set{x^k}$ satisfies the Ostrowski condition. Then (iii) follows from Lemma~\ref{lemma: ostrowski}.
\end{proof}

Now we are going to deal with {\em non-null} iterations, which are useful to establish a global convergence of Algorithm~\ref{scheme continuous mapping} with convergence rates. If $\N\setminus\mathcal{N}$ is finite, the the iterative sequence $\set{x^k}$ 
converges finitely. Thus we consider the case where $\N\setminus\mathcal{N}$ is infinite and get the following relationships between $\set{x^k}$ and $\set{x^k}_{k\in\N\setminus\mathcal{N}}$ that are verified similarly to the proof of \cite[Proposition~4.6(iv) and (5.25)]{kmt22.1}.

\begin{Proposition}\label{0 prop non null sequence}
Let $\set{x^k}$ be the sequence generated by Algorithm~\ref{scheme continuous mapping}. Assume further that the set $\N\setminus\mathcal{N}$ is infinite and renumerate it as $\set{j_1,j_2,\ldots}$. Then defining the sequence $\set{z^k}$ by 
\begin{align}\label{0 zk xjk}
z^k:=x^{j_k}\quad \text{ for all }\;k\in\N,
\end{align}
we have that $z^{k+1}=x^{j_k+1}$, $z^k\ne z^{k+1}$ for all $k\in\N$, and that
the set of accumulation points of $\set{x^k}$ is the same as of $\set{z^k}$. Furthermore, $x^k\rightarrow\bar x$ for some $\bar x\in\R^n$ if and only if $z^k\rightarrow\bar x$ as $k\to\infty$.
\end{Proposition}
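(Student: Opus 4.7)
The plan is to exploit the structural observation that, by Remark~\ref{null remark 2}(i), membership $k \in \mathcal{N}$ is equivalent to $d^k = 0$, which in turn forces $x^{k+1} = x^k$ through Step~\ref{scheme step 4}. First I would observe that between two consecutive non-null indices $j_m$ and $j_{m+1}$, every intermediate index $j_m+1, j_m+2, \ldots, j_{m+1}-1$ belongs to $\mathcal{N}$. Applying $x^{k+1} = x^k$ along this chain yields
$$x^{j_m+1} = x^{j_m+2} = \cdots = x^{j_{m+1}},$$
and since the right-hand side equals $z^{m+1}$ by definition, I obtain $z^{k+1} = x^{j_k+1}$. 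The strict inequality $z^k \neq z^{k+1}$ then follows immediately from the non-nullness of iteration $j_k$: by Remark~\ref{null remark 2}(iii) we have $\|d^{j_k}\| > 0$, so $x^{j_k+1} \neq x^{j_k}$.

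For the coincidence of accumulation sets, the same stuttering description applies. For each $n \in \N$ with $n \ge j_1$, there is a unique $m$ with $j_m \le n \le j_{m+1}$, and then $x^n \in \{z^m, z^{m+1}\}$ by the chain above. For the initial indices $n < j_1$ (if any), every iteration $1, 2, \ldots, j_1-1$ is null by the minimality of $j_1$, so $x^n = x^1 = x^{j_1} = z^1$. Consequently $\{x^k\}$ is obtained from $\{z^k\}$ by repeating each term a finite number of times. Any accumulation point of $\{z^k\}$ is automatically one of $\{x^k\}$ since $\{z^k\}$ is a subsequence of $\{x^k\}$; conversely, any accumulation point $\bar x$ of $\{x^k\}$ corresponds, via the index map $n \mapsto z^{m(n)}$, to a subsequence of $\{z^k\}$ with the same limit.

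Finally, the convergence equivalence follows from the same bookkeeping. The forward implication is trivial, as $\{z^k\}$ is a subsequence of $\{x^k\}$. For the converse, given $z^k \to \bar x$ and $\varepsilon > 0$, choose $K$ so that $\|z^m - \bar x\| < \varepsilon$ for all $m \ge K$; the stuttering description ensures $x^n \in \{z^m : m \ge K\}$ whenever $n \ge j_K$, and therefore $\|x^n - \bar x\| < \varepsilon$. The proof is essentially pure bookkeeping and presents no real obstacle; the only subtle point is the correct handling of the initial segment $n < j_1$, which is resolved by the minimality of $j_1$.
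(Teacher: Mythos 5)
Your proof is correct, and it is exactly the intended argument: the paper itself omits the details, deferring to the analogous bookkeeping in the cited prior work, and the core observation in both cases is that the iterates are constant across consecutive null iterations, so $\set{x^k}$ is just $\set{z^k}$ with each term repeated finitely many times. Your handling of the initial segment $n<j_1$ and of the non-repetition $z^k\ne z^{k+1}$ (via $d^{j_k}\ne 0$ and $t_{j_k}>0$) is sound, so nothing is missing.
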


The next two propositions, taken from \cite[Theorems~5.3, 5.6, 5.9]{kmt22.1}, reveals convergence properties with convergence rates of Algorithm~\ref{scheme continuous mapping} when $G=\nabla f$, where $f:\R^n\rightarrow\R$ is a $\mathcal{C}^1$-smooth function satisfying the descent condition from Definition~\ref{descent condition}.

\begin{Proposition}\label{convergence IRG constant}
Let $\set{x^k}$ be the sequence generated by Algorithm~\ref{scheme continuous mapping} with $G=\nabla f$, where $f$ a $\mathcal{C}^1$-smooth function satisfying the descent condition for some constant $L>0$. Assume that there exist $0<\tau_1\le\tau_2<2L$ such that 
\begin{align}\label{constant}
t_k\in\sbrac{\tau_1,\tau_2}\;\text{ for all }\;k\in\N.
\end{align}
Then either $\inf f(x^k)>-\infty$, or the following assertions hold:

{\bf(i)} $\varepsilon_k\downarrow0$ and $r_k\downarrow0$ as $k\to\infty$.

{\bf(ii)} Every accumulation point of $\set{x^k}$ is a stationary point of $f$.

{\bf(iii)} If the sequence $\set{x^k}$ is bounded, then the set of accumulation points of $\set{x^k}$ is nonempty, compact, and connected.

{\bf(iv)} If $\set{x^k}$ has an isolated accumulation point, then the entire sequence $\set{x^k}$ converges to it.
\end{Proposition}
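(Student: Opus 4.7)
The plan is to reduce the proposition to Theorem~\ref{corollary 2.6}, which already delivers assertions (i)--(iv) for any continuous $G$ provided the stepsizes are bounded from above and $\sum_{k=1}^\infty\|d^k\|^2<\infty$. Here $G=\nabla f$ is continuous and \eqref{constant} gives the stepsize bound $t_k\le\tau_2$, so the entire task collapses to verifying the square-summability of $\{d^k\}$ in the case $\inf_k f(x^k)>-\infty$ (the complementary case being built into the dichotomy of the statement).

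To produce square-summability, I would derive a sufficient-decrease inequality. The key algebraic observation exploits the explicit form \eqref{dk is projection}: for every non-null index $k$ one has $\|d^k\|=\|g^k\|-\varepsilon_k$ and $\langle g^k,d^k\rangle=-\|g^k\|\|d^k\|$, so combining this with the inexactness bound $\|g^k-\nabla f(x^k)\|\le\varepsilon_k$ from \eqref{gk G(xk)} and Cauchy--Schwarz gives
\[
\langle\nabla f(x^k),d^k\rangle=\langle g^k,d^k\rangle+\langle\nabla f(x^k)-g^k,d^k\rangle\le -\|g^k\|\|d^k\|+\varepsilon_k\|d^k\|=-\|d^k\|^2,
\]
an inequality that persists trivially for null iterations with $d^k=0$. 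Applying the descent condition \eqref{descent condition} to the pair $(x^{k+1},x^k)$ together with $x^{k+1}-x^k=t_k d^k$ then yields
\[
f(x^{k+1})\le f(x^k)+t_k\langle\nabla f(x^k),d^k\rangle+\tfrac{L t_k^2}{2}\|d^k\|^2\le f(x^k)-t_k\Bigl(1-\tfrac{L t_k}{2}\Bigr)\|d^k\|^2,
\]
and the stepsize bound \eqref{constant} ensures that $t_k(1-L t_k/2)\ge\kappa$ for some constant $\kappa=\kappa(\tau_1,\tau_2,L)>0$. Telescoping this inequality and invoking $\inf_k f(x^k)>-\infty$ gives
\[
\kappa\sum_{k=1}^\infty\|d^k\|^2\le f(x^1)-\liminf_{k\to\infty}f(x^k)<\infty,
\]
which is exactly the summability hypothesis of Theorem~\ref{corollary 2.6}.

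With both hypotheses of Theorem~\ref{corollary 2.6} now verified, I would invoke that theorem with $G=\nabla f$: the zeros of $\nabla f$ are precisely the stationary points of $f$, so assertions (i)--(iv) of that theorem translate directly into (i)--(iv) of the present proposition. The only genuinely nontrivial step in the whole argument is the inner-product bound $\langle\nabla f(x^k),d^k\rangle\le-\|d^k\|^2$, in which the inexact information about $\nabla f$ has to be paid for by the defect $\varepsilon_k$ built into the definition of $d^k$; everything else is a standard descent-then-telescope argument followed by an appeal to the prior result.
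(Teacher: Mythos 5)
Your argument is correct, but note that the paper itself does not prove Proposition~\ref{convergence IRG constant}: it is imported verbatim by citation from \cite[Theorems~5.3, 5.6, 5.9]{kmt22.1}, so there is no in-paper proof to match. Your reconstruction is exactly the strategy the paper deploys for the analogous IPGM statement (Proposition~\ref{prop dk goes 0} feeding Theorem~\ref{stationarity of pgm}, which then invokes Theorem~\ref{corollary 2.6}): the pivotal estimate $\langle\nabla f(x^k),d^k\rangle\le-\|d^k\|^2$ is the same projection fact the paper records as the second assertion of Proposition~\ref{tk < 2 lambda}(iv), and the descent-then-telescope step producing $\sum_k\|d^k\|^2<\infty$ is the simplified ($g\equiv 0$, $\rho_k\equiv 0$) version of \eqref{ine 5}. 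One caveat: your claim that \eqref{constant} forces $t_k(1-Lt_k/2)\ge\kappa>0$ requires $\tau_2<2/L$, not the literal $\tau_2<2L$ printed in the statement; the printed condition is a typo (the paper applies the proposition in Theorem~\ref{convergence ppm} with $L:=\lambda^{-1}$ and $\tau_2<2\lambda=2/L$, and Algorithm~\ref{ppm} imposes precisely $\tau_2<2\lambda$), so you have silently used the intended hypothesis, which is the one under which the parabola $t\mapsto t-Lt^2/2$ stays bounded away from zero on $[\tau_1,\tau_2]$. A second cosmetic point: the dichotomy in the statement should read ``either $\inf f(x^k)=-\infty$ or \dots''; your reading, proving the assertions under $\inf_k f(x^k)>-\infty$, is the mathematically meaningful one.
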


\begin{Proposition}\label{convergence IRG KL}
Let $\set{x^k}$ be the sequence generated by Algorithm~\ref{scheme continuous mapping} with $G=\nabla f$, where $f$ a $\mathcal{C}^1$-smooth function satisfying the descent condition for some constant $L>0$, and where $\set{t_k}$ is chosen as in \eqref{constant}. Assume that $f$ satisfies the KL property at some accumulation point $\bar x$ of $\set{x^k}$. Then the entire sequence $\set{x^k}$ converges to $\bar x$, which is a stationary point of $f$. Supposing in addition that $\N\setminus\mathcal{N}$ is infinite and the function given in the KL property is $\psi(t):=Mt^{1-q}$ for some $M>0$ and $q\in(0,1)$, the following convergence rates are guaranteed for the sequence $\set{z^k}$ defined as in \eqref{0 zk xjk}:

{\bf(i)} If $q\in(0,1/2]$, then the sequence $\set{z^k}$ converges linearly to $\bar x$.

{\bf(ii)} If $q\in(1/2,1)$, then there exists a positive constant $\gamma$ such that 
\begin{align*}
 \norm{z^k-\bar x}\le\gamma k^{-\frac{1-q}{2q-1}}\;\text{ for all large }\;k\in\N.
\end{align*}
\end{Proposition}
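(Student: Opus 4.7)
The strategy is to verify that the inexact gradient scheme meets the three classical ingredients of the Attouch--Bolte--Svaiter KL framework (sufficient decrease, a relative-error subgradient bound, and a continuity condition), and then to invoke the standard convergence and rate analysis.

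First, I would establish a \emph{sufficient decrease} inequality along non-null iterations. Writing $g^k = \nabla f(x^k) + e^k$ with $\|e^k\|\le\varepsilon_k$ and using $d^k = -(\|g^k\|-\varepsilon_k)\|g^k\|^{-1}g^k$, a direct computation yields $\langle\nabla f(x^k),d^k\rangle \le -\|d^k\|^2$. Combining this with the descent condition \eqref{descent condition} gives
\begin{equation*}
f(x^{k+1}) \le f(x^k) - t_k\Bigl(1-\tfrac{Lt_k}{2}\Bigr)\|d^k\|^2,
\end{equation*}
so that, under the stepsize restriction \eqref{constant}, there is a uniform $c_1>0$ with $f(x^k)-f(x^{k+1})\ge c_1\|d^k\|^2\ge c_1\tau_1^{-2}\|x^{k+1}-x^k\|^2$. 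In particular $\sum_k\|d^k\|^2<\infty$, which triggers Proposition~\ref{convergence IRG constant}: all accumulation points are stationary. Second, for the \emph{relative-error} estimate I would use inequality \eqref{G(xk)<3dk} from Remark~\ref{null remark 2}(iii) together with $\|x^{k+1}-x^k\|=t_k\|d^k\|\ge\tau_1\|d^k\|$ on non-null iterations, obtaining $\|\nabla f(x^k)\|\le (3/\tau_1)\|x^{k+1}-x^k\|$; continuity of $\nabla f$ then transfers this to a bound involving $\|\nabla f(x^{k+1})\|$ plus a vanishing error.

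Next, I would run the standard KL argument at the accumulation point $\bar x$. Fix a neighborhood $U$ of $\bar x$ and the desingularizing function $\psi$ from Definition~\ref{KL ine}. Taking a tail along which iterates remain in $U$ and satisfy $f(\bar x) < f(x^k)<f(\bar x)+\eta$, concavity of $\psi$ combined with the KL inequality \eqref{KL 2} and the two bounds above yields
\begin{equation*}
\psi(f(x^k)-f(\bar x)) - \psi(f(x^{k+1})-f(\bar x))\ \ge\ C\,\|x^{k+1}-x^k\|
\end{equation*}
for some $C>0$ on non-null indices (null indices contribute nothing since $x^{k+1}=x^k$). Telescoping gives $\sum_k\|x^{k+1}-x^k\|<\infty$, which proves that the entire sequence $\{x^k\}$ converges to $\bar x$; Proposition~\ref{convergence IRG constant} then identifies $\bar x$ as a stationary point.

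Finally, for the rates with $\psi(t)=Mt^{1-q}$, I would restrict to the non-null subsequence $\{z^k\}$ from Proposition~\ref{0 prop non null sequence}. Set $\Delta_k:=f(z^k)-f(\bar x)$ and $S_k:=\sum_{j\ge k}\|z^{j+1}-z^j\|$. The KL inequality gives $\Delta_k^{2q}\le C_1\|\nabla f(z^k)\|^2$, while sufficient decrease and the gradient bound yield $\Delta_k-\Delta_{k+1}\ge C_2\|\nabla f(z^k)\|^2$. Combining these produces the recursion $\Delta_k-\Delta_{k+1}\ge C_3\Delta_k^{2q}$. When $q\in(0,1/2]$, the exponent $2q\le 1$ forces $\Delta_{k+1}\le(1-C_3)\Delta_k$, i.e.\ geometric decay of $\Delta_k$ and hence, via the tail-length bound $S_k\le C_4\psi(\Delta_k)$, linear convergence of $\|z^k-\bar x\|\le S_k$. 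When $q\in(1/2,1)$, the classical Chung-type lemma applied to the recursion produces the polynomial rate $\Delta_k=O(k^{-1/(2q-1)})$, and $S_k\le C_4\Delta_k^{1-q}$ then gives the claimed $O(k^{-(1-q)/(2q-1)})$ decay of $\|z^k-\bar x\|$. The main technical obstacle I anticipate is bookkeeping the inexactness: the errors $\varepsilon_k$ must be absorbed into the constants without destroying the recursion, which is why the condition $\varepsilon_k\le r_k$ from Remark~\ref{null remark 2}(iii) (ensuring $\|\nabla f(x^k)\|\le 3\|d^k\|$) is essential and must be propagated carefully through every estimate.
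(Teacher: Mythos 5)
Your proposal is correct and follows essentially the same route as the paper: the paper itself imports this proposition without proof from \cite[Theorems~5.3, 5.6, 5.9]{kmt22.1}, but its own detailed proof of the analogous result for IPGM (Theorem~\ref{convergence rate ipgm}, via the three estimates of Proposition~\ref{properties surrogate} and Lemma~\ref{lemma KL fran}) is exactly the sufficient-decrease / relative-error / KL-telescoping scheme you describe, and your rate derivation through the recursion $\Delta_k-\Delta_{k+1}\ge C_3\Delta_k^{2q}$ is an equivalent standard variant of the paper's recursion $s_p^{q/(1-q)}\le\beta(s_{p-1}-s_p)$. Two small points: the constant in $f(x^k)-f(x^{k+1})\ge c_1\tau_1^{-2}\|x^{k+1}-x^k\|^2$ should involve $\tau_2^{-2}$ (since $\|x^{k+1}-x^k\|=t_k\|d^k\|\le\tau_2\|d^k\|$), and the phrase ``taking a tail along which iterates remain in $U$'' conceals the usual bootstrapping induction (as carried out around \eqref{ine claim 1}--\eqref{letting k infi} in the paper) by which the finite-length estimate itself keeps the iterates in $U$; you are right that the hypothesis $\varepsilon_k\le r_k$ (i.e., $\varepsilon_1\le r_1$, $\theta\le\mu$, which the statement tacitly assumes) is indispensable for the bound $\|\nabla f(x^k)\|\le 3\|d^k\|$.
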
\vspace*{-0.1in}

\subsection{Inexact Proximal Point Method}

Now we are ready to introduce and develop the {\em inexact proximal point method} for finding stationary points of a $\varrho$-{\em weakly convex} function $g:\R^n\rightarrow\overline\R$. The algorithm is as follows.\vspace*{0.03in}

\begin{Algorithm}[Inexact proximal point method for weakly convex functions]\label{ppm}
\setcounter{Step}{-1}

\begin{Step}[initialization]\rm
Select a proximal parameter $\lambda \in\big(0,\varrho^{-1}\big)$ and some $\tau_1,\tau_2$ satisfying the conditions $0<\tau_1\le \tau_2<2\lambda$. Choose a starting point $x^1\in \mathbb{R}^n$, initial radii $\varepsilon_1,r_1>0$, and radius reduction factors $\mu,\theta\in (0,1)$. Set $k:=1.$
\end{Step}

\begin{Step}[inexact proximal mapping]\rm\label{step 1 ppm}
Find some $p^k$ and define $g^k$ such that \begin{align}\label{inexact proximal mapping}
\norm{p^k-\prox_{\lambda g}(x^k)}\le \lambda\varepsilon_k \text{ and }g^k:=\lambda^{-1}(x^k-p^k).
\end{align}
\end{Step}

\begin{Step}[radius update]\rm
If $\norm{g^k}\le r_k+\varepsilon_k$, then $r_{k+1}:=\mu r_k,\varepsilon_{k+1}:=\theta\varepsilon_k,d^k:=0$. Otherwise, set $r_{k+1}:=r_k,\;\varepsilon_{k+1}:=\varepsilon_k$, and 
\begin{align*}
d^k:=-\dfrac{\norm{g^k}-\varepsilon_k}{\norm{g^k}}g^k.
\end{align*}
\end{Step}

\begin{Step}[stepsize]\rm
Choose a stepsize $t_k\in [\tau_1,\tau_2].$
\end{Step}

\begin{Step}[iteration update]\rm
Set $x^{k+1}:=x^k+t_k d^k $. Increase $k$ by $1$ and go back to Step \ref{step 1 ppm}.
\end{Step}
\end{Algorithm}

We recall some important properties of Moreau envelopes of weakly convex functions that are useful in the convergence analysis of Algorithm \ref{ppm}. 

\begin{Proposition}\label{weakly and descent}
Let $g :\R^n\rightarrow\overline{\R}$ be a $\varrho$-weakly convex function, and let $\lambda\in(0,\varrho^{-1})$. Then the following assertions holds:

{\bf(i)} The Moreau envelope $e_{\lambda}g$ is $\mathcal{C}^1$-smooth  and its gradient calculated by
\begin{align}\label{prop 2.8 i}
\nabla e_{\lambda}g (x)=\lambda^{-1}(x-\prox_{\lambda g }(x)).
\end{align}

{\bf(ii)} The Moreau envelope $e_{\lambda}g$ satisfies the descent condition with constant $\lambda^{-1}$ meaning that
\begin{align}
e_{\lambda}g (y)\le e_{\lambda}g (x)+\dotproduct{\nabla e_{\lambda}g(x),y-x}+(2\lambda)^{-1}\norm{y-x}^2\text{ for all }x,y\in\R^n.
\end{align}

{\bf(iii)} Every stationary point of $e_\lambda g$ is a stationary point of $g$.
\end{Proposition}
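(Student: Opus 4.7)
The plan is to dispatch the three assertions in order, with the bulk of the work going into (i).

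For (i), note that since $\lambda<\varrho^{-1}$, Proposition~\ref{prox lipschitz} already provides single-valuedness (and Lipschitz continuity) of $\prox_{\lambda g}$. To establish differentiability together with the gradient formula, I would set $p:=\prox_{\lambda g}(x)$ and derive the subgradient inclusion $\lambda^{-1}(x-p)\in\partial g(p)$ from the Fermat rule applied to the strongly convex problem \eqref{Prox} (cf.\ Remark~\ref{remark weakly descent}(iii)). Then I would sandwich $e_\lambda g(y)-e_\lambda g(x)$ between a lower estimate built from this inclusion together with the convexity of $g(\cdot)+\frac{\varrho}{2}\norm{\cdot}^2$, and the trivial upper bound $e_\lambda g(y)\le g(p)+(2\lambda)^{-1}\norm{p-y}^2$ obtained by plugging $p$ into the definition \eqref{Moreau}. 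Passing to the limit $y\to x$ after dividing by $\norm{y-x}$ yields differentiability with gradient $\lambda^{-1}(x-p)$, and continuity of the gradient follows at once from the Lipschitz continuity of $\prox_{\lambda g}$.

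For (ii), I would reuse the two relations from the proof of (i): the exact identity $e_\lambda g(x)=g(p)+(2\lambda)^{-1}\norm{p-x}^2$ and the upper bound $e_\lambda g(y)\le g(p)+(2\lambda)^{-1}\norm{p-y}^2$ with $p=\prox_{\lambda g}(x)$. Subtracting and expanding $\norm{p-y}^2=\norm{p-x}^2+2\dotproduct{p-x,x-y}+\norm{y-x}^2$ cancels the $g(p)$ terms and produces exactly the descent inequality with constant $\lambda^{-1}$ after substituting \eqref{prop 2.8 i}.

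For (iii), if $\nabla e_\lambda g(\bar x)=0$ then by (i) we have $\bar x=\prox_{\lambda g}(\bar x)$, so $\bar x$ minimizes $\Phi_{\lambda,\bar x}(\cdot)=g(\cdot)+(2\lambda)^{-1}\norm{\cdot-\bar x}^2$. Applying the generalized Fermat rule \eqref{gen fermat rule} and the exact sum rule for the limiting subdifferential when one summand is $\mathcal{C}^1$-smooth (recall also Remark~\ref{remark weakly descent}(iv)) yields $0\in\partial g(\bar x)$, i.e., $\bar x$ is a stationary point of $g$.

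The main obstacle will be the differentiability part of (i): the classical convex Moreau identity does not apply directly to weakly convex $g$, so one must exploit the smooth-plus-convex decomposition from Remark~\ref{remark weakly descent}(iii) to transfer the convex argument to the present setting via a two-sided local estimate. Once (i) is in hand, both (ii) and (iii) are short consequences.
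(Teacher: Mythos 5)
Your proposal is correct, but it takes a different route from the paper on the substantive parts (i) and (ii): the paper simply cites \cite[Lemma~2.5]{davis21} for both assertions and only writes out the argument for (iii), which you prove in exactly the same way (via $\bar x=\prox_{\lambda g}(\bar x)$, the generalized Fermat rule, and the sum rule). What you do differently is supply a self-contained two-sided estimate for (i): the upper bound $e_\lambda g(y)-e_\lambda g(x)\le\la\lambda^{-1}(x-p),y-x\ra+(2\lambda)^{-1}\norm{y-x}^2$ comes for free by testing \eqref{Moreau} at $p=\prox_{\lambda g}(x)$, and a matching lower bound of the form $\la\lambda^{-1}(x-p),y-x\ra-C\norm{y-x}^2$ follows from $\lambda^{-1}(x-p)\in\partial g(p)$ together with the convexity of $g+\frac{\varrho}{2}\norm{\cdot}^2$; squeezing and dividing by $\norm{y-x}$ gives differentiability, and the gradient is continuous because $\prox_{\lambda g}$ is Lipschitz by Proposition~\ref{prox lipschitz}. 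This buys a proof that is independent of the external reference and makes transparent why (ii) is literally the already-established upper estimate; the paper's citation buys brevity and, in \cite{davis21}, a sharper descent constant. The one place where your sketch glosses over a detail is the lower estimate: after inserting the competitor $q=\prox_{\lambda g}(y)$ you must control the displacement $q-p$, either by invoking the Lipschitz continuity of $\prox_{\lambda g}$ or by minimizing the resulting quadratic in $q-p$, which is coercive precisely because $\lambda^{-1}-\varrho>0$. Both fixes are available from the standing assumptions, so this is a detail to spell out rather than a gap.
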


\begin{proof} Assertions (i) and (ii) follow from \cite[Lemma~2.5]{davis21}. Taking any $x$ with $\nabla e_\lambda g(x)=0$, we deduce from \eqref{prop 2.8 i} that $x=\prox_{\lambda g}(x)$. By \eqref{Prox}, $x$ solves the problem
\begin{align*}
\mathop{\rm minimize}_{y\in\R^n}\big\{g(y)+(2\lambda)^{-1}\norm{y-x}^2\big\}.
\end{align*}
Applying the generalized Fermat rule and the subdifferential sum rule to the function $y\mapsto g(y)+(2\lambda)^{-1}\norm{y-x}^2$ gives us $0\in\partial g(x)$, which verifies assertion (iii).
\end{proof}

\begin{Remark}\rm \label{reduce prop}
Proposition~\ref{weakly and descent}(i) allows us to conclude that the inexact proximal point method in Algorithm~\ref{ppm} is a special case of the general scheme to find zeros of continuous functions proposed in Algorithm~\ref{scheme continuous mapping}. Indeed, we can equivalently rewrite condition \eqref{inexact proximal mapping} as follows:
\begin{align*}
\norm{g^k-
\nabla e_\lambda g(x^k)}&=
\norm{ \lambda^{-1}(x^k-p^k)-
\lambda^{-1}(x^k-{\rm Prox}_{\lambda g}(x^k))}\\
&=\lambda^{-1}\norm{{\rm Prox}_{\lambda g}(x^k)-p^k}\le \varepsilon_k.
\end{align*}
Therefore, Algorithm~\ref{scheme continuous mapping} reduces to Algorithm~\ref{ppm} with $G:=\nabla e_\lambda g$ and $t_k\in[\tau_1,\tau_2]$.
\end{Remark}

The next theorem, which is based on Proposition~\ref{weakly and descent} and Theorem \ref{convergence IRG constant}, reveals convergence properties of the inexact proximal point Algorithm~\ref{scheme continuous mapping} to find stationary points of weakly convex functions.

\begin{Theorem}\label{convergence ppm}
Let $g:\R^n\rightarrow\Bar\R$ be a $\varrho$-weakly convex function with $\varrho>0$, and let $\set{x^k}$ be the sequence generated by Algorithm~\ref{ppm}. If $\norm{x^k}\not\rightarrow\infty$, then the following assertions hold:

{\bf(i)} $\varepsilon_k\downarrow0$ and $r_k\downarrow 0$ as $k\to\infty$.

{\bf(ii)} Every accumulation point of $\set{x^k}$ is a stationary point of $g$.

{\bf(iii)} If the sequence $\set{x^k}$ is bounded, then the set of accumulation points of $\set{x^k}$ is nonempty, compact, and connected.

{\bf(iv)} If $\set{x^k}$ has an isolated accumulation point, then the entire sequence $\set{x^k}$ converges to it.
\end{Theorem}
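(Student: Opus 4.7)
The plan is to invoke the general theory of Algorithm~\ref{scheme continuous mapping} as developed in Proposition~\ref{convergence IRG constant}. By Remark~\ref{reduce prop}, Algorithm~\ref{ppm} coincides with Algorithm~\ref{scheme continuous mapping} applied to the mapping $G:=\nabla e_\lambda g$, while Proposition~\ref{weakly and descent}(i)--(ii) ensures that $e_\lambda g$ is $\mathcal{C}^1$-smooth and satisfies the descent condition with constant $L=\lambda^{-1}$. The parameter choice $\tau_2<2\lambda=2/L$ prescribed in Step~0 of Algorithm~\ref{ppm} matches exactly the stepsize hypothesis \eqref{constant} imposed in Proposition~\ref{convergence IRG constant}.

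Applying Proposition~\ref{convergence IRG constant} to $f:=e_\lambda g$ produces a dichotomy: either the function values $e_\lambda g(x^k)$ are unbounded below along the iterates, or assertions (i)--(iv) of that proposition hold with stationarity understood for $e_\lambda g$. I would rule out the bad alternative using the standing hypothesis $\|x^k\|\not\to\infty$: extract a bounded subsequence $\{x^{k_j}\}$, use continuity of $e_\lambda g$ to bound $\{e_\lambda g(x^{k_j})\}$, and combine this with the monotone-descent behavior of the function values along the iterates---a standard consequence of a gradient-type step under the descent condition with $t_k<2/L$, already embedded in the proof of Proposition~\ref{convergence IRG constant}---to force $\inf_k e_\lambda g(x^k)>-\infty$.

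Once the bad branch of the dichotomy is excluded, parts (i), (iii), and (iv) of the theorem transfer verbatim from the corresponding statements of Proposition~\ref{convergence IRG constant}. For part~(ii), Proposition~\ref{convergence IRG constant}(ii) delivers that every accumulation point $\bar x$ of $\{x^k\}$ satisfies $\nabla e_\lambda g(\bar x)=0$, and Proposition~\ref{weakly and descent}(iii) then upgrades this into the stationarity condition $0\in\partial g(\bar x)$ for $g$ itself. The only nontrivial point in the whole argument is the reconciliation between the Proposition~\ref{convergence IRG constant} dichotomy and the hypothesis $\|x^k\|\not\to\infty$, which is where the monotone descent of $\{e_\lambda g(x^k)\}$ must be invoked rather than merely the formal conclusion of the cited proposition; everything else is a routine translation through Remark~\ref{reduce prop} and Proposition~\ref{weakly and descent}.
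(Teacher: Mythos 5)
Your proposal is correct and follows essentially the same route as the paper: reduce Algorithm~\ref{ppm} to Algorithm~\ref{scheme continuous mapping} via Remark~\ref{reduce prop}, apply Proposition~\ref{convergence IRG constant} with $f:=e_\lambda g$ and $L:=\lambda^{-1}$, and pass from stationarity of $e_\lambda g$ to stationarity of $g$ through Proposition~\ref{weakly and descent}(iii). The one place you go beyond the paper's own (rather terse) proof is in explicitly using the hypothesis $\|x^k\|\not\to\infty$ together with the monotone descent of $\{e_\lambda g(x^k)\}$ to exclude the divergence branch of the dichotomy in Proposition~\ref{convergence IRG constant}; this is a genuine and welcome completion of a step the paper leaves implicit.
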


\begin{proof}
It follows from Remark~\ref{reduce prop} that Algorithm~\ref{scheme continuous mapping} reduces to Algorithm~\ref{ppm} with $G=\nabla e_\lambda g$ and $t_k\in [\tau_1,\tau_2]$ for all $k\in\N$. By the descent property with constant $\lambda^{-1}$ of $e_\lambda g$ from Proposition~\ref{weakly and descent}(ii) and the choices of $\tau_1,\tau_2$, the assumptions for Proposition~\ref{convergence IRG constant} with $f:=e_\lambda g$ and $L:=\lambda^{-1}$ are satisfied. This ensures the fulfillment of assertions (i), (iii), (iv) and that every accumulation point of $\set{x^k}$ is a stationary point of $e_\lambda g$. Combining this with Proposition~\ref{weakly and descent}(iii) yields (ii) and completes the proof.  
\end{proof}

The final result of this subsection employs the KL property for Moreau envelopes to establish the global convergence of the entire sequence of iterates in Algorithm~\ref{ppm} to a stationary point of a weakly convex function with giving constructive convergence rates.

\begin{Theorem}\label{convergence KL ppm}
Let $g:\R^n\rightarrow\Bar\R$ be a $\varrho$-weakly convex function with $\varrho\ge 0$, and let $\set{x^k}$ be the sequence generated by Algorithm~\ref{ppm} with $\varepsilon_1\le r_1$ and $\theta\le\mu$. Assume that $e_\lambda g$ satisfies the KL property at some accumulation point $\bar x$ of $\set{x^k}$. Then the entire sequence $\set{x^k}$ converges to $\bar x$, which is a stationary point of $g$. If $\N\setminus\mathcal{N}$ is infinite and the function $\psi$ in the KL condition is  $\psi(t):=Mt^{1-q}$ for some $M>0$ and $q\in(0,1)$, then the following convergence rates are guaranteed for the sequence $\set{z^k}$ from \eqref{0 zk xjk}:

{\bf(i)} For $q\in(0,1/2]$, the sequence $\set{z^k}$ converges linearly to $\bar x$.

{\bf(ii)} For $q\in(1/2,1)$, there exists $\gg>0$ such that 
\begin{align*}
 \norm{z^k-\bar x}\le\gamma k^{-\frac{1-q}{2q-1}}\;\text{ for all large }\;k\in\N.
\end{align*}
\end{Theorem}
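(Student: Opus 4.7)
The plan is to reduce the entire statement to an application of Proposition~\ref{convergence IRG KL} with $f := e_\lambda g$ and $L := \lambda^{-1}$. The groundwork for this reduction is already essentially in place through Remark~\ref{reduce prop} and Proposition~\ref{weakly and descent}: the former identifies Algorithm~\ref{ppm} as a specialization of the general Algorithm~\ref{scheme continuous mapping} with the continuous mapping taken to be $G = \nabla e_\lambda g$ and stepsizes $t_k\in[\tau_1,\tau_2]$; the latter guarantees both that $e_\lambda g$ is $\mathcal C^1$-smooth and that it satisfies the descent condition with constant $L=\lambda^{-1}$. Since the choice $0<\tau_1\le\tau_2<2\lambda$ in the initialization of Algorithm~\ref{ppm} is exactly $\tau_2<2/L$, condition \eqref{constant} of Proposition~\ref{convergence IRG KL} is satisfied.

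With this identification, I would first apply Proposition~\ref{convergence IRG KL} directly. The KL property of $e_\lambda g$ at the accumulation point $\bar x$ of $\set{x^k}$ is assumed, so Proposition~\ref{convergence IRG KL} yields that the entire sequence $\set{x^k}$ converges to $\bar x$ and that $\bar x$ is a stationary point of $e_\lambda g$, i.e., $\nabla e_\lambda g(\bar x)=0$. The transfer from stationarity of the envelope to stationarity of $g$ is then immediate from Proposition~\ref{weakly and descent}(iii), which gives the first conclusion of the theorem.

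For the convergence rates under $\psi(t)=Mt^{1-q}$, the same reduction applies: since $\N\setminus\mathcal N$ is infinite, the subsequence $\set{z^k}$ defined in \eqref{0 zk xjk} is well-defined, and Proposition~\ref{convergence IRG KL}(i)-(ii) delivers linear convergence for $q\in(0,1/2]$ and the sublinear rate $\norm{z^k-\bar x}\le \gg k^{-(1-q)/(2q-1)}$ for $q\in(1/2,1)$, which is precisely what assertions (i) and (ii) of Theorem~\ref{convergence KL ppm} claim.

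The only subtlety that requires comment, rather than a grind of calculations, is the role of the two extra assumptions $\varepsilon_1\le r_1$ and $\theta\le\mu$ in the hypothesis. These are inherited from the underlying framework in \cite{kmt22.1} and serve to propagate the inequality $\varepsilon_k\le r_k$ to every iteration (by induction: at null iterations both sides multiply by $\theta$ and $\mu$ respectively, and since $\theta\le\mu$ the inequality is preserved; at non-null iterations neither side changes). This inequality is what powers the bound $\norm{G(x^k)}\le 3\norm{d^k}$ in Remark~\ref{null remark 2}(iii), which is the link used in the proof of Proposition~\ref{convergence IRG KL}. I would therefore note this propagation explicitly at the outset, so that the hypotheses of Proposition~\ref{convergence IRG KL} are fully in force, and then invoke that proposition to conclude. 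The main conceptual obstacle, if any, is simply making this reduction airtight; all the analytic heavy lifting is already done in Propositions~\ref{weakly and descent} and~\ref{convergence IRG KL}.
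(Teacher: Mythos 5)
Your proposal is correct and follows essentially the same route as the paper: reduce Algorithm~\ref{ppm} to Algorithm~\ref{scheme continuous mapping} with $G=\nabla e_\lambda g$ via Remark~\ref{reduce prop}, invoke Proposition~\ref{convergence IRG KL} with $f:=e_\lambda g$ and $L:=\lambda^{-1}$ using the descent property from Proposition~\ref{weakly and descent}(ii), and transfer stationarity back to $g$ via Proposition~\ref{weakly and descent}(iii). Your explicit note on how $\varepsilon_1\le r_1$ and $\theta\le\mu$ propagate $\varepsilon_k\le r_k$ is a useful clarification the paper leaves implicit, but it does not change the argument.
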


\begin{proof}
By Remark~\ref{reduce prop}, Algorithm~\ref{ppm} reduces to Algorithm~\ref{scheme continuous mapping} with $G:=\nabla e_\lambda g$ and $t_k\in[\tau_1,\tau_2]$, $k\in\N$. If $\set{x^k}$ has an accumulation point $\bar x$ and $e_\lambda g$ satisfies the KL property at $\bar x$, then the assumptions of Proposition~\ref{convergence IRG KL} are satisfied for $f:=e_\lambda g$. Therefore, the claimed results hold with taking into account that stationary points of $f$ are stationary point of $g$ by Proposition~\ref{weakly and descent}(iii).
\end{proof}\vspace*{-0.2in}

\subsection{Inexact Proximal Gradient Method} 

In this subsection, we derive our major results on the new {\em inexact proximal gradient method} to solve optimization problems of the type
\begin{align}\label{eq:composite}
{\rm minimize}\quad \varphi(x):=f(x)+g(x),
\end{align}
where $f:\R^n\rightarrow\R$ is a $\mathcal{C}^1$-smooth function satisfying the descent condition \eqref{descent condition} with some $L>0$, and where $g:\R^n\rightarrow\Bar\R$ is a proper, l.s.c., $\varrho$-weakly convex function with $\varrho\ge 0$. As mention in Section~\ref{sec:intro},  the formally unconstrained framework of \eqref{eq:composite} covers problems of constrained optimization via $x\in\dom g$.

Define the {\em composite gradient} mapping $T_\lambda:\R^n\rightarrow\R^n$ as $\lambda\in(0,\varrho^{-1})$ with $0^{-1}:=\infty$ by
\begin{align*} T_\lambda (x)=\prox_{\lambda g}(F_\lambda(x)),
\end{align*}
where $F_\lambda(x):=x-\lambda \nabla f(x)$. The mapping $T_\lambda$ can be expressed by \eqref{phi lambda} as 
\begin{align}\label{T and fermat}
T_\lambda (x)=\mathop{\rm argmin}_{y\in\R^n}\Phi_{\lambda,F_\lambda(x)}(y),
\end{align}
providing the following {\em quadratic approximation} of $\varphi$ (see \cite{beck09}):
\begin{align}
T_\lambda(x)
&=\mathop{\rm argmin}_{y\in\R^n}\big\{g(y)+(2\lambda)^{-1}\norm{y-(x-\lambda\nabla f(x))}^2\big\}\nonumber\\
&=\mathop{\rm argmin}_{y\in\R^n}\big\{(\lambda/2)\norm{\nabla f(x)}^2+\dotproduct{\nabla f(x),y-x}+(2\lambda)^{-1}\norm{y-x}^2+g(y)\big\}\nonumber\\
&=\mathop{\rm argmin}_{y\in\R^n}\big\{f(x)+\dotproduct{\nabla f(x),y-x}+(2\lambda)^{-1}\norm{y-x}^2+g(y)\big\}.\label{argmin subproblem}
\end{align}
Define further the \textit{gradient} mapping $G_\lambda:\R^n\rightarrow\R^n$ by
\begin{align}\label{def G lambda}
G_\lambda(x):=\lambda^{-1}(x-T_\lambda(x)).
\end{align}

\begin{Remark}\rm\label{prop: prox property} By Proposition~\ref{prox lipschitz}, $\prox_{\lambda g}$ is continuous. Then the continuity of $\nabla f$ yields this property for $F_\lambda,T_\lambda$, and $G_{\lambda}$.
Moreover, $G_\lambda(x)=0$ implies that $x=T_\lambda(x).$ It follows from \eqref{argmin subproblem} with taking into account the generalized Fermat's rule and the subdifferential sum rule that
\begin{align*}
0\in \nabla f(x)+\partial g(x)=\partial\varphi(x).
\end{align*}
\end{Remark}

Now we are ready to propose the {\em inexact proximal gradient method} (IPGM) to find stationary points of the class of functions $\varphi$ given \eqref{eq:composite} under the assumptions therein.

\begin{Algorithm}[Inexact proximal gradient method]\rm\hlabel{pgm}
\setcounter{Step}{-1}
\begin{Step}[initialization]\rm
Select a proximal parameter $\lambda \in\big(0,(L+\varrho)^{-1}\big),$ a starting point $x^1\in \mathbb{R}^n$, initial radii $\varepsilon_1,r_1> 0$, radius reduction factors $\mu,\theta\in (0,1)$, and sequence of manually controlled errors $\set{\rho_k}\subset[0,\infty)$. Set $k:=1.$
\end{Step}
\begin{Step}[inexact proximal mapping]\rm\label{Step 1}
Find a vector $p^k\in\R^n$ satisfying
\begin{align}\label{subproblem}
\Phi_{\lambda,F_\lambda(x^k)}(p^k)\le \inf  \Phi_{\lambda,F_\lambda(x^k)}+ \min\set{\dfrac{\lambda(1-\lambda\varrho)\varepsilon_k^2}{2},\rho_k}
\end{align}
and define $g^k:=\lambda^{-1}(x^k-p^k)$.
\end{Step}
\begin{Step}[iteration update]\rm\label{Step 2}
If $\norm{g^k}\le r_k+\varepsilon_k$, then $r_{k+1}:=\mu r_k,\varepsilon_{k+1}:=\theta\varepsilon_k$ and $x^{k+1}:=x^k.$ Otherwise, set $r_{k+1}:=r_k,\varepsilon_{k+1}:=\varepsilon_k$, and $x^{k+1}:=p^k$. Increase $k$ by $1$ and go back to Step~\ref{Step 1}.
\end{Step}
\end{Algorithm}

The next proposition shows that the sequence $\set{x^k}$ in Algorithm~\ref{pgm} is the one generated by Algorithm~\ref{scheme continuous mapping} for finding zeros of the continuous mapping $G_\lambda$ from \eqref{def G lambda}.

\begin{Proposition}\label{tk < 2 lambda}
Let $\set{g^k},\set{r_k}$, and $\set{\varepsilon_k}$ be the sequences generated by Algorithm~\ref{pgm}, and let the sequences of directions $\set{d^k}$ and stepsizes $\set{t_k}$ be defined as 
\begin{align}\label{1.8}
d^k:=\begin{cases}0&\text{\rm if }\norm{g^k}\le r_k+\varepsilon_k,\\
-\dfrac{\norm{g^k}-\varepsilon_k}{\norm{g^k}}g^k&\text{\rm otherwise}
\end{cases}\quad \text{ and }\quad t_k:=\begin{cases}2\lambda&\text{\rm if }d^k=0,\\
\lambda\dfrac{\norm{g^k}}{\norm{d^k}}&\text{\rm if }d^k\ne 0.
\end{cases}
\end{align}
Then the following assertions hold for all $k\in\N:$

{\bf(i)} $\norm{p^k-T_\lambda(x^k)}\le\min \set{\lambda \varepsilon_k, \sqrt{\dfrac{2\rho_k}{\lambda^{-1}-\varrho}}}$.

{\bf(ii)}  $x^{k+1}=x^k+t_kd^k$.

{\bf(iii)}  $t_k\ge \lambda$ and if in addition $\varepsilon_k\le r_k$, then $t_k\le 2\lambda$.

{\bf(iv)} $\norm{g^k-G_\lambda(x^k)}\le \varepsilon_k$ and $\dotproduct{G_\lambda(x^k),d^k}\le -\norm{d^k}^2.$
\end{Proposition}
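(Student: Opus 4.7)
The plan is to argue the four assertions in sequence, with (i) supplying the key subgradient-type bound from which (iv) follows, and (ii)--(iii) being essentially algebra on the update formula.

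For (i), the starting point is Remark~\ref{remark weakly descent}(iii): since $g$ is $\varrho$-weakly convex and $\lambda<(L+\varrho)^{-1}<\varrho^{-1}$, the function $\Phi_{\lambda,F_\lambda(x^k)}(\cdot)=g(\cdot)+(2\lambda)^{-1}\|\cdot-F_\lambda(x^k)\|^2$ is strongly convex with modulus $\lambda^{-1}-\varrho>0$. Its unique minimizer is $T_\lambda(x^k)$ by \eqref{T and fermat}. The standard strong-convexity inequality at a minimizer then gives
\begin{align*}
\Phi_{\lambda,F_\lambda(x^k)}(p^k)-\Phi_{\lambda,F_\lambda(x^k)}(T_\lambda(x^k))\ge\tfrac{\lambda^{-1}-\varrho}{2}\|p^k-T_\lambda(x^k)\|^2.
\end{align*}
Combining this with the two upper bounds in \eqref{subproblem} yields both estimates: using $\rho_k$ directly gives the $\sqrt{2\rho_k/(\lambda^{-1}-\varrho)}$ bound, while using $\tfrac{\lambda(1-\lambda\varrho)\varepsilon_k^2}{2}$ and rewriting $1-\lambda\varrho=\lambda(\lambda^{-1}-\varrho)$ collapses the factor to $\lambda^2\varepsilon_k^2$, giving the $\lambda\varepsilon_k$ bound. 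Taking the minimum yields (i).

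For (ii), I would split on whether $d^k=0$. In the null case the algorithm sets $x^{k+1}=x^k$, matching $x^k+t_k\cdot 0$. In the non-null case the explicit formulas for $d^k$ and $t_k$ in \eqref{1.8} combine to give $t_kd^k=-\lambda g^k=p^k-x^k$, using $g^k=\lambda^{-1}(x^k-p^k)$; hence $x^k+t_kd^k=p^k=x^{k+1}$. For (iii), the null case is trivial ($t_k=2\lambda$). In the non-null case $\|d^k\|=\|g^k\|-\varepsilon_k$, so $t_k=\lambda\|g^k\|/(\|g^k\|-\varepsilon_k)$; since $\|g^k\|>r_k+\varepsilon_k\ge\varepsilon_k$, the ratio exceeds $1$, yielding $t_k>\lambda$. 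For the upper bound, $t_k\le 2\lambda$ is equivalent to $2\varepsilon_k\le\|g^k\|$, which follows from $\|g^k\|>r_k+\varepsilon_k\ge 2\varepsilon_k$ under the hypothesis $\varepsilon_k\le r_k$.

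For (iv), the first inequality is a one-line consequence of (i): $g^k-G_\lambda(x^k)=\lambda^{-1}(T_\lambda(x^k)-p^k)$, so $\|g^k-G_\lambda(x^k)\|\le\lambda^{-1}\cdot\lambda\varepsilon_k=\varepsilon_k$. For the inner-product inequality, I would exploit the projection interpretation $-d^k={\rm Proj}(0,\mathbb{B}(g^k,\varepsilon_k))$: the variational characterization of the projection gives $\dotproduct{d^k,v+d^k}\le 0$, equivalently $\dotproduct{d^k,v}\le-\|d^k\|^2$, for every $v\in\mathbb{B}(g^k,\varepsilon_k)$. Since the first part of (iv) places $G_\lambda(x^k)$ in that ball, setting $v:=G_\lambda(x^k)$ finishes the non-null case, and the null case is trivial since both sides vanish.

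The only delicate point, and thus the main calculation to be careful with, is the algebraic simplification inside (i) that turns the $\tfrac{\lambda(1-\lambda\varrho)\varepsilon_k^2}{2}$ bound into exactly $\lambda\varepsilon_k$; the rest of the argument is essentially bookkeeping around the definitions of $d^k$, $t_k$, $g^k$, and $G_\lambda$.
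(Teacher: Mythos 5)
Your proposal is correct and follows essentially the same route as the paper: strong convexity of $\Phi_{\lambda,F_\lambda(x^k)}$ with modulus $\lambda^{-1}-\varrho$ for (i) (including the identity $\lambda(1-\lambda\varrho)=\lambda^2(\lambda^{-1}-\varrho)$ that produces the $\lambda\varepsilon_k$ bound), direct algebra on \eqref{1.8} and $g^k=\lambda^{-1}(x^k-p^k)$ for (ii)--(iii), and the variational characterization of the projection of $0$ onto $\mathbb{B}(g^k,\varepsilon_k)$ together with $G_\lambda(x^k)\in\mathbb{B}(g^k,\varepsilon_k)$ for (iv). No gaps; your spelled-out projection inequality $\dotproduct{d^k,v+d^k}\le 0$ is just a more explicit version of what the paper leaves implicit.
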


\begin{proof}
By Remark~\ref{remark weakly descent}(iii), the function $\Phi_{\lambda,F_\lambda(x^k)}$ is strongly convex with constant $\lambda^{-1}-\varrho$. Using \cite[Corollary~3.2.3]{nesterovbook} gives us the estimate
\begin{align*}
\Phi_{\lambda,F_\lambda(x^k)}(y)-\Phi_{\lambda,F_\lambda(x^k)}(T_\lambda(x^k))\ge \dfrac{\lambda^{-1}-\varrho}{2}\norm{y-T_\lambda(x^k)}^2\text{ for all }y\in\R^n.
\end{align*}
Then the choice of $p^k$ in \eqref{subproblem} yields
\begin{align*}
\dfrac{\lambda^{-1}-\varrho}{2}\norm{p^k-T_\lambda(x^k)}^2\le\min\set{ \dfrac{\varepsilon_k^2\lambda^2(\lambda^{-1}-\varrho)}{2},\rho_k}.
\end{align*}
Dividing both sides by $ \dfrac{\lambda^{-1}-\varrho}{2}>0$ and then taking the square root verify assertion (i).

The two assertions (ii) and (iii) obviously hold if $d^k=0$. Otherwise,  $\norm{g^k}>r_k+\varepsilon_k$ and $\norm{d^k}=\norm{g^k}-\varepsilon_k$ from \eqref{1.8}. By using $g^k=\lambda^{-1}(x^k-p^k)$ in Step~\ref{Step 1}, we get
\begin{align*}
t_kd^k=-\lambda \dfrac{\norm{g^k}}{\norm{d^k}} \cdot\dfrac{\norm{g^k}-\varepsilon_k}{\norm{g^k}}g^k=-\lambda g^k=p^k-x^k.
\end{align*}
Combining this with $x^{k+1}=p^k$ from Step~\ref{Step 2} yields $x^k+t_kd^k=p^k=x^{k+1}$, which verifies (ii). Moreover,
\begin{align*}
t_k=\lambda \dfrac{\norm{g^k}}{\norm{d^k}}=\lambda \dfrac{\norm{g^k}}{\norm{g^k}-\varepsilon_k}\ge \lambda.
\end{align*}
Now taking into account that $\varepsilon_k\le r_k$ brings us to
\begin{align*}
\norm{d^k}=\norm{g^k}-\varepsilon_k>r_k+\varepsilon_k-\varepsilon_k=r_k\ge \varepsilon_k,
\end{align*}
which in turn implies that
\begin{align*}
t_k=\lambda\dfrac{\norm{g^k}}{\norm{d^k}}=\lambda\dfrac{\norm{d^k}+\varepsilon_k}{\norm{d^k}}= \lambda\brac{1+\dfrac{\varepsilon_k}{\norm{d^k}}}\le 2\lambda
\end{align*}
and therefore justifies assertion (iii).

By (i) and the constructions of $g^k$ in Step \ref{Step 1} as well as $G_\lambda(x^k)$ in \eqref{def G lambda}, we get 
\begin{align*}
\norm{g^k-G_\lambda(x^k)}=\norm{\lambda^{-1}(x^k-p^k)-\lambda^{-1}(x^k-T_\lambda(x^k))}= \lambda^{-1}\norm{p^k-T_\lambda(x^k)}\le \varepsilon_k,
\end{align*}
which verifies the first assertion in (iv). The second assertion therein in the case where $d^k\ne 0$ follows from the characterization of projections by noting that $d^k=-{\rm Proj}(0|\mathbb{B}(g^k,\varepsilon_k))$ and $G_\lambda(x^k)\in \mathbb{B}(g^k,\varepsilon_k)$. This completes the proof of the proposition.
\end{proof}

The following result addresses the inexact calculation of proximal points in 
Step~\ref{Step 1} of Algorithm \ref{pgm}.

\begin{Proposition}\label{prop: algorithm}
For any $k\in\N$, there are $\xi_k\in \partial_{\rho_k,\varrho}g(p^k)$ and $e^k\in \R^n$ with $\norm{e^k}\le \sqrt{2\lambda\rho_k}$ and
\begin{align*}
0=\xi_k+\lambda^{-1}(p^k-F_\lambda(x^k)+e^k).
\end{align*}
\end{Proposition}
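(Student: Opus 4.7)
The plan is to exploit the (strong) convexity of the subproblem objective $\Phi_{\lambda,F_\lambda(x^k)}$, extract a convex $\varepsilon$-subgradient inclusion at $p^k$ from its approximate optimality, and then split this inclusion via the sum rule for weak $\varepsilon$-subdifferentials in Proposition~\ref{prop 2.2}(ii).

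First, I would decompose $\Phi_{\lambda,F_\lambda(x^k)} = g + h$, where $h(y) := (2\lambda)^{-1}\|y-F_\lambda(x^k)\|^2$. Since $\lambda \in (0,(L+\varrho)^{-1}) \subset (0,\varrho^{-1})$, Remark~\ref{remark weakly descent}(iii) guarantees that $\Phi_{\lambda,F_\lambda(x^k)}$ is strongly convex with constant $\lambda^{-1}-\varrho > 0$, hence convex. The inexactness condition \eqref{subproblem} gives $\Phi_{\lambda,F_\lambda(x^k)}(p^k) \le \inf \Phi_{\lambda,F_\lambda(x^k)} + \rho_k$, which, by the standard convex-analytic characterization of approximate minimizers, is equivalent to $0 \in \partial_{\rho_k}\Phi_{\lambda,F_\lambda(x^k)}(p^k)$. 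Since the defining inequality of the convex $\varepsilon$-subdifferential implies that of the weak $\varepsilon$-subdifferential in Definition~\ref{approximate sub}, this yields $0 \in \partial_{\rho_k,\varrho}(g+h)(p^k)$.

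Next, I would apply the sum rule. Because $\dom h = \R^n$ and $\dom g$ is a nonempty convex set (as $g(\cdot)+(\varrho/2)\|\cdot\|^2$ is proper and convex), the relative-interior qualification ${\rm ri}(\dom g) \cap {\rm ri}(\dom h) \neq \emptyset$ holds trivially. Thus the equality version of Proposition~\ref{prop 2.2}(ii) with $\rho = 0$ gives
\begin{align*}
\partial_{\rho_k,\varrho}(g+h)(p^k) = \bigcup\big\{\partial_{\varepsilon_1,\varrho}g(p^k) + \partial_{\varepsilon_2}h(p^k) \;\big|\; \varepsilon_i \ge 0,\; \varepsilon_1+\varepsilon_2 \le \rho_k\big\},
\end{align*}
so there exist $\varepsilon_1,\varepsilon_2 \ge 0$ with $\varepsilon_1+\varepsilon_2 \le \rho_k$ together with $\xi_k \in \partial_{\varepsilon_1,\varrho}g(p^k) \subset \partial_{\rho_k,\varrho}g(p^k)$ and $\eta \in \partial_{\varepsilon_2}h(p^k)$ satisfying $\xi_k + \eta = 0$. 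Corollary~\ref{sub grad norm sq} applied to $h$ with reference point $F_\lambda(x^k)$ then produces $e^k$ with $\|e^k\|^2 \le 2\lambda\varepsilon_2 \le 2\lambda\rho_k$ such that $\eta = \lambda^{-1}(p^k - F_\lambda(x^k) + e^k)$, and substituting into $\xi_k + \eta = 0$ yields the claimed identity. The only conceptual steps beyond routine calculation are the translation of the primal accuracy bound into an approximate-subgradient inclusion and the check of the relative-interior condition for the equality sum rule; both are immediate in the present setting, so I do not anticipate a genuine obstacle.
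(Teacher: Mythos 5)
Your proposal is correct and follows essentially the same route as the paper: both pass from the approximate minimality in \eqref{subproblem} to $0\in\partial_{\rho_k}\Phi_{\lambda,F_\lambda(x^k)}(p^k)$, enlarge to the weak $\varepsilon$-subdifferential, split via Proposition~\ref{prop 2.2}(ii) with the decomposition $\Phi_{\lambda,F_\lambda(x^k)}=g+h$, and compute $\partial_{\rho_k}h(p^k)$ by Corollary~\ref{sub grad norm sq}. The only cosmetic difference is that you verify the relative-interior qualification explicitly and route through the union form of the sum rule rather than the consequence \eqref{sum rule eps}, which the paper invokes directly.
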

\begin{proof}
 Condition \eqref{subproblem} tells us that $p^k$ is a $\rho_k$-solution to the problem of minimizing the convex function $\Phi_{\lambda,F_\lambda(x^k)}$. It follows from \cite[Theorem~1.1.5]{lemarechalbook} that 
\begin{align}\label{0 in partial}
0&\in \partial_{\rho_k} \Phi_{\lambda,F_\lambda(x^k)}(p^k).
\end{align}
Let $h:\R^n\rightarrow\R$ be the convex quadratic function defined by $h(\cdot):=(2\lambda^{-1}\norm{\cdot-F_\lambda(x^k)}^2.$ It follows from \eqref{phi lambda} that $\Phi_{\lambda,F_\lambda(x^k)}(\cdot)=g(\cdot)+h(\cdot).$
By Remark~\ref{prop subset} and Proposition~\ref{prop 2.2}(ii), we have
\begin{align*}
&\partial_{\rho_k} \Phi_{\lambda,F_\lambda(x^k)}(p^k)\subset\partial_{\rho_k,\varrho} \Phi_{\lambda,F_\lambda(x^k)}(p^k)\\
&\subset {\partial}_{\rho_k,\varrho}g(p^k)+{\partial}_{\rho_k,0}h(p^k)={\partial}_{\rho_k,\varrho}g(p^k)+{\partial}_{\rho_k}h(p^k).
\end{align*}
Combining the latter with \eqref{0 in partial} lead us to the inclusion
\begin{align}\label{0 in separable}
0\in {\partial}_{\rho_k,\varrho}g(p^k)+{\partial}_{\rho_k}h(p^k).
\end{align}
Furthermore, it follows from Corollary~\ref{sub grad norm sq} that
\begin{align*}
{\partial}_{\rho_k}h(p^k)=\big\{\lambda^{-1}(p^k-F_\lambda(x^k)+e)\big| \norm{e}\le\sqrt{ 2\lambda \rho_k}\big\}.
\end{align*}
Then \eqref{0 in separable} yields the existence of some $\xi_k\in \partial_{\rho_k,\varrho}g(p^k)$ and $e^k\in\R^n$ with $\norm{e^k}\le \sqrt{2\lambda\rho_k}$ such that 
\begin{align*}
0=\xi_k+\lambda^{-1}(p^k-F_\lambda(x^k)+e^k),
\end{align*}
and thus we are done with the verification of this proposition.
\end{proof}

The next proposition is crucial in deriving convergence properties of 
Algorithm~\ref{pgm}.

\begin{Proposition}\label{prop dk goes 0}
At iteration $k^{\rm th}$ of Algorithm~\ref{pgm}, suppose that $\varepsilon_k\le r_k$. Then  we have the estimate
\begin{align}
C_1\norm{d^k}^2\le \varphi(x^k)-\varphi(x^{k+1})+C_2\sqrt{\rho_k}\norm{d^k}+\rho_k,
\end{align}\label{dir-est}
where the constants $C_1$ and $C_2$ are computed by
\begin{align}\label{def C1 C2}
C_1:=\lambda(1-\lambda(L+\varrho))\quad \text{ and }\quad C_2:=2\brac{\sqrt{\dfrac{2}{\lambda^{-1}-\varrho}}+\sqrt{2\lambda}}.
\end{align}
\end{Proposition}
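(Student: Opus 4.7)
The plan is to reduce everything to a single algebraic estimate that combines a weak subgradient inequality for $g$ with the one-sided descent condition for $f$. First dispose of the easy null-iteration case: if $d^k=0$, then $x^{k+1}=x^k$ and the claim collapses to $0\le\rho_k$. So assume $d^k\neq 0$; then $x^{k+1}=p^k$, $\|g^k\|>r_k+\varepsilon_k$, and the hypothesis $\varepsilon_k\le r_k$ forces $\varepsilon_k<\|d^k\|$. Using $\|p^k-x^k\|=\lambda\|g^k\|=\lambda(\|d^k\|+\varepsilon_k)$ yields the two-sided bridge
\[
\lambda\|d^k\|\;\le\;\|p^k-x^k\|\;\le\;2\lambda\|d^k\|,
\]
which is the only quantitative link needed between $\|p^k-x^k\|$ and $\|d^k\|$.

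The heart of the argument converts Proposition \ref{prop: algorithm} into a descent bound on $\varphi$. From that proposition I would select $\xi_k\in\partial_{\rho_k,\varrho}g(p^k)$ and $e^k$ with $\|e^k\|\le\sqrt{2\lambda\rho_k}$ satisfying $\xi_k+\nabla f(x^k)=-\lambda^{-1}(p^k-x^k+e^k)$. Evaluating the defining weak-subgradient inequality \eqref{weak-sub} at $y=x^k$ lower-bounds $g(x^k)-g(p^k)$; the descent condition \eqref{descent condition} applied to the pair $(p^k,x^k)$ lower-bounds $f(x^k)-f(p^k)$. Adding the two estimates and substituting the explicit formula for $\xi_k+\nabla f(x^k)$ collapses the resulting inner product into $\lambda^{-1}\|p^k-x^k\|^2+\lambda^{-1}\langle e^k,p^k-x^k\rangle$, yielding
\[
\varphi(x^k)-\varphi(p^k)\;\ge\;\Bigl(\lambda^{-1}-\tfrac{L+\varrho}{2}\Bigr)\|p^k-x^k\|^2+\lambda^{-1}\langle e^k,p^k-x^k\rangle-\rho_k.
\]

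The remaining steps are routine. Cauchy-Schwarz with $\|e^k\|\le\sqrt{2\lambda\rho_k}$ together with $\|p^k-x^k\|\le 2\lambda\|d^k\|$ gives $|\lambda^{-1}\langle e^k,p^k-x^k\rangle|\le 2\sqrt{2\lambda}\,\sqrt{\rho_k}\,\|d^k\|$. For the quadratic coefficient, the initialization choice $\lambda<(L+\varrho)^{-1}$ ensures $\lambda^{-1}-(L+\varrho)/2\ge\lambda^{-1}-(L+\varrho)>0$, and combining with $\|p^k-x^k\|\ge\lambda\|d^k\|$ produces $(\lambda^{-1}-(L+\varrho)/2)\|p^k-x^k\|^2\ge(\lambda^{-1}-(L+\varrho))\lambda^2\|d^k\|^2=C_1\|d^k\|^2$. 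Rearranging yields the claim with the coefficient $2\sqrt{2\lambda}$ in front of $\sqrt{\rho_k}\|d^k\|$; since $2\sqrt{2\lambda}\le C_2=2\bigl(\sqrt{2/(\lambda^{-1}-\varrho)}+\sqrt{2\lambda}\bigr)$, the stated inequality follows a fortiori.

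The step that I expect to require the most care is the sign bookkeeping in the identity $\xi_k+\nabla f(x^k)=-\lambda^{-1}(p^k-x^k+e^k)$: the expansion $-\langle\xi_k+\nabla f(x^k),\,p^k-x^k\rangle=\lambda^{-1}\|p^k-x^k\|^2+\lambda^{-1}\langle e^k,p^k-x^k\rangle$ must produce a \emph{positive} square that partially cancels the $(L+\varrho)/2$ slack coming from descent plus weak convexity; once this algebraic collapse is done correctly, everything else reduces to Cauchy-Schwarz and the two-sided size bound on $\|p^k-x^k\|$ established in the first paragraph.
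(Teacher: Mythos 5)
Your proof is correct, and it takes a genuinely different and in fact more economical route than the paper's. The paper's argument passes through the \emph{exact} proximal point $T_\lambda(x^k)$: it invokes the generalized Fermat rule to produce an exact subgradient $\xi_\lambda(x^k)\in\partial g(T_\lambda(x^k))$, splits the key inner product as $\langle\xi_k,\cdot\rangle+\langle\xi_\lambda(x^k)-\xi_k,\cdot\rangle$, and controls the difference term via the strong-convexity bound $\|p^k-T_\lambda(x^k)\|\le\sqrt{2\rho_k/(\lambda^{-1}-\varrho)}$ of Proposition~\ref{tk < 2 lambda}(i) --- which is precisely where the summand $\sqrt{2/(\lambda^{-1}-\varrho)}$ in $C_2$ comes from. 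You bypass $T_\lambda(x^k)$ and $\xi_\lambda(x^k)$ entirely: the approximate stationarity identity $\xi_k+\nabla f(x^k)=-\lambda^{-1}(p^k-x^k+e^k)$ from Proposition~\ref{prop: algorithm}, combined with the weak $\rho_k$-subgradient inequality at $x^k$ and the descent condition for the pair $(p^k,x^k)$, collapses directly into a descent estimate for $\varphi$. The bookkeeping checks out: the sign of $\lambda^{-1}\|p^k-x^k\|^2$ is indeed positive, the bridge $\lambda\|d^k\|\le\|p^k-x^k\|\le 2\lambda\|d^k\|$ follows from $\|g^k\|=\|d^k\|+\varepsilon_k$ and $\varepsilon_k\le r_k<\|d^k\|$, and $(\lambda^{-1}-(L+\varrho)/2)\lambda^2\ge C_1$ uses only $\lambda<(L+\varrho)^{-1}$ from Step~0 of Algorithm~\ref{pgm}. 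What your approach buys is a shorter proof and a strictly better constant, $2\sqrt{2\lambda}$ in place of $C_2$, so the stated inequality holds a fortiori; what it gives up is nothing essential here, although the paper's detour through $T_\lambda(x^k)$ mirrors the structure used elsewhere (e.g.\ in Proposition~\ref{properties surrogate}) where the exact proximal point is genuinely needed. The only point worth making explicit is that $p^k\in\dom g$, which is guaranteed by \eqref{subproblem}, so that $\partial_{\rho_k,\varrho}g(p^k)$ is well defined.
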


\begin{proof}
Estimate \eqref{dir-est} obviously holds at null iterations, and so we suppose that $k\notin\mathcal{N}$. It follows from \eqref{argmin subproblem}  the composite gradient mapping representation
\begin{align*}
T_\lambda(x^k)=\mathop{\rm argmin}_{y\in\R^n}\big\{f(x^k)+\dotproduct{\nabla f(x^k),y-x^k}+(2\lambda)^{-1}\norm{y-x^k}^2+g(y)\big\}.
\end{align*}
Applying the generalized Fermat rule to this problem, we find $\xi_\lambda(x^k)\in \partial g(T_\lambda(x^k))$ such that 
\begin{align}\label{def xi k}
0=\nabla f(x^k)+\lambda^{-1}(T_\lambda(x^k)-x^k)+\xi_\lambda(x^k)=\xi_\lambda(x^k)+\lambda^{-1}(T_\lambda(x^k)-F_\lambda(x^k)).
\end{align}
By Proposition~\ref{prop: algorithm}, there exist $\xi_k\in \partial_{\rho_k,\varrho}g(p^k)$ and $e^k\in \R^n$ with $\norm{e^k}\le \sqrt{2\lambda\rho_k}$ and
\begin{align}\label{xi delta k}
 0=\xi_k+\lambda^{-1}(p^k-F_\lambda(x^k)+e^k).
\end{align}
It follows from \eqref{xi delta k}, the descent property of $f$ with constant $L$, and the first equality in \eqref{def xi k} that
\begin{align}
\varphi (x^{k+1})&\overset{\eqref{descent condition}}{=} f(x^{k+1})+g(x^{k+1})\le  f(x^k)+ t_k\dotproduct{\nabla f(x^k),d^k}+\dfrac{Lt_k^2}{2}\norm{d^k}^2+ g(x^{k+1})\nonumber\\
&\overset{\eqref{def xi k}}{=} f(x^k)+t_k\dotproduct{\dfrac{1}{\lambda}(x^k-T_\lambda(x^k))-\xi_\lambda(x^k),d^k}+\dfrac{Lt_k^2}{2}\norm{d^k}^2+ g(x^{k+1})\nonumber\\
&\overset{\eqref{def G lambda}}{=} f(x^k)+t_k\dotproduct{G_\lambda(x^k),d^k}+\dfrac{Lt_k^2}{2}\norm{d^k}^2+ g(x^{k+1})-t_k\dotproduct{\xi_\lambda(x^k),d^k}\nonumber\\
&\le  \brac{f(x^k)-t_k \norm{d^k}^2+\dfrac{Lt_k^2}{2}\norm{d^k}^2}+ \brac{g(x^{k+1})+\dotproduct{\xi_\lambda(x^k),x^k-x^{k+1}}},\label{ine 1}
\end{align}
where the last inequality is implied by Proposition~\ref{tk < 2 lambda}(ii,iv). The second term on the right hand side in the latter inequality can be expressed as
\begin{align}\label{ine 2}
g(x^{k+1})+\dotproduct{\xi_\lambda(x^k),x^k-x^{k+1}}&=g(x^{k+1})+\dotproduct{\xi_k,x^k-x^{k+1}}+\dotproduct{\xi_\lambda(x^k)-\xi_k,x^k-x^{k+1}}.
\end{align}
Since $k\notin\mathcal{N},$ Step \ref{Step 2} gives us $x^{k+1}=p^k.$
Using the definition of approximate subdifferential for weakly convex functions \eqref{approximate sub} with taking into account $\xi_k\in \partial_{\rho_k,\varrho}g(x^{k+1})$ gives us 
\begin{align}\label{ine 3}
g(x^{k+1})+\dotproduct{\xi_k,x^k-x^{k+1}}&\le  g(x^k)+\dfrac{\varrho}{2}\norm{x^k-x^{k+1}}^2+\rho_k\nonumber\\
&=g(x^k)+\dfrac{\varrho t_k^2}{2}\norm{d^k}^2+\rho_k.
\end{align}
Combining the equalities in \eqref{def xi k}, \eqref{xi delta k} with $\norm{e^k}\le \sqrt{2\lambda\rho_k}$ and Proposition~\ref{tk < 2 lambda}(i), we get
\begin{align}\label{ine 4}
&\dotproduct{\xi_\lambda(x^k)-\xi_k,x^k-x^{k+1}}=\lambda^{-1}\dotproduct{p^k+e^k-T_{\lambda}(x^k),x^k-x^{k+1}}\nonumber\\
&\le\lambda^{-1}\norm{p^k-T_\lambda(x^k)+e^k}\cdot\norm{x^k-x^{k+1}\le\lambda^{-1}t_k}\brac{\norm{p^k-T_\lambda(x^k)}+\norm{e^k}}\cdot\norm{d^k}\nonumber\\
&\le \dfrac{t_k}{\lambda}\brac{\sqrt{\dfrac{2\rho_k}{\lambda^{-1}-\varrho}}+\sqrt{2\lambda\rho_k}}\norm{d^k}.
\end{align}
It follows from \eqref{ine 2}--\eqref{ine 4} the fulfillment of the estimate
\begin{align}\label{ine 6}
g(x^{k+1})+\dotproduct{\xi_\lambda(x^k),x^k-x^{k+1}}\le g(x^k)+\dfrac{\varrho t_k^2}{2}\norm{d^k}^2+\rho_k+\dfrac{t_k}{\lambda}\brac{\sqrt{\dfrac{2\rho_k}{\lambda^{-1}-\varrho}}+\sqrt{2\lambda\rho_k} }\norm{d^k}.
\end{align}
Since $\varepsilon_k\le r_k$, Proposition~\ref{tk < 2 lambda}(iii) tells us $t_k\in [\lambda,2\lambda]$. Combining \eqref{ine 6} with \eqref{ine 1}, $t_k\le 2\lambda$, and $\varphi(x^k)=f(x^k)+g(x^k)$ ensures that
\begin{align}\label{ine 5}
\varphi(x^{k+1})&\le \varphi(x^k)-t_k\norm{d^k}^2+\dfrac{(L+\varrho)t_k^2}{2}\norm{d^k}^2+\rho_k+\dfrac{t_k}{\lambda}\brac{\sqrt{\dfrac{2\rho_k}{\lambda^{-1}-\varrho}}+\sqrt{2\lambda\rho_k} }\norm{d^k}\nonumber\\
&\le \varphi(x^k)-\brac{t_k-\dfrac{(L+\varrho)t_k^2}{2}}\norm{d^k}^2+C_2\sqrt{\rho_k}\norm{d^k}+\rho_k.
\end{align}
By using $t_k\in [\lambda,2\lambda]$, we also arrive at
\begin{align*}
\brac{t_k-\dfrac{(L+\varrho)t_k^2}{2}}= t_k\brac{1-\dfrac{(L+\varrho)t_k}{2}}\ge \lambda (1-\lambda(L+\varrho))=C_1.
\end{align*}
Finally, applying the latter to \eqref{ine 5} leads us to 
\begin{align*}
\varphi(x^{k+1})&\le\varphi(x^k)-C_1\norm{d^k}^2+C_2\sqrt{\rho_k}\norm{d^k}+\rho_k,
\end{align*}
which is claimed in the proposition.
\end{proof}

The following major result for our IPGM considers two types of errors in Algorithm~\ref{pgm} and shows that the algorithm generates stationary accumulation points and converges to isolated stationary point.

\begin{Theorem}\label{stationarity of pgm}
Let $C_1,C_2$ be defined in \eqref{def C1 C2}, and let $\set{x^k}$ be the sequence generated by Algorithm~\ref{pgm} such that $\varepsilon_1\le r_1$, $\theta\le \mu$,
and either
\begin{align}
\sum_{k=1}^\infty \rho_k<\infty,\quad\text{ or }\quad \rho_k=C\varepsilon_k^2 \quad\text{ for all }\;k\in\N,
\end{align}
where $C=\min\set{\dfrac{C_1^2}{4C_2^2},\dfrac{C_1}{4}}$. Assuming that b$\inf_{k\in\N}\varphi(x^k)>-\infty$, we have the assertions:

{\bf(i)} $\varepsilon_k,r_k\downarrow0$ as $k\to\infty$.

{\bf(ii)} Every accumulation point of $\set{x^k}$ is a stationary point of $\varphi$ and $\norm{x^k-x^{k+1}}\rightarrow0$ as $k\to\infty$.

{\bf(iii)} If the sequence $\set{x^k}$ is bounded, then the set of accumulation points of $\set{x^k}$ is nonempty, compact, and connected.

{\bf(iv)} If $\set{x^k}$ has an isolated accumulation point, then the entire sequence $\set{x^k}$ converges to it.
\end{Theorem}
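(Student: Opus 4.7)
The plan is to reduce Algorithm~\ref{pgm} to Algorithm~\ref{scheme continuous mapping} applied with $G := G_\lambda$ (which is continuous by Remark~\ref{prop: prox property}), verify the two hypotheses of Theorem~\ref{corollary 2.6} (bounded stepsizes and $\sum_k \|d^k\|^2 < \infty$), and then invoke Theorem~\ref{corollary 2.6} plus the stationarity equivalence from Remark~\ref{prop: prox property} to read off (i)--(iv).

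First I would observe that the standing hypotheses $\varepsilon_1 \le r_1$ and $\theta \le \mu$ propagate inductively: at null iterations one has $r_{k+1}=\mu r_k$ and $\varepsilon_{k+1}=\theta\varepsilon_k$, while at non-null iterations both radii are unchanged. Hence $\varepsilon_k \le r_k$ for every $k$. This lets me apply Proposition~\ref{tk < 2 lambda}(iii) to get $t_k \in [\lambda, 2\lambda]$ for all $k$, so the stepsizes are uniformly bounded, and it also gives the bound $\|g^k - G_\lambda(x^k)\| \le \varepsilon_k$ from part~(iv), which together with the $d^k, t_k$ formulas in~\eqref{1.8} identifies $\{x^k\}$ as the iterate sequence produced by Algorithm~\ref{scheme continuous mapping} run on $G_\lambda$.

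The main obstacle is proving $\sum_k \|d^k\|^2 < \infty$, and this is where the two error regimes branch. In both cases I start from Proposition~\ref{prop dk goes 0}:
\begin{equation*}
C_1 \|d^k\|^2 \le \varphi(x^k) - \varphi(x^{k+1}) + C_2 \sqrt{\rho_k}\,\|d^k\| + \rho_k,
\end{equation*}
which is valid for every $k$ since $\varepsilon_k \le r_k$. In the \emph{summable} case $\sum_k \rho_k < \infty$, I apply Young's inequality $C_2\sqrt{\rho_k}\,\|d^k\| \le \tfrac{C_1}{2}\|d^k\|^2 + \tfrac{C_2^2}{2C_1}\rho_k$ to absorb the cross term, obtaining $\tfrac{C_1}{2}\|d^k\|^2 \le \varphi(x^k)-\varphi(x^{k+1}) + \bigl(1+\tfrac{C_2^2}{2C_1}\bigr)\rho_k$, which telescopes using $\inf_k \varphi(x^k) > -\infty$. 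In the \emph{proportional} case $\rho_k = C\varepsilon_k^2$ with $C = \min\{C_1^2/(4C_2^2),\,C_1/4\}$, summability of $\rho_k$ is not automatic, so instead I use that at any non-null iteration Remark~\ref{null remark 2}(iii) gives $\|d^k\| > r_k \ge \varepsilon_k$, hence $\sqrt{\rho_k} = \sqrt{C}\,\varepsilon_k \le \sqrt{C}\,\|d^k\|$ and $\rho_k \le C \|d^k\|^2$. The choice of $C$ yields $C_2\sqrt{C} \le C_1/2$ and $C \le C_1/4$, so the right-hand side becomes $\varphi(x^k)-\varphi(x^{k+1}) + \tfrac{3C_1}{4}\|d^k\|^2$, producing $\tfrac{C_1}{4}\|d^k\|^2 \le \varphi(x^k)-\varphi(x^{k+1})$; at null iterations $d^k = 0$ makes the inequality trivial. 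Telescoping once more with $\inf_k \varphi(x^k) > -\infty$ gives the square-summability in this case as well.

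With $\{t_k\}$ bounded and $\sum_k \|d^k\|^2 < \infty$ established, Theorem~\ref{corollary 2.6} applies directly to the identification made above and yields: $\varepsilon_k, r_k \downarrow 0$; every accumulation point $\bar x$ of $\{x^k\}$ satisfies $G_\lambda(\bar x) = 0$; $\|x^{k+1}-x^k\|\to 0$; the set of accumulation points is nonempty, compact and connected when $\{x^k\}$ is bounded; and an isolated accumulation point attracts the whole sequence. Finally, Remark~\ref{prop: prox property} converts $G_\lambda(\bar x)=0$ into $\bar x = T_\lambda(\bar x)$ and then into $0 \in \partial \varphi(\bar x)$, so every accumulation point is stationary for $\varphi$, completing (i)--(iv).
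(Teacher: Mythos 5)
Your proposal is correct and follows essentially the same route as the paper: reduce to Algorithm~\ref{scheme continuous mapping} with $G=G_\lambda$ via Proposition~\ref{tk < 2 lambda}, establish $\sum_k\|d^k\|^2<\infty$ from the descent estimate of Proposition~\ref{prop dk goes 0} in the same two error regimes, and conclude via Theorem~\ref{corollary 2.6} and Remark~\ref{prop: prox property}. The only (harmless) deviation is in the summable case, where you absorb the cross term pointwise by Young's inequality instead of the paper's Cauchy--Schwarz-on-partial-sums followed by completing the square; both yield the required square-summability.
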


\begin{proof}
It follows from Proposition~\ref{tk < 2 lambda} that $\set{x^k}$ is indeed a sequence generated by Algorithm~\ref{scheme continuous mapping} for finding zeros of the continuous mapping $G_\lambda$. By Theorem~\ref{corollary 2.6}, it suffices to show that  ${\sum_{k=1}^\infty}\norm{d^k}^2<\infty$ to get all the assertions in (i)-(iv). The conditions $\varepsilon_1\le r_1$ and $\theta\le \mu$ yield $\varepsilon_k\le r_k$, $k\in\N$. We consider the two choices for $\set{\rho_k}$. Suppose first that $\sum_{k=1}^\infty \rho_k<\infty$. It follows from Proposition \ref{prop dk goes 0} that 
\begin{align}\label{C1 C2}
C_1\norm{d^k}^2\le \varphi(x^k)-\varphi(x^{k+1})+C_2\sqrt{\rho_k}\norm{d^k}+\rho_k.
\end{align}
Summing up both sides of the latter inequality over $k=1,2,\ldots$ gives us
\begin{align*}
C_1\sum_{k=1}^m \norm{d^k}^2\le \varphi(x^1)-\varphi(x^{m+1})+C_2 \sum_{k=1}^m \sqrt{\rho_k}\norm{d^k}+\sum_{k=1}^m\rho_k\;\text{ for all }\;m\in\N.
\end{align*}
Since $\inf_{k\in\N}\varphi(x^k)>-\infty$ and $\sum_{k=1}^\infty \rho_k<\infty$, we get
\begin{align*}
C_1\sum_{k=1}^m \norm{d^k}^2\le C_3+C_2 \sum_{k=1}^m \sqrt{\rho_k}\norm{d^k}\;\text{ for all }\;m\in \N,
\end{align*}
where $C_3:=\varphi(x^1)-\inf_{k\in\N}\varphi(x^k) +\sum_{k=1}^\infty\rho_k<\infty$.
It implies by the Cauchy-Schwarz inequality that  
\begin{align*}
&C_1\sum_{k=1}^m\norm{d^k}^2\le C_3+C_2\sqrt{\sum_{k=1}^m \rho_k}\sqrt{\sum_{k=1}^m \norm{d^k}^2}\\
&\le C_3+C_2\sqrt{\sum_{k=1}^\infty \rho_k}\sqrt{\sum_{k=1}^m \norm{d^k}^2}=C_3+C_4\sqrt{\sum_{k=1}^m \norm{d^k}^2},
\end{align*}
where $C_4:=C_2\sqrt{\sum_{k=1}^\infty \rho_k}$. Therefore, we arrive at
\begin{align*}
C_1\brac{\sqrt{\sum_{k=1}^m \norm{d^k}^2}-\dfrac{C_4}{2C_1}}^2-\dfrac{C_4^2}{4C_1}=C_1\sum_{k=1}^m \norm{d^k}^2-C_4\sqrt{\sum_{k=1}^m \norm{d^k}^2}\le C_3\;\text{ for all }\;m\in\N,
\end{align*}
which gives us the desired condition $\sum_{k=1}^\infty \norm{d^k}^2<\infty$.\\
Suppose now that $\rho_k=\min\set{\dfrac{C_1^2}{4C_2^2},\dfrac{C_1}{4}}\varepsilon_k^2$ for any $k\in \N$ and show that \eqref{C1 C2} yields
\begin{align}\label{descent varphi0}
\dfrac{C_1}{4}\norm{d^k}^2\le \varphi(x^k)-\varphi(x^{k+1})\;\text{ whenever }\;k\in\N.
\end{align}
If $k\in\mathcal{N}$, inequality \eqref{descent varphi0} is obviously satisfied since both sides are zeros. Otherwise, we get 
\begin{align*}
\norm{d^k}=\norm{g^k}-\varepsilon_k> r_k+\varepsilon_k-\varepsilon_k=r_k\ge \varepsilon_k.
\end{align*}
Therefore, the choice of $\set{\rho_k}$ leads us to the inequalities
\begin{align*}
C_2\sqrt{\rho_k}\norm{d^k}+\rho_k\le C_2\dfrac{C_1}{2C_2}\varepsilon_k\norm{d^k}+\dfrac{C_1}{4}\varepsilon_k^2\le \dfrac{3C_1}{4}\norm{d^k}^2.
\end{align*}
Combining this with \eqref{C1 C2}, we arrive at
\begin{align*}
C_1\norm{d^k}^2\le  \varphi(x^k)-\varphi(x^{k+1})+\dfrac{3C_1}{4}\norm{d^k}^2,
\end{align*}
which implies in turn the fulfillment of \eqref{descent varphi0}. Summing up both sides of \eqref{descent varphi0} over $k=1,2,\ldots$ with taking into account that $\inf_{k\in\N}\varphi(x^k)>-\infty$ verifies $\sum_{k=1}^\infty\norm{d^k}^2<\infty$ and completes the proof.
\end{proof}

\begin{Remark}\rm\label{remark descent}
The choice of and arbitrary sequence $\set{\rho_k}$ satisfying $\sum_{k=1}^\infty\rho_k<\infty$ may not ensure the decreasing property of $\set{\varphi(x^k)}.$ However, if we select $\rho_k:=C\varepsilon_k^2$ for all $k\in\N$ with $C:=\min\set{\dfrac{C_1^2}{4C_2^2},\dfrac{C_1}{4}}$, the sequence $\set{\varphi(x^k)}$ {\em is decreasing} due to
\begin{align}\label{descent varphi}
\dfrac{C_1}{4}\norm{d^k}^2\le \varphi(x^k)-\varphi(x^{k+1})\;\text{ for all }\;k\in\N.
\end{align}
\end{Remark}\vspace*{0.05in}

Our next goal is to establish the {\em global convergence} of the sequence $\set{x^k}$ generated by Algorithm~\ref{pgm}. First we recall the notion of {\em forward-backward envelopes} introduced by Patrinos and Bemporad \cite{pb}  and then being largely used to the design and justification of numerical methods in various composite frameworks of optimization; see, e.g., \cite{BorisKhanhPhat,kmptjogo,kmptmp,stp,themelis18} with the references therein.

\begin{Definition}[\bf forward-backward envelopes] \rm Let $\varphi=f+g$ be given as in \eqref{eq:composite} and let $\lambda\in(0,\varrho^{-1})$. The {\em forward-backward envelope} (FBE) of $\varphi$ is given by
\begin{equation*}
\varphi_\lambda(x):= \inf_{y\in\R^n}\big\{f(x)+\dotproduct{\nabla f(x),y-x}+(2\lambda)^{-1}\norm{y-x}^2+g(y)\big\}.
\end{equation*}
\end{Definition}
The FBE $\varphi_\lambda$ is majorized by $\varphi$, i.e., $\varphi_\lambda(x)\le \varphi(x)$ for all $x\in\R^n$. This definition can be rewritten via the Moreau envelope \eqref{Moreau} of $g$  as
\begin{align*}
\varphi_\lambda(x)
&=f(x) - \frac{\lambda}{2}\|\nabla f(x)\|^2+ \inf_{y\in\R^n}\big\{g(y)+(2\lambda)^{-1}\norm{y-(x-\lambda \nabla f(x))}^2\big\}\\
&=f(x)-\dfrac{\lambda}{2}\norm{\nabla f(x)}^2+\inf_{y\in\R^n} \Phi_{\lambda,F_\lambda(x)}(y)\\
&= f(x) - \frac{\lambda}{2}\|\nabla f(x)\|^2+ e_\lambda g(x-\lambda\nabla f(x)).
\end{align*}
When $f$ is $\mathcal{C}^2$-smooth, it follows from the latter equality and Proposition~\ref{weakly and descent} that the FBE $\varphi_\lambda$ of $\varphi$ is $\mathcal{C}^1$-smooth with the gradient given by
\begin{align}\label{eq:grad FBE}
\nabla \varphi_\lambda(x) &= \lambda^{-1}(I-\lambda \nabla^2 f(x))(x- \text{\rm Prox}_{\lambda g}(x-\lambda \nabla f(x)))\nonumber\\
&=\lambda^{-1}(I-\lambda\nabla^2f(x))(x-T_\lambda(x)).
\end{align}
\noindent Consider the following approximation of the objective function $\varphi$:
\begin{align}\label{def surrogate}
\begin{cases}
\mathcal{F}_\lambda:\R^n\times \R\rightarrow\R,\\
\mathcal{F}_\lambda(x,\varepsilon)=\varphi_\lambda(x)+\mathscr{C}\varepsilon^2\text{ for all }x\in \R^n,\;\varepsilon\in\R,
\end{cases}
\end{align}
where $\mathscr{C}=\min\set{\dfrac{\lambda(1-\lambda\varrho)}{2},\dfrac{C_1^2}{4C_2^2},\dfrac{C_1}{4}}$ with $C_1,C_2$ defined in \eqref{def C1 C2}.\vspace*{0.05in}

The next proposition provides important properties of $\mathcal{F}_\lambda$ related to $\set{x^k}$, which are used below to establish convergence results for the iterative sequence $\set{x^k}$.

\begin{Proposition}\label{properties surrogate} Let $\varphi$ be defined in \eqref{eq:composite}, where $f$ is a $\mathcal{C}^2$-smooth function with $L$-Lipschitz gradient, and let $\set{x^k}$ be the sequence generated by Algorithm~\ref{pgm} with $\varepsilon_1\le r_1,\theta\le\mu$ and $\rho_k=C\varepsilon_k^2$ for all $k\in\N$, where $C$ is taken from 
Theorem~{\rm\ref{stationarity of pgm}}. Then for all $k\notin\N$, there are constants $a_1,a_2>0$ such that:

{\bf(i)} $\varphi(x^{k+1})\le \mathcal{F}_\lambda (x^k,\varepsilon_k)\le \varphi(x^k)+\mathscr{C}\varepsilon_k^2$.

{\bf(ii)} $\varphi(x^k)-\varphi(x^{k+1})\ge a_1 \norm{x^{k+1}-x^k}^2$.

{\bf(iii)} $\norm{\nabla \mathcal{F}_\lambda(x^k,\varepsilon_k)}\le a_2\norm{x^{k}-x^{k+1}}$.
\end{Proposition}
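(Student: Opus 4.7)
The plan is to prove the three assertions separately, with the unifying thread being a translation between the quantities $\|d^k\|$, $\|x^{k+1}-x^k\|$, and $\|x^k-T_\lambda(x^k)\|$ that is available for non-null iterations. Note first that the standing hypotheses $\varepsilon_1\le r_1$ and $\theta\le\mu$ propagate to $\varepsilon_k\le r_k$ for all $k$, so that for each $k\notin\mathcal{N}$ Proposition~\ref{tk < 2 lambda} supplies the working toolbox $x^{k+1}=x^k+t_kd^k$ with $t_k\in[\lambda,2\lambda]$, $\|d^k\|=\|g^k\|-\varepsilon_k>r_k\ge\varepsilon_k$, and $\|p^k-T_\lambda(x^k)\|\le\lambda\varepsilon_k$.

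For (i), the upper bound $\mathcal{F}_\lambda(x^k,\varepsilon_k)\le\varphi(x^k)+\mathscr{C}\varepsilon_k^2$ is immediate from the pointwise inequality $\varphi_\lambda\le\varphi$. For the lower bound, the condition $\lambda<(L+\varrho)^{-1}$ forces $L<\lambda^{-1}$, so the descent condition on $f$ gives $f(p^k)\le f(x^k)+\langle\nabla f(x^k),p^k-x^k\rangle+(2\lambda)^{-1}\|p^k-x^k\|^2$. Adding $g(p^k)$, completing the square, and recognizing $\Phi_{\lambda,F_\lambda(x^k)}(p^k)$ on the right-hand side produces $\varphi(x^{k+1})\le f(x^k)-\tfrac{\lambda}{2}\|\nabla f(x^k)\|^2+\Phi_{\lambda,F_\lambda(x^k)}(p^k)$. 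Then \eqref{subproblem} combined with the identity $\varphi_\lambda(x^k)=f(x^k)-\tfrac{\lambda}{2}\|\nabla f(x^k)\|^2+\inf\Phi_{\lambda,F_\lambda(x^k)}$ yields $\varphi(x^{k+1})\le\varphi_\lambda(x^k)+\min\{\tfrac{\lambda(1-\lambda\varrho)\varepsilon_k^2}{2},\rho_k\}$; and with $\rho_k=C\varepsilon_k^2$ and the definition of $\mathscr{C}$ this minimum is exactly $\mathscr{C}\varepsilon_k^2$.

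For (ii), the refined descent estimate $\tfrac{C_1}{4}\|d^k\|^2\le\varphi(x^k)-\varphi(x^{k+1})$ from Remark~\ref{remark descent} (itself a consequence of Proposition~\ref{prop dk goes 0} under the present choice of $\rho_k$) is the starting point. Converting $\|d^k\|$ to $\|x^{k+1}-x^k\|$ via $t_k\le2\lambda$ gives $\|d^k\|^2\ge(4\lambda^2)^{-1}\|x^{k+1}-x^k\|^2$, so $a_1:=C_1/(16\lambda^2)$ suffices.

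For (iii), the $\mathcal{C}^2$-smoothness of $f$ enables formula \eqref{eq:grad FBE}, whence $\|\nabla\varphi_\lambda(x^k)\|\le\lambda^{-1}(1+\lambda L)\|x^k-T_\lambda(x^k)\|$ (using $\|\nabla^2 f\|\le L$ from Proposition~\ref{prop characterize l descent}). The technical heart is the bound $\|x^k-T_\lambda(x^k)\|\le2\|x^{k+1}-x^k\|$: triangle inequality and the toolbox estimate give $\|x^k-T_\lambda(x^k)\|\le\|x^k-x^{k+1}\|+\lambda\varepsilon_k$, and for $k\notin\mathcal{N}$ the chain $\|x^{k+1}-x^k\|=t_k\|d^k\|\ge\lambda\|d^k\|\ge\lambda\varepsilon_k$ absorbs the proximal inexactness. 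Finally, since $\nabla_\varepsilon\mathcal{F}_\lambda(x,\varepsilon)=2\mathscr{C}\varepsilon$, the $\varepsilon$-component contributes $2\mathscr{C}\varepsilon_k\le(2\mathscr{C}/\lambda)\|x^{k+1}-x^k\|$, and one can take $a_2:=2\lambda^{-1}(1+\lambda L)+2\mathscr{C}/\lambda$. The main obstacle here is precisely the lower bound $\|x^{k+1}-x^k\|\ge\lambda\varepsilon_k$: it is the single place where the non-null character of the iteration is indispensable, and it is what allows the proximal approximation error to be controlled by the actual step rather than the other way around.
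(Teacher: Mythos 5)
Your proposal is correct and follows essentially the same route as the paper's proof: part (i) is the paper's chain of inequalities run in the forward direction (descent condition at $p^k$ plus completing the square to recover $\Phi_{\lambda,F_\lambda(x^k)}(p^k)$, then the $\mathscr{C}\varepsilon_k^2$-approximate optimality), part (ii) is identical, and part (iii) uses the same splitting of $\|x^k-T_\lambda(x^k)\|$ via the triangle inequality together with the crucial non-null bound $\varepsilon_k\le\|d^k\|\le\lambda^{-1}\|x^{k+1}-x^k\|$, arriving at the same constants $a_1=C_1/(16\lambda^2)$ and $a_2=2(\lambda^{-1}+L+\mathscr{C}\lambda^{-1})$. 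You also correctly identify the role of $\varepsilon_1\le r_1$, $\theta\le\mu$ in propagating $\varepsilon_k\le r_k$, which is exactly where the paper gets $t_k\in[\lambda,2\lambda]$ and the absorption of the proximal error.
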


\begin{proof}
Pick $k\notin\mathcal{N}$ and deduce from the second inequality in (i) that $\varphi_\lambda(x^k)\le \varphi(x^k)$. It follows by the constructions of  $\mathscr{C},C$, and $\rho_k$ that
\begin{align}\label{A epsilon}
\min\Big\{\dfrac{\lambda(1-\lambda\varrho)}{2}\varepsilon_k^2,\rho_k \Big\}&=\min\Big\{\dfrac{\lambda(1-\lambda\varrho)}{2}\varepsilon_k^2,\dfrac{C_1}{4C_2^2}\varepsilon_k^2,\dfrac{C_1}{4}\varepsilon_k^2\Big\}=\mathscr{C}\varepsilon_k^2.
\end{align}
Since $k\notin\mathcal{N}$, the vector $x^{k+1}=p^k$ is a $\mathscr{C}\varepsilon_k^2$-solution to subproblem \eqref{subproblem}. Combining this with \eqref{subproblem}, \eqref{A epsilon}, and the $L$-descent property of $f$ as in \eqref{descent condition}, we get that
\begin{align*}
\varphi_\lambda(x^k)&=f(x^k)-\dfrac{\lambda}{2}\norm{\nabla f(x^k)}^2+\inf_{y\in\R^n} \Phi_{\lambda,F_\lambda(x^k)}(y)\\
&\ge f(x^k)-\dfrac{\lambda}{2}\norm{\nabla f(x^k)}^2+\Phi_{\lambda,F_\lambda(x^k)}(x^{k+1})-\mathscr{C}\varepsilon_k^2\\
&=f(x^k)-\dfrac{\lambda}{2}\norm{\nabla f(x^k)}^2+g(x^{k+1})+\dfrac{1}{2\lambda}\norm{x^{k+1}-(x^k-\lambda\nabla f(x^k))}^2-\mathscr{C}\varepsilon_k^2\\
&= f(x^k)+\dotproduct{\nabla f(x^k),x^{k+1}-x^k}+\dfrac{1}{2\lambda}\norm{x^{k+1}-x^k}^2+g(x^{k+1})-\mathscr{C}\varepsilon_k^2\\
&\ge f(x^{k+1})-\dfrac{L}{2}\norm{x^{k+1}-x^k}^2+\dfrac{1}{2\lambda}\norm{x^{k+1}-x^k}^2+g(x^{k+1})-\mathscr{C}\varepsilon_k^2\\
&\ge \varphi(x^{k+1})-\mathscr{C}\varepsilon_k^2,
\end{align*}
where the last inequality is deduced from $\lambda^{-1}>L$. Therefore, (i) is justified.

It follows from $\varepsilon_1\le r_1$ and $\theta\le \mu$ that $\varepsilon_k\le r_k$. By Proposition \ref{tk < 2 lambda}(iii), we have $t_k\le 2\lambda.$
Combining this with \eqref{descent varphi} and the relation $x^{k+1}=x^{k}+t_kd^k$, we get
\begin{align*}
\varphi(x^k)-\varphi(x^{k+1})&\ge \dfrac{C_1}{4}\norm{d^k}^2=\dfrac{C_1}{4t_k^2}\norm{x^{k+1}-x^k}^2\\
&\ge\dfrac{C_1}{16\lambda^2}\norm{x^{k+1}-x^k}^2,
\end{align*}
which justifies (ii) with $a_1:=C_1/16\lambda^2$.

To verify (iii), we first observe that $\nabla \mathcal{F}_\lambda(x^k,\varepsilon_k)=(\nabla\varphi_\lambda(x^k),2\mathscr{C}\varepsilon_k).$ Since $\nabla f$ is $L$-Lipschitz, it follows from Proposition~\ref{prop characterize l descent} that $\norm{\nabla^2 f(x^k)}\le L$, and hence
\begin{align}
\norm{I-\lambda\nabla^2 f(x^k)}\le \norm{I}+\lambda\norm{\nabla^2f(x^k)}\le 1+\lambda L.
\end{align}
Using the representation of $\nabla\varphi_\lambda$ in \eqref{eq:grad FBE} and the latter estimate give us 
\begin{align}\label{norm grad envelope}
\norm{\nabla \varphi_\lambda(x^k)}&=\lambda^{-1}\norm{(I-\lambda\nabla^2f(x^k))(x^k-T_\lambda(x^k))}\nonumber\\
&\le \lambda^{-1}\norm{I-\lambda \nabla^2 f(x^k)}\cdot\norm{x^k-T_\lambda(x^k)}\nonumber\\
&\le \lambda^{-1}(1+\lambda L)\norm{x^k-T_\lambda(x^k)}\nonumber\\
&\le (\lambda^{-1}+L)\norm{x^k-x^{k+1}}+\lambda^{-1}(1+\lambda L)\norm{x^{k+1}-T_\lambda(x^k)}.
\end{align}
Furthermore, it follows from $x^{k+1}=p^k$ and Proposition~\ref{tk < 2 lambda}(i) that $\lambda^{-1}\norm{x^{k+1}-T_\lambda(x^k)}\le \varepsilon_k$. Combining the latter with \eqref{norm grad envelope} ensures that 
\begin{align}\label{norm grad env 2}
\norm{\nabla \varphi_\lambda(x^k)}\le (\lambda^{-1}+L)\norm{x^k-x^{k+1}}+(1+\lambda L)\varepsilon_k.
\end{align}
Remembering that $\varepsilon_k\le r_k$ and $k\notin\mathcal{N}$ allows us to deduce from Remark~\ref{null remark 2}(iii) that $\varepsilon_k\le \norm{d^k}$. Since $t_k\ge \lambda$ by Proposition~\ref{tk < 2 lambda}, we get 
\begin{align}\label{epsilon xk xk+1}
\varepsilon_k\le \norm{d^k}=t_k^{-1}\norm{x^k-x^{k+1}}\le\lambda^{-1}\norm{x^{k}-x^{k+1}}.
\end{align}
Combining finally \eqref{norm grad env 2} and \eqref{epsilon xk xk+1} with the representation of $\nabla\mathcal{F}_\lambda$ gives us
\begin{align*}
\norm{\nabla \mathcal{F}_\lambda(x^k,\varepsilon_k)}&\le \norm{\nabla \varphi_\lambda(x^k)}+2\mathscr{C}\varepsilon_k\\
&\le (\lambda^{-1}+L)\norm{x^k-x^{k+1}}+(1+\lambda L)\varepsilon_k+2\mathscr{C}\varepsilon_k\\
 &\le (\lambda^{-1}+L)\norm{x^k-x^{k+1}}+(1+\lambda L+2\mathscr{C})\lambda^{-1}\norm{x^k-x^{k+1}}\\
 &=2(\lambda^{-1}+L+\mathscr{C}\lambda^{-1})\norm{x^{k}-x^{k+1}},
\end{align*}
which justifies (iii) with $a_2:=2(\lambda^{-1}+L+\mathscr{C}\lambda^{-1})$ and thus completes the proof.
\end{proof}

The next result inspired by \cite[Theorem~2]{attouch10} is useful to establish the convergence rates below.

\begin{Lemma}\label{lemma KL fran}
Let $\set{s_k}$ be a decreasing sequence of positive numbers, and let $\alpha>1$ and $\beta>0$ be some constants. Suppose that $s_k\downarrow 0$ as $k\to\infty$, and that for all  large $k\in\N$ we have
\begin{align}\label{condi s alpha}
s_k^\alpha\le\beta(s_{k-1}-s_{k}).
\end{align}
Then there exists a number $\gamma>0$ such that 
\begin{align*}
 s_k\le\gamma k^{-\frac{1}{\alpha-1}}\;\text{ whenever }\;k\in\N\;\text{  is sufficiently large}.
\end{align*}
\end{Lemma}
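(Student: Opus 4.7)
My plan is to introduce the auxiliary sequence $\varphi_k := s_k^{1-\alpha}$, which is strictly increasing and tends to $+\infty$ since $s_k \downarrow 0$ and $1-\alpha < 0$. The strategy is the classical Attouch--Bolte argument: show that $\varphi_k - \varphi_{k-1}$ is bounded below by a positive constant for all sufficiently large $k$, then telescope to get $\varphi_k \ge \mu(k-k_0)$ for some $\mu > 0$, which directly yields $s_k \le (\mu(k-k_0))^{-1/(\alpha-1)} \le \gamma k^{-1/(\alpha-1)}$ after absorbing the shift into $\gamma$.

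Fix a reduction ratio $q := 2^{-1/\alpha} \in (0,1)$ and split into two cases based on whether $s_k \ge q s_{k-1}$ or $s_k < q s_{k-1}$. In the \emph{close-ratio} case $s_k \ge q s_{k-1}$, I would bound the increment using the mean-value theorem applied to the $\mathcal{C}^1$ function $t \mapsto t^{1-\alpha}$: there exists $\xi \in [s_k, s_{k-1}]$ with
\begin{equation*}
\varphi_k - \varphi_{k-1} = (\alpha-1)\,\xi^{-\alpha}(s_{k-1}-s_k) \ge (\alpha-1)\,s_{k-1}^{-\alpha}(s_{k-1}-s_k) \ge \tfrac{1}{2}(\alpha-1)\,s_k^{-\alpha}(s_{k-1}-s_k),
\end{equation*}
where the final step uses $s_{k-1}^{-\alpha} \ge q^\alpha s_k^{-\alpha} = \tfrac{1}{2}s_k^{-\alpha}$. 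Combining this with the hypothesis \eqref{condi s alpha} rewritten as $s_k^{-\alpha}(s_{k-1}-s_k) \ge 1/\beta$ gives $\varphi_k - \varphi_{k-1} \ge (\alpha-1)/(2\beta)$. In the \emph{fast-ratio} case $s_k < q s_{k-1}$, the definition of $q$ directly yields $\varphi_k > 2^{(\alpha-1)/\alpha}\,\varphi_{k-1}$, hence $\varphi_k - \varphi_{k-1} \ge (2^{(\alpha-1)/\alpha}-1)\,\varphi_{k-1}$.

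Letting $k_0$ be a fixed large index beyond which \eqref{condi s alpha} holds and $\varphi_{k_0} > 0$, the monotonicity $\varphi_{k-1} \ge \varphi_{k_0}$ for $k > k_0$ upgrades the fast-ratio bound to a uniform constant $(2^{(\alpha-1)/\alpha}-1)\,\varphi_{k_0}$. Setting $\mu := \min\{(\alpha-1)/(2\beta),\,(2^{(\alpha-1)/\alpha}-1)\varphi_{k_0}\} > 0$, both cases give $\varphi_k - \varphi_{k-1} \ge \mu$ for $k > k_0$; telescoping and inverting the relation $\varphi_k = s_k^{1-\alpha}$ delivers the conclusion with an appropriate $\gamma$ (e.g.\ taking $\gamma$ large enough to also absorb the finitely many initial indices $k \le 2k_0$, since the bound $k-k_0 \ge k/2$ holds beyond this threshold).

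The main obstacle here is not any single computation but the \emph{gluing} of the two cases: in the fast-ratio case one only gets a multiplicative lower bound $(r-1)\varphi_{k-1}$ that a priori could be small, and it is essential to observe that this bound is at least $(r-1)\varphi_{k_0}$, which in turn is positive only because we chose $k_0$ large enough to ensure $s_{k_0}>0$ (which is automatic since $s_k>0$ for all $k$). Everything else reduces to routine estimates, and the argument does not require any additional structure on $\{s_k\}$ beyond its monotonicity and the recursion~\eqref{condi s alpha}.
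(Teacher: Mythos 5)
Your proof is correct and follows essentially the same route as the paper's: the case split on the ratio $s_k/s_{k-1}$ versus $2^{-1/\alpha}$ is exactly the paper's dichotomy $s_k^{-\alpha}\gtrless 2s_{k-1}^{-\alpha}$, the fast-ratio case is handled identically via monotonicity of $s_k^{1-\alpha}$, and your close-ratio estimate via the mean-value theorem yields the same constant $(\alpha-1)/(2\beta)$ that the paper obtains with the equivalent integral bound $\left(s_{k-1}-s_k\right)s_{k-1}^{-\alpha}\le\int_{s_k}^{s_{k-1}}s^{-\alpha}\,ds$. The telescoping and absorption of the index shift into $\gamma$ also match the paper's conclusion.
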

\begin{proof}
Let $N\in\N$ be such that \eqref{condi s alpha} holds for all $k\ge N$. Defining
\begin{align*}
\bar \mu:=\min\set{\dfrac{\alpha-1}{2\beta},(2^{\frac{\alpha-1}{\alpha}}-1)s^{1-\alpha}_{N-1}}>0,
\end{align*}
we are going to show that
\begin{align}\label{case 1}
s_k^{1-\alpha}-s_{k-1}^{1-\alpha}\ge \bar \mu\;\text{ for all }\;k\ge N.
\end{align}
Consider the two cases for $k.$\vspace*{0.03in}
 
\noindent\textit{Case~1}: $s_k^{-\alpha}>2s_{k-1}^{-\alpha}$. Since $1-\alpha<0$, we have $s_k^{1-\alpha}> 2^{\frac{\alpha-1}{\alpha}} s^{1-\alpha}_{k-1}$. Then the decreasing property of $\set{s_k}$ yields the desired result:
\begin{align*}
s_k^{1-\alpha}-s_{k-1}^{1-\alpha}> (2^{\frac{\alpha-1}{\alpha}}-1)s_{k-1}^{1-\alpha}\ge (2^{\frac{\alpha-1}{\alpha}}-1)s^{1-\alpha}_{N-1}\ge \bar \mu.
\end{align*}
\textit{Case~2}:  $s_k^{-\alpha}\le 2s_{k-1}^{-\alpha}$. It follows from \eqref{condi s alpha} and the decreasing property of $\set{s_k}$ that 
\begin{align*}
1 &\le \beta(s_{k-1}-s_k)s_k^{-\alpha}\le 2\beta (s_{k-1}-s_k)s_{k-1}^{-\alpha}\\
&\le 2 \beta \int_{s_k}^{s_{k-1}}s^{-\alpha}ds=\dfrac{2\beta}{1-\alpha}(s_{k-1}^{1-\alpha}-s_k^{1-\alpha}),
\end{align*} which implies by $\alpha>1$ that
\begin{align*}
s_k^{1-\alpha}-s_{k-1}^{1-\alpha}\ge  \dfrac{\alpha-1}{2\beta}\ge \bar \mu.
 \end{align*}
Fixing some $K>2N$ and summing up \eqref{case 1} over $k=N+1,\ldots,K$, we get that $s_K^{1-\alpha}-s_{N}^{1-\alpha}\ge \bar{\mu}(K-N)$. This allows us to verify the estimates
\begin{align*}
s_K\le \sbrac{s_{N}^{1-\alpha}+\bar{\mu}(K-N)}^{\frac{1}{1-\alpha}}\le \brac{\dfrac{\bar\mu }{2}}^{\frac{1}{1-\alpha}}K^{\frac{1}{1-\alpha}}
\end{align*}
and therefore complete the proof of the lemma. 
\end{proof}

Now we are ready to establish the global convergence and obtain convergence rates of our IPGM Algorithm~\ref{pgm} under the KL property of $\mathcal{F}_\lambda$ from 
\eqref{def surrogate}.

\begin{Theorem}\label{convergence rate ipgm}
Let $\varphi$ be defined in \eqref{eq:composite}, where $f$ is a $\mathcal{C}^2$-smooth function with $L$-Lipschitz gradient, and let $\set{x^k}$ be the sequence generated by Algorithm~\ref{pgm} satisfying $\varepsilon_1\le r_1,\theta\le\mu$  and $\rho_k=C\varepsilon_k^2$ for all $k\in\N$ with $C$ taken from Theorem~{\rm\ref{stationarity of pgm}}. Assume further that the set $\N\setminus \mathcal{N}$ is infinite, that $\set{x^k}$ has an accumulation point $\bar x$, and that the function $\mathcal{F}_\lambda$ in \eqref{def surrogate} satisfies the KL property from Definition~{\rm\ref{KL ine}} at $(\bar x,0)$. Then we have the assertions:

{\bf(i)} $\bar x$ is a stationary point of $\varphi$ and the sequence $\set{x^k}$ converges to $\bar x$  having finite length, i.e., 
\begin{align*}
\sum_{k=1}^\infty \norm{x^{k+1}-x^k}<\infty.
\end{align*}

{\bf(ii)} If in addition the KL property of $\mathcal{F}_\lambda$ at $(\bar x,0)$ holds with $\psi(t):=Mt^{1-q}$ for some $M>0$ and $q\in (0,1)$, then the following convergence rates are guaranteed for the sequence $\set{z^k}$ from \eqref{0 zk xjk}:

$\bullet$ For $q\in(0,1/2]$, the sequence $\set{z^k}$ converges linearly to $\bar x$.

$\bullet$ For $q\in(1/2,1)$, there exists a positive constant $\gamma$ such that 
\begin{align*}
\norm{z^k-\bar x}\le \gamma k^{-\frac{1-q}{2q-1}}\;\text{ for sufficiently large }\;k\in\N.
\end{align*}
\end{Theorem}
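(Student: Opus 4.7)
The plan is to apply the Kurdyka--\L ojasiewicz convergence framework of Attouch--Bolte--Svaiter---the same one underlying Proposition~\ref{convergence IRG KL}---to the merit function $\mathcal{F}_\lambda$ on the augmented state space $\R^{n+1}$, where the extended iterates are $u^k:=(x^k,\varepsilon_k)$ and the target limit is $(\bar x,0)$. Proposition~\ref{properties surrogate} has been designed to supply the three classical ingredients along $\{u^k\}$: function-value continuity (from $\mathcal{C}^1$-smoothness of $\varphi_\lambda$ via \eqref{eq:grad FBE}), the relative-error bound $\|\nabla\mathcal{F}_\lambda(u^k)\|\le a_2\|x^{k+1}-x^k\|\le a_2\|u^{k+1}-u^k\|$ from part~(iii), and, after a modest strengthening of part~(i), a sufficient descent of $\mathcal{F}_\lambda$ itself.

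The crux is the descent. I would retrace the proof of Proposition~\ref{properties surrogate}(i) but retain the positive curvature term that is discarded at its final step, obtaining
\[
\varphi_\lambda(x^k)\;\ge\;\varphi(x^{k+1})+\Bigl(\tfrac{1}{2\lambda}-\tfrac{L}{2}\Bigr)\|x^{k+1}-x^k\|^2-\mathscr{C}\varepsilon_k^2.
\]
Combining this with $\varphi_\lambda(x^{k+1})\le\varphi(x^{k+1})$ and with $\varepsilon_k\le\lambda^{-1}\|x^{k+1}-x^k\|$ at non-null iterations (Remark~\ref{null remark 2}(iii) plus Proposition~\ref{tk < 2 lambda}(iii)) yields the quadratic descent $\mathcal{F}_\lambda(u^k)-\mathcal{F}_\lambda(u^{k+1})\ge c_1\|u^{k+1}-u^k\|^2$ for $k\notin\mathcal{N}$, with $c_1=\tfrac{1}{2\lambda}-\tfrac{L}{2}-\mathscr{C}\lambda^{-2}>0$. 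Positivity is exactly the reason for the conservative bound $\mathscr{C}\le C_1/4$ in the definition of $\mathcal{F}_\lambda$: together with $\lambda<(L+\varrho)^{-1}$ it forces $\mathscr{C}<\lambda(1-L\lambda)/2$. At null iterations $x$ does not move, but $\varepsilon_{k+1}=\theta\varepsilon_k$ makes $\mathcal{F}_\lambda$ drop by $\mathscr{C}(1-\theta^2)\varepsilon_k^2\ge\mathscr{C}\tfrac{1+\theta}{1-\theta}\|u^{k+1}-u^k\|^2$, so after shrinking $c_1$ the inequality persists throughout.

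Once descent is in place, the standard machinery closes the argument. Continuity of $\mathcal{F}_\lambda$, the hypothesis that $\bar x$ is an accumulation point, and $\varepsilon_k\downarrow 0$ (which Step~1 makes available because $\mathcal{F}_\lambda$ is now bounded below along the iterates, forcing the conclusion of Theorem~\ref{stationarity of pgm}(i)) give $\mathcal{F}_\lambda(u^k)\downarrow\mathcal{F}_\lambda(\bar x,0)$. Writing $\Delta_k:=\mathcal{F}_\lambda(u^k)-\mathcal{F}_\lambda(\bar x,0)$ and applying the concavity of $\psi$ together with the KL inequality at $(\bar x,0)$, Step~1, and Step~2 gives
\[
\psi(\Delta_k)-\psi(\Delta_{k+1})\ge\frac{\Delta_k-\Delta_{k+1}}{\|\nabla\mathcal{F}_\lambda(u^k)\|}\ge\frac{c_1}{a_2}\|u^{k+1}-u^k\|,
\]
so that $\sum_{k}\|x^{k+1}-x^k\|\le\sum_k\|u^{k+1}-u^k\|<\infty$ and the full sequence $\{x^k\}$ converges; since $\bar x$ is an accumulation point the limit is $\bar x$, and stationarity is Theorem~\ref{stationarity of pgm}(ii). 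This gives~(i).

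For~(ii), with $\psi(t)=Mt^{1-q}$ the KL inequality becomes $\Delta_k^q\le M(1-q)a_2\|u^{k+1}-u^k\|$, which combined with the sufficient decrease yields the recursion $\Delta_k^{2q}\le\beta(\Delta_k-\Delta_{k+1})$. For $q\in(0,1/2]$ this gives linear decay of $\Delta_k$; for $q\in(1/2,1)$, Lemma~\ref{lemma KL fran} with $\alpha=2q$ gives $\Delta_k\le\gamma_1 k^{-1/(2q-1)}$. The tail-sum $\|z^k-\bar x\|\le\sum_{j\ge k}\|z^{j+1}-z^j\|\le (a_2/c_1)\psi(\Delta_k)=C\Delta_k^{1-q}$ then transfers these rates to the stated bounds on $\|z^k-\bar x\|$, with the exponent $(1-q)/(2q-1)$ emerging as the product $(1-q)\cdot 1/(2q-1)$. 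The main obstacle throughout is Step~1---the descent of $\mathcal{F}_\lambda$ at non-null iterations does \emph{not} follow from the statement of Proposition~\ref{properties surrogate}(i) alone and requires re-opening that proof to salvage the curvature term, the calibration $\mathscr{C}\le C_1/4$ being exactly what is needed to absorb the remaining $\mathscr{C}\varepsilon_k^2$ penalty via the bound $\varepsilon_k\le\lambda^{-1}\|x^{k+1}-x^k\|$. Once past this point the argument mirrors Proposition~\ref{convergence IRG KL} almost verbatim.
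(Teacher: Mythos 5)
Your proposal is correct in its main lines but follows a genuinely different route from the paper. The paper never establishes sufficient descent of $\mathcal{F}_\lambda$ itself: it works on the non-null subsequence $\set{z^k}$ from \eqref{0 zk xjk}, uses the descent of the original objective $\varphi$ (Proposition~\ref{properties surrogate}(ii), inherited from Proposition~\ref{prop dk goes 0}), and couples this to the KL inequality of $\mathcal{F}_\lambda$ through the sandwich $\varphi(z^{k+1})\le\mathcal{F}_\lambda(z^k,\sigma_k)\le\varphi(z^k)+\mathscr{C}\sigma_k^2$ evaluated at the shifted point $y^{k-1}=(z^{k-1},\sigma_{k-1})$; the quantity driving the telescoping there is $\psi(\varphi(z^k)-\varphi(\bar x))$, not $\psi$ of an $\mathcal{F}_\lambda$-gap. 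You instead manufacture a genuine Lyapunov descent $\mathcal{F}_\lambda(u^k)-\mathcal{F}_\lambda(u^{k+1})\ge c_1\norm{u^{k+1}-u^k}^2$ on the augmented space by re-opening the proof of Proposition~\ref{properties surrogate}(i), keeping the $\bigl(\tfrac{1}{2\lambda}-\tfrac{L}{2}\bigr)\norm{x^{k+1}-x^k}^2$ term and absorbing the $\mathscr{C}\varepsilon_k^2$ penalty via $\varepsilon_k\le\lambda^{-1}\norm{x^{k+1}-x^k}$ at non-null iterations; your calibration $\mathscr{C}\le C_1/4<\lambda(1-\lambda L)/2$ checks out, as does the drop $\mathscr{C}(1-\theta^2)\varepsilon_k^2$ at null iterations. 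This recasts the theorem as a direct instance of the Attouch--Bolte--Svaiter three-condition scheme and removes the paper's index-shift bookkeeping. Your rate analysis (Lemma~\ref{lemma KL fran} applied to the gaps $\Delta_k$ with exponent $2q$, then transferred through $\psi$) also differs from the paper's (the same lemma applied to the tail sums $s_p$ with exponent $q/(1-q)$), and both yield the same exponents.

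There is, however, one concrete gap. The relative-error bound $\norm{\nabla\mathcal{F}_\lambda(u^k)}\le a_2\norm{u^{k+1}-u^k}$ is supplied by Proposition~\ref{properties surrogate}(iii) only for $k\notin\mathcal{N}$, and at null iterations it genuinely fails: there $\norm{u^{k+1}-u^k}=(1-\theta)\varepsilon_k$, while $\norm{\nabla\varphi_\lambda(x^k)}$ is controlled only through $\norm{x^k-T_\lambda(x^k)}\le\lambda(r_k+2\varepsilon_k)$, and $r_k/\varepsilon_k=(\mu/\theta)^m\,r_1/\varepsilon_1$ is unbounded when $\theta<\mu$. Hence the chain $\psi(\Delta_k)-\psi(\Delta_{k+1})\ge (c_1/a_2)\norm{u^{k+1}-u^k}$ and the recursion $\Delta_k^{2q}\le\beta(\Delta_k-\Delta_{k+1})$, as you state them over all $k$, are invalid on $\mathcal{N}$. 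The repair is routine but must be made explicit: since each increment $\psi(\Delta_k)-\psi(\Delta_{k+1})$ is nonnegative by your descent step, telescoping only over $k\notin\mathcal{N}$ still bounds $\sum_{k\notin\mathcal{N}}\norm{u^{k+1}-u^k}$ by $(a_2/c_1)\psi(\Delta_{k_1})$, null iterations contribute nothing to $\sum\norm{x^{k+1}-x^k}$, and the rate recursion should be run for $\tilde\Delta_k:=\Delta_{j_k}$ using $\Delta_{j_k}-\Delta_{j_k+1}\le\tilde\Delta_k-\tilde\Delta_{k+1}$. You should also spell out the induction that keeps $(z^k,\sigma_k)$ inside the KL neighborhood and level set before the KL inequality may be invoked along the sequence; the paper carries this out explicitly, and it is not automatic.
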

\begin{proof}
(i) It follows from Proposition~\ref{tk < 2 lambda} that $\set{x^k}$ is a sequence generated by Algorithm~\ref{scheme continuous mapping} for finding zeros of the continuous mapping $G_\lambda$. Then the imposed assumptions guarantee that all the conditions of Proposition~\ref{0 prop non null sequence} are satisfied. Suppose that $\N\setminus\mathcal{N}=\set{j_1,j_2,\ldots}$, and let $\set{z^k}$ be taken from\eqref{0 zk xjk} while $\set{\sigma_k}$ is defined by $\sigma_k:=\varepsilon_{j_k}$ for all $k\in\N$. Since $\bar x$ is an accumulation point of $\set{x^k}$, it is also an accumulation point of $\set{z^k}$ by 
Proposition~\ref{0 prop non null sequence}. Therefore, we can find an infinite index subset $I\subset\N$ such that $z^k\overset{I}{\rightarrow}\bar x$. Fix any $k\in\N$. By taking into account that $j_k\notin\mathcal{N}$ and using the construction of $\set{z^k},\set{\varepsilon_{j_k}}$ and $z^{k+1}=x^{j_k+1}$ in Proposition~\ref{0 prop non null sequence}, we deduce from Proposition~\ref{properties surrogate} that 
\begin{align}
\varphi(z^{k+1})\le \mathcal{F}_\lambda (z^k,\sigma_k)\le \varphi(z^k)+\mathscr{C}\sigma_k^2,\label{3 ine}\\
\varphi(z^k)-\varphi(z^{k+1})\ge a_1 \norm{z^{k+1}-z^k}^2,\label{>a}\\
\norm{\nabla \mathcal{F}_\lambda(z^k,\sigma_k)}\le a_2\norm{z^{k+1}-z^{k}}\label{<b}
\end{align}
whenever $k\in\N$, where $a_1,a_2$ are some fixed positive constants. It follows from \eqref{>a} and $z^k\ne z^{k+1}$ for all $k\in\N$ in Proposition~\ref{0 prop non null sequence} that $\set{\varphi(z^k)}$ is strictly decreasing. Combining this with the lower semicontinuity of $\varphi$ gives us the relationships
\begin{align}\label{inf >}
\inf_{k\in\N}\varphi(x^k)=\inf_{k\in\N}\varphi(z^k)=\lim_{k\rightarrow\infty}\varphi(z^k)=\lim_{k\overset{I}{\rightarrow}\infty}\varphi(z^k)\ge \varphi(\bar x)>-\infty.
\end{align} 
This shows together with the imposed assumptions tells us that all the requirements of Theorem~\ref{stationarity of pgm} are satisfied. Thus it follows from assertions (i) and (ii) of Theorem~\ref{stationarity of pgm} that $\bar x$ is a stationary point of $\varphi$, that $\varepsilon_k\downarrow0$, and that $\norm{x^k-x^{k+1}}\rightarrow 0$ as $k\to\infty$. The constructions of $\set{z^k}$ and $\set{\sigma_k}$ together with $z^{k+1}=x^{j_k+1}$ for all $k\in\N$ yield $\norm{z^{k+1}-z^{k}}\rightarrow0$ and $\sigma_k\downarrow 0$. Combining the latter with \eqref{3 ine}, the continuity of $\mathcal{F}_\lambda$, and the condition $\varphi_\lambda(x)\le \varphi(x)$ for all $x\in\R^n$, we get
\begin{align*}
\lim_{k\rightarrow\infty}\varphi(z^k)=\lim_{k\overset{I}{\rightarrow}\infty}\varphi(z^{k+1})\le\lim_{k\overset{I}{\rightarrow}\infty} \mathcal{F}_\lambda(z^k,\sigma_k)=\mathcal{F}_\lambda(\bar x,0)=\varphi_\lambda(\bar x)\le \varphi(\bar x),
\end{align*}
which brings us by taking \eqref{inf >} into account to
\begin{align}\label{limit F_lambda}
\lim_{k\rightarrow\infty}\varphi(z^k)=\lim_{k\rightarrow\infty}\mathcal{F}_\lambda(z^k,\sigma_k)=\varphi(\bar x)=\mathcal{F}_\lambda(\bar x,0).
\end{align}
Involving now the KL property of $\mathcal{F}_\lambda$ at $\bar y:=(\bar x,0)$, we find numbers $\eta>0,\nu>0$ and a continuous concave function $\psi:[0,\eta)\rightarrow[0,\infty)$ satisfying the conditions: $\psi(0)=0$, $\psi$ is $\mathcal{C}^1$ on $(0,\eta)$, $\psi'(s)>0$ for all $s\in (0,\eta)$, and
for all $y\in \R^{n}\times \R$ such that $\norm{y-\bar y}<\nu$ with $ \varphi(\bar x)<\mathcal{F}_\lambda(y)< \varphi(\bar x)+\eta$ we have
\begin{align}\label{kll 2}
\psi'(\mathcal{F}_\lambda(y)-\varphi(\bar x))\norm{\nabla\mathcal{F}_\lambda (y)}\ge 1.
\end{align}
For any $k\in\N$, set $y^k:=(z^k,\sigma_k)$ and by using \eqref{limit F_lambda}, \eqref{3 ine}, and $\sigma_k\downarrow0$ find $k_1\in\N$ such that
\begin{align}\label{condition for using KL 2}
\varphi(\bar x)< \varphi(z^k)\le \mathcal{F}_\lambda(y^{k-1})<\varphi(\bar x)+\eta\quad \text{ and }\quad \sigma_k<\nu/4\;\text{ whenever }\;k\ge k_1.
\end{align}
Since $\bar x$ is an accumulation point of $\set{z^k}$ with $\norm{z^{k+1}-z^{k}}\rightarrow0$, and since $\psi$ is continuous at $0$ with $\psi(0)=0$, suppose without loss of generality that
\begin{align}\label{further condition}
\norm{z^{k_1-1}-\bar x}<\nu/4,\quad \norm{z^{k_1}-z^{k_1-1}}<\nu/8,\;\mbox{ and }\;a_2a_1^{-1}\psi(\varphi(z^{k_1})-\varphi(\bar x))<\nu/4.
\end{align}
We claim that if $k\ge k_1$ and $\norm{y^{k-1}-\bar y}<\nu$, then
\begin{align}\label{ine claim 1}
2\norm{z^{k+1}-z^{k}}\le \phi_k+\norm{z^{k}-z^{k-1}},
\end{align} 
where $\phi_k:=a_2a_1^{-1}\sbrac{\psi(\varphi(z^k)-\varphi(\bar x))-\psi(\varphi(z^{k+1})-\varphi(\bar x))}$. To verify this, fix such $k$. Since $\norm{y^{k-1}-\bar y}<\nu$ and $k\ge k_1$, it follows from the first assertion in \eqref{condition for using KL 2} that inequality \eqref{kll 2} holds for $y=y^{k-1},$ i.e., 
\begin{align}\label{psi ' F lambda}
\psi'(\mathcal{F}_\lambda(y^{k-1})-\varphi(\bar x))\norm{\nabla \mathcal{F}_\lambda(y^{k-1})}\ge 1.
\end{align}
The latter implies by \eqref{<b} that
\begin{align}\label{psi' 2}
\psi'(\mathcal{F}_\lambda(y^{k-1})-\varphi(\bar x))\ge \dfrac{1}{\norm{\nabla \mathcal{F}_\lambda(y^{k-1})}}\ge \dfrac{1}{a_2\norm{z^{k}-z^{k-1}}}.
\end{align}
Since $\psi$ is concave, $\psi'$ is nonincreasing. It follows from \eqref{3 ine} that
$\mathcal{F}_\lambda(y^{k-1})-\varphi(\bar x)\ge \varphi(z^k)-\varphi(\bar x)$, and the nonincreasing property implies therefore that
\begin{align*}
\dfrac{1}{a_2\norm{z^{k}-z^{k-1}}}\le \psi'(\mathcal{F}_\lambda(y^{k-1})-\varphi(\bar x))\le \psi'(\varphi(z^k)-\varphi(\bar x)).
\end{align*}
Combining the latter with the concavity of $\psi$ and condition \eqref{>a} brings us to the inequalities 
\begin{align*}
&\psi(\varphi(z^k)-\varphi(\bar x))-\psi(\varphi(z^{k+1})-\varphi(\bar x))\ge\psi'(\varphi(z^k)-\varphi(\bar x))(\varphi(z^k)-\varphi(z^{k+1}))\\
&\ge \psi'(\varphi(z^k)-\varphi(\bar x))a_1\norm{z^{k+1}-z^{k}}^2\ge \dfrac{a_1\norm{z^{k+1}-z^{k}}^2}{a_2\norm{z^k-z^{k-1}}}.
\end{align*}
This can be equivalently written as
\begin{align*}
\norm{z^{k+1}-z^k}^2\le\phi_k\norm{z^k-z^{k-1}},
\end{align*}
which implies in turn by using $2\sqrt{uv}\le u+v$ that
\begin{align*}
2\norm{z^{k+1}-z^k}\le \phi_k+\norm{z^k-z^{k-1}}
\end{align*}
and thus verifies the above claim.

Now we show that inequality \eqref{ine claim 1} holds for all $k\ge k_1$ by induction. This is obviously true for $k=k_1$ since $\norm{y^{k_1-1}-\bar y}\le \norm{z^{k_1-1}-\bar x}+\sigma_{k_1}< \nu/2$. Suppose that it holds for $k_1,k_1+1,\ldots,K-1.$ Summing up inequality \eqref{ine claim 1} over $k=k_1,k_1+1,\ldots,K-1$, we get
\begin{align*}
2\sum_{k=k_1}^{K-1}\norm{z^{k+1}-z^{k}}&\le \sum_{k=k_1}^{K-1}\norm{z^{k}-z^{k-1}}+\sum_{k=k_1}^{K-1}\phi_k\\
&\le \sum_{k=k_1}^{K-1}\norm{z^{k+1}-z^{k}}+\norm{z^{k_1}-z^{k_1-1}}+\dfrac{a_2}{a_1}\psi(\varphi(z^{k_1})-\varphi(\bar x)),
\end{align*}
which implies in turn that
\begin{align}\label{letting k infi}
\sum_{k=k_1}^{K-1}\norm{z^{k+1}-z^{k}}\le \norm{z^{k_1}-z^{k_1-1}}+\dfrac{a_2}{a_1}\psi(\varphi(z^{k_1})-\varphi(\bar x)).
\end{align}
Employing the triangle inequality gives us 
\begin{align*}
\norm{z^{K-1}-\bar x}&\le \norm{z^{k_1}-\bar x}+\sum_{k=k_1}^{K-1}\norm{z^{k+1}-z^{k}}\\
&\le  \norm{z^{k_1-1}-\bar x}+2\norm{z^{k_1}-z^{k_1-1}}+\dfrac{a_2}{a_1}\psi(\varphi(z^{k_1})-\varphi(\bar x))< \dfrac{3\nu}{4}.
\end{align*}
Since $\sigma_{K-1}< \nu/4$ by \eqref{condition for using KL 2}, we get that 
$$\norm{y^{K-1}-\bar y}\le \norm{z^{K-1}-\bar x}+\sigma_{K-1}<\nu,$$ 
and thus inequality \eqref{ine claim 1} holds for $k=K$. Letting $K\rightarrow\infty$ in \eqref{ine claim 1} gives us 
\begin{align*}
\sum_{k=1}^\infty\norm{z^{k+1}-z^{k}}<\infty.
\end{align*}
The latter means that $\set{z^k}$ is a Cauchy sequence, and hence it converges to $\bar x$. Therefore, $\set{x^k}$ converges to $\bar x$ as well, and moreover we have
\begin{align*}
&\sum_{k=1}^\infty\norm{x^{k+1}-x^{k}}=\sum_{k\in\mathcal{N}}\norm{x^{k+1}-x^{k}}+\sum_{k\notin \mathcal{N}}\norm{x^{k+1}-x^{k}}\\
&=\sum_{k\in\N}\norm{x^{j_k+1}-x^{j_k}}=\sum_{k=1}^\infty\norm{z^{k+1}-z^{k}}<\infty,
\end{align*}
which verifies assertion (i). 

(ii) For any $p\in\N,$ set $s_p:=\sum_{k=p}^\infty\norm{z^{k+1}-z^{k}}$, which is a finite number. The triangle inequality yields $\norm{z^p-\bar x}\le s_p$, it suffices to establish the convergence rates for $s_p$. To proceed, take any $p\ge k_1$, where $k_1$  satisfies \eqref{condition for using KL 2} and \eqref{further condition}. It follows from the proof of (i) that
\begin{align*}
2\norm{z^{k+1}-z^k}\le \phi_k+\norm{z^k-z^{k-1}}\text{ for all }k\ge p.
\end{align*}
Summing up the above inequality over $k=p,p+1,\ldots$, we get
\begin{align*}
2\sum_{k=p}^\infty\norm{z^{k+1}-z^k}&\le \sum_{k=p}^\infty\norm{z^k-z^{k-1}}+\sum_{k=p}^\infty\phi_k\\
&\le\sum_{k=p}^\infty\norm{z^{k+1}-z^{k}}+\norm{z^{p}-z^{p-1}}+\dfrac{a_2}{a_1}\psi(\varphi(z^p)-\varphi(\bar x)),
\end{align*}
which implies in turn that 
\begin{align}\label{condition sp norm}
s_p\le\norm{z^{p}-z^{p-1}}+\dfrac{a_2}{a_1}(\varphi(z^p)-\varphi(\bar x))^{1-q}.
\end{align}
It follows from \eqref{psi ' F lambda} by using $\psi'(t)=(1-q)Mt^{-q}$ for all $t>0$ that $$(\mathcal{F}_\lambda(y^{p-1})-\varphi(\bar x))^q\le M(1-q)\norm{\nabla \mathcal{F}_\lambda(y^{k-1})}.$$ Combining this with   $\varphi(z^p)\le \mathcal{F}_\lambda(y^{p-1})$ from \eqref{3 ine} and $q\in (0,1),$ we get
\begin{align*}
\dfrac{a_2}{a_1}(\varphi(z^{p})-\varphi(\bar x))^{1-q}&\le \dfrac{a_2}{a_1}(\mathcal{F}_\lambda(y^{p-1})-\varphi(\bar x))^{1-q}\\
&\le \dfrac{a_2}{a_1}(M(1-q)\norm{\nabla \mathcal{F}_\lambda(y^{p-1})})^{\frac{1-q}{q}}.
\end{align*}
The latter together with \eqref{condition sp norm} and \eqref{<b} gives us 
\begin{align}
s_p&\le \norm{z^{p}-z^{p-1}}+\dfrac{a_2}{a_1}(M(1-q)\norm{\nabla \mathcal{F}_\lambda(y^{p-1})}))^{\frac{1-q}{q}}\nonumber\\
&\le \norm{z^{p}-z^{p-1}}+\dfrac{a_2}{a_1}(M(1-q)a_2\norm{z^{p}-z^{p-1}})^{\frac{1-q}{q}}\text{ for all }p\ge k_1.\label{sp <=}
\end{align}
We consider the two possible cases for $q.$

\medskip

\noindent\textit{Case~1}: $q\in(0,1/2]$\textit{, or equivalently,} $(1-q)/q\ge 1$. Since $\norm{z^{k+1}-z^k}\rightarrow0$ in this case, we can assume that $M(1-q)a_2\norm{z^p-z^{p-1}}<1$ for all $k\ge p_1$ and thus deduce from \eqref{sp <=} that
\begin{align*}
s_p&\le \norm{z^{p}-z^{p-1}}+\dfrac{a_2}{a_1}(M(1-q)a_2\norm{z^{p}-z^{p-1}})\\
&\le \brac{1+\dfrac{M(1-q)a_2^2}{a_1}}\norm{z^{p}-z^{p-1}}=\alpha (s_{p-1}-s_p),
\end{align*}
where $\alpha:=\brac{1+\dfrac{M(1-q)a_2^2}{a_1}}$. The above estimate can be rewritten as 
\begin{align*}
s_p\le \dfrac{\alpha}{1+\alpha}s_{p-1}\text{ for all }p\ge k_1,
\end{align*}
which implies that $\set{s_k}$ linearly converges to $0.$ Therefore, $\set{z^k}$ linearly converges to $\bar x$ as $k\to\infty$.

\medskip

\noindent \textit{Case~2}: $q\in(1/2,1),$ \textit{or equivalently,} $(1-q)/q\in (0,1)$. It follows from \eqref{sp <=} in this case that 
\begin{align*}
s_p\le \brac{1+\dfrac{a_2}{a_1}(M(1-q)a_2)^{\frac{1-q}{q}}}\norm{z^{p}-z^{p-1}}^{\frac{1-q}{q}},
\end{align*}
which can be rewritten as
\begin{align*}
s_p^{\frac{q}{1-q}}\le \beta(s_{p-1}-s_p)\text{ for all }p\ge k_1,\;\mbox{ with }\;\beta:=\sbrac{1+\dfrac{a_2}{a_1}(M(1-q) a_2)^{\frac{1-q}{q}}}^{\frac{q}{1-q}}.
\end{align*}
By Lemma~\ref{lemma KL fran} with $\alpha:=q/(1-q)$, there exists a positive constant $\gamma$ such that 
\begin{align*}
  s_k\le \gamma k^{-\frac{1-q}{2q-1}}\;\text{ for sufficiently large }\;k\in\N.
\end{align*}
This completes the proof of the theorem.
\end{proof}

\begin{Remark}[comments on the KL assumption]\rm\label{remark KL}
Let us now discuss some efficient conditions on $f$ and $g$, which guarantee the KL property of $\mathcal{F}_\lambda$ given in \eqref{def surrogate} by
 \begin{align*}
\mathcal{F}_\lambda(x,\varepsilon)= f(x) - \frac{\lambda}{2}\|\nabla f(x)\|^2+ e_\lambda g(x-\lambda\nabla f(x))+\mathscr{C}\varepsilon^2.
\end{align*}

{\bf(i)} \textit{$f$ is analytic and $g$ is semialgebraic.} Indeed, it follows from Remark~\ref{semi algebraic} that $x\mapsto e_\lambda g(x)$ is semialgebraic, and thus is subanalytic. Since $f$ is analytic, \cite[Proposition~2.2.3]{krantz02} tells us that all the partial derivatives of $f$ is analytic. Using \cite[Proposition~2.2.8]{krantz02}, we deduce that $x\mapsto\norm{\nabla f(x)}^2$ is analytic, and thus $(x,\varepsilon)\mapsto f(x)-\dfrac{\lambda}{2}\norm{\nabla f(x)}^2+\mathscr{C}\varepsilon^2$ is analytic. Since all the partial derivatives of $f$ is analytic, Remark~\ref{remark subana} also tells us that $\nabla f$ is 
subanalytic. Since the functions $x\mapsto e_\lambda g(x)$ and $x\mapsto x-\lambda \nabla f(x)$ are continuous, we deduce from  Remark~\ref{remark subana}(iii) that $e_\lambda g(x-\lambda\nabla f(x))$ is subanalytic, continuous, and so is $\mathcal{F}_\lambda$. By the continuity of $\mathcal{F}_\lambda$, the KL property of $\mathcal{F}_\lambda$ is satisfied for $\psi(t):=Mt^{1-q}$ with $M>0$ and $q\in[0,1)$.

{\bf(ii)} \textit{$f$ and $g$ are semialgebraic functions.} Indeed, similarly to (i), $e_\lambda g$ is a semialgebraic function. By Remark~\ref{semi algebraic}(v), the mapping $x\mapsto\nabla f(x)$ is also semialgebraic. Since $\norm{\cdot}^2$ is a polynomial function, Remark~\ref{semi algebraic}(i,ii) tells us again that $\mathcal{F}_\lambda$ is semialgebraic. Thus it is subanalytic by Remark\ref{semi algebraic}(iv). Therefore, the KL property of $\mathcal{F}_\lambda$ is satisfied for $\psi(t):=Mt^{1-q}$ with $M>0$ and $q\in[0,1)$.
\end{Remark}

\begin{Remark}[comparison between IPGM and iFB] \rm
Let us now compare the major issues in the algorithm description and the obtained results for IPGM given in Theorems~\ref{stationarity of pgm} and \ref{convergence rate ipgm} with that of the {\em inexact forward-backward} (iFB) algorithm in \cite[Theorem~8]{bonettini20} to solve problem \eqref{eq:composite} when the function $g$ is {\em convex}. The iFB algorithm from \cite{bonettini20} can be written as follows:

\begin{longfbox}
\begin{Algorithm}[inexact forward-backward (iFB)]\label{ifb}
\setcounter{Step}{-1} 
\begin{Step}\rm
Choose $\lambda\in(0,L^{-1})$, a starting point $x^1$ and a sequence of errors $\omega_k$ such that $\sum_{k=1}^\infty\sqrt{\omega_k}<\infty$. Set $k:=1$.
\end{Step}
\begin{Step}\rm
Compute $p^k$ such that 
\begin{align}
&\Phi_{\lambda,F_\lambda(x^k)}(p^k)\le \inf  \Phi_{\lambda,F_\lambda(x^k)}+ \omega_k \label{stopphi},\\
&\dotproduct{\nabla f(x^k),p^k-x^k} +(2\lambda)^{-1}\norm{p^k-x^k}^2+g(p^k)< g(x^k).\label{condi less}
\end{align}
\end{Step}
\begin{Step}\rm
Set $x^{k+1}:=p^k.$ Increase $k$ by $1$ and go back to Step~1.
\end{Step}
\end{Algorithm}
\end{longfbox}\vspace*{0.1in}

We comments on the following issues.\vspace*{0.03in}

{\bf(i)} \textit{Algorithm description}. The main difference between IPGM and iFB is the choice of {\em stopping criterion} for the subproblem of finding approximate solution $p^k$ to $T_\lambda(x^k)$ at each iteration. Although both algorithms use the condition \eqref{stopphi} for finding $p^k$, the selections of $\omega_k$ are different.  The error $\omega_k$ in iFB needs to satisfy the condition $\sum_{k=1}^\infty\sqrt{\omega_k}<\infty$. On the other hand, it follows from Algorithm \ref{pgm} and Theorem~\ref{stationarity of pgm} that IPGM considers two selections for $\omega_k$ which are either
\begin{align}\label{1st choice}
\omega_k:&=\min\set{\dfrac{\lambda\varepsilon_k^2}{2},\rho_k}\;\text{ with }\;\sum_{k=1}^\infty \rho_k<\infty,\;\text{ or }\\
\label{2nd choie}
\omega_k:&=\mathscr{C}\varepsilon_k^2\;\text{ with }\;\mathscr{C}:=\min\set{\dfrac{\lambda}{2},\dfrac{C_1^2}{4C_2^2},\dfrac{C_1}{4}}, 
\end{align}
where $C_1,C_2$  are defined in Proposition~\ref{prop dk goes 0}. Step~\ref{Step 2} of Algorithm~\ref{pgm} shows that $\varepsilon_k$ {\em doesn't need to decrease} after an iteration. This number is reduced only when $\norm{g^k}$ is sufficiently small, which holds when $x^k$ is near the exact solution of problem \eqref{eq:composite}. When $k\rightarrow\infty$, the convergence rate to $0$ of $\set{\rho_k}$ in the first choice \eqref{1st choice} is slower than that of iFB. In the second selection \eqref{2nd choie}, the error $\omega_k$ of the subproblem is independent with $k$. The approximate solution $p^k$ for the subproblem of iFB needs to satisfy condition \eqref{condi less}, which ensures the sufficient decrease property of the sequence $\set{\varphi(x^k)}$. This property is guaranteed for IPGM if the second choice of error \eqref{2nd choie} is employed; see
Remark~\ref{remark descent}. 

{\bf(ii)} \textit{Stationary accumulation points}. Theorem~8(i) of \cite{bonettini20} requires that the smooth part $f$ has a Lipschitz gradient, that the function $\varphi$ is bounded from below, and that the sequence $\set{x^k}$ is bounded. On the other hand, our Theorem~\ref{stationarity of pgm} requires that the smooth part satisfies only the $L$-descent condition, which is strictly weaker than the Lipschitz continuity of $\nabla f$. This allows us to work with broader classes of functions by replacing the usual Euclidean distance by the {\em Bregman distance}. The boundedness of $\set{x^k}$ and the boundedness from below of $\varphi$ are not required in Theorem~\ref{stationarity of pgm}. The latter property is relaxed to $\inf_{k\rightarrow\infty}\varphi(x^k)>-\infty$.

{\bf(iii)} \textit{Global convergence.} Theorem~8(ii) of \cite{bonettini20} requires that the smooth part $f$ is analytic and  that the nonsmooth part $g$ is subanalytic. On the other hand, the global convergence analysis in our Theorem~\ref{convergence rate ipgm} is based on the KL property of $\mathcal{F}_\lambda$. It follows from Remark~\ref{remark KL} that the latter property holds when $g$ is {\em semialgebraic} and $f$ is {\em either analytic}, or {\em semialgebraic}.

{\bf(iv)} \textit{Convergence rate.} The rate of convergence for $\set{x^k}$ is established in Theorem \ref{convergence rate ipgm} under the KL property of $\mathcal{F}_\lambda$, while not any type of convergence rate for iFB is given in \cite{bonettini20}. Instead, \cite[Theorem~9]{bonettini20} presents a convergence rate for function values in the variable metric inexact linesearch-based algorithm.
\end{Remark}\vspace*{-0.2in}

\section{Numerical Experiments}\label{sec:6}

In this section, we compare the numerical efficiency of our IPGM  with iFB in Algorithm~\ref{ifb} for a variant of the {\em nonconvex image restoration problem} taken from \cite{stp}. To proceed, given a vector $b \in \R^n$, matrices $A\in \R^{n\times n}$, and $B \in \R^{m\times n}$ together with a number $\gamma>0$, we consider the following problem:
\begin{equation}\label{ncvimage}
\text{minimize } \quad \varphi(x):= \sum_{i=1}^n \log(1+(Ax-b)_i^2) +\gamma \|Bx\|_1 \quad 
\text{subject to} \quad x \in \R^n,
\end{equation}
where $\norm{u}_1:=\sum_{i=1}^n|u_i|$ for any $u\in\R^n.$

It can be seen thatn\eqref{ncvimage} is a special case of \eqref{eq:composite} with $f(x):=\sum_{i=1}^n \log(1+(Ax-b)_i^2) $ and $g(x):=\gamma \|Bx\|_1$. Indeed, $g$ is clearly convex while $f$ is continuously differentiable with the gradient 
\begin{align*}
\nabla f(x)=2A^T u,\;\text{ where }\;u_i=\frac{(Ax-b)_i}{1+(Ax-b)_i^2},\;i=1,\ldots,n, \quad \text{for all }\; x \in \R^n.
\end{align*}
The smooth part $f$ is also considered in \cite[Section~4.1]{bonettini20} for an image deblurring and denoising problem. It follows from \cite[(4.2)]{bonettini20} that $\nabla f$ is Lipschitz continuous with constant $L=2\norm{A}_1\norm{A}_\infty$, where 
\begin{align*}
\norm{A}_1=\max_{1\le j\le n}\sum_{i=1}^n|a_{ij}|,\quad \norm{A}_\infty=\max_{1\le i\le n}\sum_{j=1}^n|a_{ij}|,\quad A=[a_{ij}]_{i,j=\overline{1,n}}.
\end{align*}
Moreover, $f$ is analytic it is expressed as finite sums and compositions of analytic functions, $g$ is semialgebraic and $\varphi$ is bounded from below by $0$ Therefore, the assumptions on the objective function in Theorem~\ref{convergence rate ipgm} and \cite[Theorem~8]{bonettini20} are satisfied, and so both IPGM and iFB are applicable to solve \eqref{ncvimage}. 

Problem \eqref{ncvimage} is studied in \cite{stp} when the matrix $B$ is square and orthogonal. In this section, we consider $A\in\R^{n\times n},\;b\in\R^n,\;B\in\R^{m\times n}$ generated randomly, with i.i.d. (identically and independent distributed) standard Gaussian entries, where $m,n$ need not to be equal. The number $\gamma$ is chosen as either $10^{-3}$, or $10^{-6}$. We provide two numerical experiments to illustrate how the difference in the stopping criteria for the subproblems of iFB and IPGM affect their performance. 

\begin{itemize}
\item In the first experiment, we let iFB run for 2000 iterations and record the function value obtained by this method. Then IPGM runs until its function value is lower than the recorded one of iFB.

\item In the second experiment, we let both algorithms run until $\norm{g^k}\le 10^{-1}$, where $g^k:=\dfrac{1}{\lambda}\norm{x^k-p^k}$. We also stop the algorithms if they either reach the maximum number of iterations of $2,000,000$, or the time limit of $4000$ seconds.
\end{itemize}
More detailed information for the testing data used in the experiments is presented in Table~\ref{testing data}, where "TN" stands for test number.

\begin{center}
\begin{table}[H]
\centering
\begin{tabular}{|llll|llll|} 
\hline
\multicolumn{4}{|c|}{Experiment 1}  & \multicolumn{4}{c|}{Experiment 2}\\ 
\hline
\multicolumn{1}{|c}{TN} & \multicolumn{1}{c}{m} & \multicolumn{1}{c}{n} & \multicolumn{1}{c|}{$\gamma$} & \multicolumn{1}{c}{TN} & \multicolumn{1}{c}{m} & \multicolumn{1}{c}{n} & \multicolumn{1}{c|}{$\gamma$}  \\ 
\hline
1 & 200& 200& $10^{-3}$    & 1& 200& 200& $10^{-3}$     \\
2 & 400& 400& $10^{-3}$    & 2& 400& 400& $10^{-3}$     \\
3 & 800& 800& $10^{-3}$    & 3& 800& 800& $10^{-3}$     \\
4 & 1600  & 1600  & $10^{-3}$    & 4& 1600  & 1600  & $10^{-3}$     \\
5 & 200& 800& $10^{-3}$    & 5& 200& 800& $10^{-3}$     \\
6 & 400& 1600  & $10^{-3}$    & 6& 400& 1600  & $10^{-3}$     \\
7 & 800& 200& $10^{-3}$    & 7& 800& 200& $10^{-3}$     \\
8 & 1600  & 400& $10^{-3}$    & 8& 1600  & 400& $10^{-3}$     \\
9 & 200& 200& $10^{-6}$    & 9& 200& 200& $10^{-6}$     \\
10& 400& 400& $10^{-6}$    & 10& 400& 400& $10^{-6}$     \\
11& 800& 800& $10^{-6}$    & 11& 800& 800& $10^{-6}$     \\
12& 1600  & 1600  & $10^{-6}$    & 12& 1600  & 1600  & $10^{-6}$     \\
13& 200& 800& $10^{-6}$    & 13& 200& 800& $10^{-6}$     \\
14& 400& 1600  & $10^{-6}$    & 14& 400& 1600  & $10^{-6}$     \\
15& 800& 200& $10^{-6}$    & 15& 800& 200& $10^{-6}$     \\
16& 1600  & 400& $10^{-6}$    & 16& 1600  & 400& $10^{-6}$     \\
\hline
\end{tabular}
\caption{Testing data}
\label{testing data}
\end{table}
\end{center} \vspace*{-0.2in}
Now we describe the setting for both algorithms in the tests. Firstly, the proximal parameter is chosen as $\lambda=1/2L$ to ensure that $\lambda\in(0,L^{-1})$ for both algorithms. The starting point is chosen as $x^1:=0_{\R^n}$. At any iteration of each algorithm, the step of finding an approximation proximal point $p^k$ satisfying \eqref{stopphi} is equivalent to solving approximately the optimization problem 
\begin{equation}\label{primalncv}
\min \quad \Phi_{\lambda, F_\lambda(x^k)} (p):=(2\lambda)^{-1}\norm{p-F_\lambda (x^k)}^2 +\gamma\norm{Bx}_1.
\end{equation}The dual problem of \eqref{primalncv} is given as
\begin{align}\label{dual problem num}
\max \quad \Psi_{\lambda,k}(y):&=-\dfrac{\lambda}{2}\norm{B^*y}^2+\dotproduct{BF_\lambda(x^k),y}-\delta_{\gamma\mathbb{B}^\infty}(y),
\end{align}
where $\mathbb{B}^\infty:=\set{y\in\R^m\;|\;\max_{i=1,\ldots,m}|y_i|\le 1}$. In our tests, the dual sequence $\{y^l\}$ to solve problem \eqref{dual problem num} is generated by means of the  Fast Iterative Shrinkage-Thresholding Algorithm (FISTA) from \cite{beck09} and the primal sequence in it is chosen as $p^l:= -\lambda B^*y^l+F_\lambda(x^k)$. The following main stopping criterion is used in both IPGM and iFB:
\begin{align}\label{stop by dual gap}
\Phi_{\lambda,F_\lambda(x^k)}(p^l) - \Psi_{\lambda,k}(y^l) \leq \omega_k,
\end{align}
where the choice of $\omega_k$ depends on the method. Namely, the settings for IPGM and iFB are as follows:

\begin{itemize}
\item IPGM: $\omega_k= \mathscr{C}\varepsilon_k^2$, where $\mathscr{C}:=\min\set{\dfrac{\lambda(1-\lambda\varrho)}{2},\dfrac{C_1^2}{4C_2^2},\dfrac{C_1}{4}}$ with $C_1,C_2$ defined in Proposition~\ref{prop dk goes 0}.  Since $g$ is convex, we get $\varrho=0,$ and thus
\begin{align*}
C_1=\lambda(1-\lambda L)=\dfrac{\lambda}{2} \text{ and } C_2=2(\sqrt{2\lambda}+\sqrt{2\lambda})=4\sqrt{2\lambda}.
\end{align*}
Therefore, $\mathscr{C}=\dfrac{\lambda}{512}$. The initial radii $\varepsilon_1$ and $r_1$ and the radius reduction factors $\mu,\theta$ are chosen as
\begin{align*}
\varepsilon_1=r_1=\sqrt{\dfrac{100}{\mathscr{C}}}\;\text{ and }\;\mu=\theta =\dfrac{1}{2}.
\end{align*} 
\item iFB: $\omega_k=\dfrac{1}{k^4}.$ This is a typical choice for $\omega_k$ to ensure that $\sum_{k=1}^\infty \sqrt{\omega_k}<\infty.$ 
\end{itemize}

All numerical experiments are conducted on a computer with 10th Gen Intel(R) Core(TM) i5-10400 (6-Core 12M Cache, 2.9GHz to 4.3GHz) and 16GB RAM memory. The codes are written in MATLAB R2021a. The result for the tests is presented in the following tables, where \textit{``iter"} is the number of iterations, and where \textit{``fval"} and \textit{``error"} are the values of the objective function $\varphi$ and the error $\omega_k$ required for the subproblem at the last iteration, respectively.
\begin{center}
\begin{table}[H]
\centering
\small
\begin{tabular}{|l|llllll|lllll|} 
\hline
  & \multicolumn{6}{c|}{IPGM}& \multicolumn{5}{c|}{iFB}  \\ 
\hline
\multicolumn{1}{|c|}{TN} & \multicolumn{1}{c}{iter} & \multicolumn{1}{c}{fval} & \multicolumn{1}{c}{$\norm{g^k}$} & \multicolumn{1}{c}{error} & \multicolumn{1}{c}{$\varepsilon_k$} & \multicolumn{1}{c|}{time} & \multicolumn{1}{c}{iter} & \multicolumn{1}{c}{fval} & \multicolumn{1}{c}{$\norm{g^k}$} & \multicolumn{1}{c}{error} & \multicolumn{1}{c|}{time}  \\ 
\hline
1 & 2014    & 28.59   & 3.2E+01 & 1.5E-06  & 1.0E+01& 3.75     & 2000    & 28.60   & 3.2E+01 & 6.3E-14  & 3.88      \\
2 & 2013    & 78.09   & 9.2E+01 & 6.0E-06  & 3.9E+01& 11.36    & 2000    & 78.10   & 9.2E+01 & 6.3E-14  & 11.46     \\
3 & 2013    & 211.12  & 3.0E+02 & 6.0E-06  & 7.6E+01& 38.47    & 2000    & 211.17  & 3.0E+02 & 6.3E-14  & 42.98     \\
4 & 2012    & 586.02  & 7.8E+02 & 2.4E-05  & 3.0E+02& 340.72   & 2000    & 586.10  & 7.8E+02 & 6.3E-14  & 342\\
5 & 2013    & 237.33  & 2.9E+02 & 6.0E-06  & 7.6E+01& 7.96     & 2000    & 237.37  & 2.9E+02 & 6.3E-14  & 8.18      \\
6 & 2012    & 566.36  & 7.6E+02 & 2.4E-05  & 3.0E+02& 30.3     & 2000    & 566.44  & 7.6E+02 & 6.3E-14  & 31.49     \\
7 & 2014    & 38.91   & 3.3E+01 & 1.5E-06  & 1.0E+01& 24.59    & 2000    & 38.92   & 3.3E+01 & 6.3E-14  & 20.76     \\
8 & 2013    & 97.56   & 9.6E+01 & 6.0E-06  & 3.9E+01& 281.07   & 2000    & 97.58   & 9.6E+01 & 6.3E-14  & 307.96    \\
9 & 2014    & 28.43   & 3.2E+01 & 1.5E-06  & 1.0E+01& 3.8      & 2000    & 28.44   & 3.2E+01 & 6.3E-14  & 4.97      \\
10& 2013    & 77.82   & 9.2E+01 & 6.0E-06  & 3.9E+01& 9.21     & 2000    & 77.84   & 9.2E+01 & 6.3E-14  & 9.18      \\
11& 2013    & 210.71  & 3.0E+02 & 6.0E-06  & 7.6E+01& 36.15    & 2000    & 210.76  & 3.0E+02 & 6.3E-14  & 36.41     \\
12& 2012    & 585.48  & 7.8E+02 & 2.4E-05  & 3.0E+02& 392.7    & 2000    & 585.56  & 7.8E+02 & 6.3E-14  & 388.58    \\
13& 2013    & 237.24  & 2.9E+02 & 6.0E-06  & 7.6E+01& 8.14     & 2000    & 237.28  & 2.9E+02 & 6.3E-14  & 8.06      \\
14& 2012    & 566.23  & 7.6E+02 & 2.4E-05  & 3.0E+02& 30.38    & 2000    & 566.31  & 7.6E+02 & 6.3E-14  & 32.21     \\
15& 2013    & 38.24   & 3.3E+01 & 1.5E-06  & 1.0E+01& 20.1     & 2000    & 38.24   & 3.3E+01 & 6.3E-14  & 20.06     \\
16& 2013    & 96.56   & 9.7E+01 & 6.0E-06  & 3.9E+01& 306.7    & 2000    & 96.58   & 9.7E+01 & 6.3E-14  & 305.78    \\
\hline
\end{tabular}
\caption{Result of the first experiment }
\label{result 1}
\end{table}

\begin{table}[H]
\centering
\small
\begin{tabular}{|l|llllll|lllll|} 
\hline
& \multicolumn{6}{c|}{IPGM}   & \multicolumn{5}{c|}{iFB}\\ 
\hline
\multicolumn{1}{|c|}{TN} & \multicolumn{1}{c}{iter} & \multicolumn{1}{c}{fval} & \multicolumn{1}{c}{$\norm{g^k}$} & \multicolumn{1}{c}{error} & \multicolumn{1}{c}{$\varepsilon_k$} & \multicolumn{1}{c|}{time} & \multicolumn{1}{c}{iter} & \multicolumn{1}{c}{fval} & \multicolumn{1}{c}{$\norm{g^k}$} & \multicolumn{1}{c}{error} & \multicolumn{1}{c|}{time}  \\ 
\hline
1     & 786439& 0.92  & 1.0E-01 & 2.3E-11& 4.0E-02& 1953.72& 10530 & 10.32 & 8.8E+00 & 8.1E-17& 4000    \\
2     & 786866& 2.95  & 6.3E-01 & 3.6E-10& 3.1E-01& 4000   & 9827  & 36.76 & 3.1E+01 & 1.1E-16& 4000    \\
3     & 193820& 24.17 & 1.2E+01 & 2.3E-08& 4.8E+00& 4000   & 7785  & 114.19& 1.1E+02 & 2.7E-16& 4000    \\
4     & 23461 & 227.81& 1.8E+02 & 1.5E-06& 7.5E+01& 4000   & 8200  & 361.19& 3.6E+02 & 2.2E-16& 4000    \\
5     & 921886& 13.67 & 3.0E+00 & 1.5E-09& 1.2E+00& 4000   & 12124 & 119.70& 8.3E+01 & 4.6E-17& 4000    \\
6     & 251253& 73.71 & 3.0E+01 & 2.3E-08& 9.4E+00& 4000   & 10507 & 307.73& 3.1E+02 & 8.2E-17& 4000    \\ CT
7     & 381454& 4.96  & 4.0E-01 & 3.6E-10& 1.6E-01& 4000   & 7835  & 20.81 & 1.5E+01 & 2.7E-16& 4000    \\
8     & 45168 & 26.97 & 1.1E+01 & 9.3E-08& 4.9E+00& 4000   & 6908  & 58.10 & 4.4E+01 & 4.4E-16& 4000    \\
9     & 1127533 & 0.28  & 1.0E-01 & 2.3E-11& 4.0E-02& 2515.67& 50316 & 2.45  & 3.0E+00 & 1.6E-19& 4000    \\
10    & 690078& 1.73  & 7.5E-01 & 3.6E-10& 3.1E-01& 4000   & 43018 & 15.86 & 1.0E+01 & 2.9E-19& 4000    \\
11    & 184279& 23.11 & 1.3E+01 & 2.3E-08& 4.8E+00& 4000   & 39672 & 52.81 & 3.4E+01 & 4.0E-19& 4000    \\
12    & 21417 & 235.58& 1.9E+02 & 1.5E-06& 7.5E+01& 4000   & 23285 & 226.89& 1.8E+02 & 3.4E-18& 4000    \\
13    & 922893& 12.94 & 3.0E+00 & 1.5E-09& 1.2E+00& 4000   & 52519 & 66.71 & 3.3E+01 & 1.3E-19& 4000    \\
14    & 253962& 72.51 & 3.0E+01 & 2.3E-08& 9.4E+00& 4000   & 52220 & 145.44& 9.6E+01 & 1.3E-19& 4000    \\
15    & 389506& 1.82  & 5.5E-01 & 3.6E-10& 1.6E-01& 4000   & 35987 & 8.76  & 3.6E+00 & 6.0E-19& 4000    \\
16    & 38703 & 25.86 & 1.3E+01 & 9.3E-08& 4.9E+00& 4000   & 30827 & 28.76 & 1.4E+01 & 1.1E-18& 4000    \\
\hline
\end{tabular}
\caption{Result of the second experiment}
\label{result 2}
\end{table}
\end{center}\vspace*{-0.2in}
The results in the tests can be summarized as follows:

$\bullet$ Table~\ref{result 1} shows that the performances of the two algorithms are typically the same when the number of iterations is small.

$\bullet$ Table~\ref{result 2} shows that when the large number of iterations is required, IPGM has advantages on the errors control. It can be seen from this table that the error required for subproblems of IPGM is larger than that of iFB. Therefore, IPGM requires less time to solve subproblems than iFB, which makes it faster. Eventually, in all tests in Table~\ref{result 2} except Test~12 and Test~16, iFB stagnates at the number of iteration much smaller than IPGM, and thus IPGM attains smaller objective function than iFB.\vspace*{-0.1in}

\section{ Conclusions and Further Research}\label{sec:concl}

In this paper, we propose and develop inexact proximal point and inexact proximal gradient methods to solve problems of weakly convex optimization and nonconvex composite optimization. These methods achieve stationary accumulation points and, under additional assumptions on the KL property of the envelopes, the global linear convergence. The convergence analysis of the developed algorithms are based on the general framework of finding zeros of continuous mappings when only inexact information for such mappings is accessible. A new approximate subdifferential for weakly convex functions is employed to simplify the convergence analysis of the aforementioned methods.

Our future research includes detailed and efficient numerical analysis and experiments to solve subproblems of the proposed methods. We also intend to find conditions ensuring the convergence to local/global minimizers of our inexact methods. The obtained results would allow us to develop new applications in important classes of models in machine learning, statistic, and related disciplines.

\end{document}